\newtheorem{twr}{Theorem}[section]
\newtheorem{lem}[twr]{Lemma}
\theoremstyle{definition}
\newtheorem{example}[twr]{Example}
\newtheorem{obs}[twr]{Observation}
\theoremstyle{remark}
\newtheorem{remark}[twr]{Remark}
\numberwithin{equation}{section}
\DeclareMathOperator{\conv}{conv}
\DeclareMathOperator{\absconv}{absconv}
\DeclareMathOperator{\lin}{lin}
\DeclareMathOperator{\tr}{Tr}
\DeclareMathOperator{\ext}{ext}
\DeclareMathOperator{\re}{Re}
\DeclareMathOperator{\im}{Im}
\begin{document}

\setcounter{page}{1}

\title[$2$-strong uniqueness of a best approximation and minimal projections]{$2$-strong uniqueness of a best approximation and of minimal projections in complex polytope norms and their duals}

\author[T. Kobos \MakeLowercase{and} G. Lewicki]
{Tomasz Kobos$^1$ \MakeLowercase{and} Grzegorz Lewicki$^2$}
\address{$^1$Faculty of Mathematics and Computer Science, Jagiellonian University, \L ojasiewicza 6, 30-348 Krak\'ow, Poland}
\email{tomasz.kobos@uj.edu.pl}
\address{$^2$Faculty of Mathematics and Computer Science, Jagiellonian University, \L ojasiewicza 6, 30-348 Krak\'ow, Poland}
\email{grzegorz.lewicki@uj.edu.pl}

\subjclass[2010]{47A58, 41A65, 52A21.}
\keywords{}
\begin{abstract}
We study a property of $2$-strong uniqueness of a best approximation in a class of finite-dimensional complex normed spaces, for which the unit ball is an absolutely convex hull of finite number of points and in its dual class. We prove that, contrary to the real case, these two classes do not coincide but are in fact disjoint. We provide several examples of situations in these two classes, where a uniqueness of an element of a best approximation in a given linear subspace implies its $2$-strong uniqueness. In particular, such a property holds for approximation in an arbitrary subspace of the complex $\ell_1^n$ space, but not of the complex $\ell_{\infty}^n$ space. However, this is true in general under an additional assumption that a subspace has a real basis and an ambient complex normed space is generated by real vectors or functionals. We apply our results and related methods to establish some results concerned with $2$-strongly unique minimal projections in complex normed spaces, proving among other things, that a minimal projection onto a two-dimensional subspace of an arbitrary three-dimensional complex normed space is $2$-strongly unique, if its norm is greater than $1$.
\end{abstract} \maketitle

\section{Introduction}
Let $X$ be a normed space over $\mathbb{K}$ (where $\mathbb{K}=\mathbb{R}$ or $\mathbb{K}=\mathbb{C}$) and let $Y \subseteq X$ be its linear subspace. We say that $y_0 \in Y$ is a \emph{best approximation} in $Y$ for a vector $x \in X \setminus Y$ if 
$$\|x-y\| \geq \|x-y_0\|$$
for every $y \in Y$. Finding a best approximation of a given element and determining whether it is unique or not, is a fundamental problem of the approximation theory. Even when the best approximation is determined to be unique, it is often highly desirable (also from the numerical point of view) to establish some additional properties of an element of a best approximation. A widely known example of such a property is the strong uniqueness. We say that $y_0 \in Y$ is a \emph{strongly unique best approximation} for $x \in X \setminus Y$ if there exists a constant $r>0$ such that
$$\|x-y\| \geq \|x-y_0\| + r\|y-y_0\|$$
for every $y \in Y$. A strong uniqueness of $y_0$ provides much more information than just a uniqueness, as it gives an explicit lower bound on the distance of $x$ to an arbitrary element of $y$, quantifying how $y$ is "non-optimal" for $x$. It is not hard to see, that a strong uniqueness of a best approximation for $x$ yields a local Lipschitz continuity at $x$ of the metric projection onto $Y$. At the same time, it is known, that in a completely general case, the metric projection can even be discontinuous. This can happen in the infinite-dimensional setting, even when $Y$ is assumed to be a \emph{Chebyshev} subspace (i.e. every vector $x \in X$ has a unique best approximation in $Y$, so that a metric projection is a single valued mapping).

The property of a strong uniqueness of a best approximation originates from the work of Newman and Shapiro from $1963$ \cite{newmanshapiro}, who proved that if $Y$ is a finite-dimensional Haar subspace of the space of real continuous functions $C(K)$ (where $K$ is a compact Hausdorff space), then every function $f \in C(K)$ has a strongly unique best approximation in $C(K)$. This was a significant improvement of a much older and famous result of Young from $1907$ \cite{young} who proved that that every finite-dimensional Haar subspace of $C(K)$ is a Chebyshev subspace (the converse statement was established later by Haar \cite{haar}). After that, the property of strong uniqueness and its relatives were widely studied by many authors in a variety of different contexts and its importance is well established. Here we shall provide only some necessary background, but for a thorough introduction to the topic of strong uniqueness we refer the reader to a survey paper \cite{survey}, which contains a great overview of the existing results, along with some applications of this notion. In this survey paper one can find references to more than one hundred papers related to this concept.

In the following paper we will be interested in a more general notion of an $\alpha$-strong uniqueness. If $X$ is a normed space, $Y \subseteq X$ is its linear subspace and $x \in X \setminus Y$, then $y_0$ is called an \emph{$\alpha$-strongly} unique best approximation for $x$ in $Y$ (where $\alpha \geq 1$ is a real parameter), if there exists a constant $r>0$ such that
$$\|x-y\|^{\alpha} \geq \|x-y_0\|^{\alpha} + r\|y-y_0\|^{\alpha}.$$
for every $y \in Y$. Thus, $1$-strong uniqueness coincides with the strong uniqueness defined before. It is not hard to see that $\alpha$-strong uniqueness implies $\alpha'$-strong uniqueness for any $\alpha' \geq \alpha$ with the same constant $r$. In other words, for smaller $\alpha$ we get a stronger property. Similarly like $1$-strong uniqueness, the property of $\alpha$-strong uniqueness implies a local H\"older continuity of the metric projection onto $Y$ at $x$.

The $1$-strong uniqueness is a property that is most easily found in spaces like $C(K)$ or $\ell_1$, where the set of extreme points of the unit ball is, roughly speaking, not too large. For example, in the classical Euclidean space, every closed subspace is Chebyshev, but there is no strong $1$-uniqueness. In this case, it is easy to prove that the best approximation in a linear subspace is always $2$-strongly unique, but never $\alpha$-strongly unique for $\alpha<2$.

In this paper we shall be concerned with the finite-dimensional setting. In the real finite-dimensional case, the norms with a finite set of extremal points of the unit ball form a broad class of norm with a prevalence of strong uniqueness. Such norms in $\mathbb{R}^n$ will be called \emph{real polytope norms} and most obvious examples include spaces $\ell_1^n$ or $\ell_{\infty}^n$, but any $0$-symmetric convex polytope in $\mathbb{R}^n$ corresponds to such a norm (and thus such norms are dense in $\mathbb{R}^n$). A remarkable property of real polytopes norms is given in the following result of Sudolski and Wójcik \cite{sudolski}.

\begin{twr}
\label{twrsudolskiwojcik}
Let $X = (\mathbb{R}^n, \| \cdot \|)$ be a real normed space with a real polytope norm. If $Y \subseteq X$ is a linear subspace and $x \in X \setminus Y$ is a vector with a unique best approximation $y_0 \in Y$, then $y_0$ is a $1$-strongly unique best approximation for $x$.
\end{twr}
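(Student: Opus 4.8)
The plan is to exploit the piecewise-linear structure of the norm and to read off linear growth from the behaviour of the distance function at its minimizer. First I would normalize by translating: replacing $x$ with $x - y_0$, we may assume $y_0 = 0$, so the hypothesis becomes that $0$ is the unique best approximation of $x$ in $Y$, i.e. $\|x - y\| > \|x\|$ for every $y \in Y \setminus \{0\}$, and the goal is to produce $r > 0$ with $\|x - y\| \geq \|x\| + r\|y\|$ for all $y \in Y$. Next, since the norm is a real polytope norm, its dual unit ball is again a (symmetric) polytope with finitely many vertices $g_1, \dots, g_m$, so that $\|z\| = \max_{1 \le i \le m} \langle g_i, z\rangle$ for every $z$. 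Consequently the distance function $\phi(y) := \|x - y\| = \max_i(\langle g_i, x\rangle - \langle g_i, y\rangle)$ is a convex, piecewise-linear function on $Y$ whose unique minimizer is $0$.

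The heart of the argument is to convert this unique minimizer into genuine linear growth. Writing $a_i = \langle g_i, x\rangle$ and letting $I_0 = \{i : a_i = \|x\|\}$ be the active set at $0$, a direct computation gives the one-sided directional derivative $\phi'(0; v) = \max_{i \in I_0}(-\langle g_i, v\rangle)$, which is a continuous, positively homogeneous convex function on $Y$. I would then establish the key dichotomy: for a piecewise-linear convex $\phi$, the minimizer $0$ is unique if and only if $\phi'(0; v) > 0$ for all $v \in Y \setminus \{0\}$. One implication is immediate from the convexity inequality $\phi(v) \geq \phi(0) + \phi'(0; v)$; for the converse, if $\phi'(0;v) = 0$ for some $v \neq 0$, then the piecewise-linear convex function $t \mapsto \phi(tv)$ has vanishing right derivative at $0$ and is therefore constant on an initial segment, producing a whole segment of minimizers and contradicting uniqueness.

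With strict positivity of $\phi'(0; \cdot)$ in hand, I would restrict to the unit sphere $S = \{v \in Y : \|v\| = 1\}$, which is compact since $Y$ is finite-dimensional. By continuity $r := \min_{v \in S}\phi'(0;v) > 0$, and positive homogeneity upgrades this to $\phi'(0; v) \geq r\|v\|$ for all $v \in Y$. Combining this with the convexity inequality yields $\phi(v) = \|x - v\| \geq \|x\| + r\|v\|$ for every $v \in Y$, which is exactly $1$-strong uniqueness after translating back to $y_0$.

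The step I expect to carry the real weight is the dichotomy characterizing uniqueness through the directional derivative, specifically the direction asserting that a unique minimizer forces $\phi'(0; \cdot)$ to be strictly positive. This is precisely where the polytope hypothesis is indispensable: for a general norm a unique minimizer is perfectly compatible with a vanishing directional derivative (as in the Euclidean case, where the distance function behaves quadratically and only $2$-strong uniqueness holds), and it is only the piecewise-linearity coming from finitely many vertices $g_i$ that converts a unique minimizer into genuine linear growth.
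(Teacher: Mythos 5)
Your proof is correct, but note that the paper itself offers no proof to compare against: Theorem \ref{twrsudolskiwojcik} is quoted as a known result of Sudolski and W\'ojcik \cite{sudolski} and is used later (in Lemma \ref{lem1} and Theorem \ref{twrpolytope}) purely as a black box. So your argument stands as a self-contained substitute, and it is sound at every step. The reduction to $y_0=0$, the passage to the dual polytope so that $\phi(y)=\|x-y\|=\max_i\left(\langle g_i,x\rangle-\langle g_i,y\rangle\right)$, and the Danskin-type formula $\phi'(0;v)=\max_{i\in I_0}(-\langle g_i,v\rangle)$ are all correct --- indeed, for $t>0$ small the inactive indices are uniformly dominated, so $\phi(tv)=\|x\|+t\max_{i\in I_0}(-\langle g_i,v\rangle)$ exactly, which simultaneously proves the derivative formula and your dichotomy: since $0$ is a minimizer one has $\phi'(0;v)\geq 0$ automatically, and if $\phi'(0;v)=0$ for some $v\neq 0$ then $\phi(tv)=\phi(0)$ on an initial segment (a max of finitely many affine functions of $t$ is affine near $0$ with slope equal to the right derivative), contradicting uniqueness. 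The compactness-plus-homogeneity upgrade to $\phi(v)\geq\|x\|+r\|v\|$ is standard and correctly executed. You also correctly identify where the polytope hypothesis is indispensable --- it is exactly the finiteness of the set of active functionals that converts a unique minimizer into linear growth, in contrast with the Euclidean situation discussed in the paper's introduction where only $2$-strong uniqueness holds. Two cosmetic points: you should say explicitly that the symmetry of the dual vertex set $\{g_1,\dots,g_m\}$ lets you write $\|z\|=\max_i\langle g_i,z\rangle$ without absolute values, and that $\phi'(0;\cdot)$ is continuous on $Y$ because it is a finite maximum of linear functionals; both are immediate.
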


Therefore, in real polytope norms a uniqueness of best approximation automatically implies a $1$-strong uniqueness (although in these norms there can be no uniqueness of best approximation at all). It seems however, that there is no straightforward way to transfer this result to the complex setting, where the situation turns out to be vastly different.

Our main goal is to develop some properties similar to that in Theorem \ref{twrsudolskiwojcik} in a broad class of finite-dimensional complex normed spaces and provide some applications of these results to norm-minimal projections. It must be noted that problems we consider are profoundly different in the complex setting. To give some informal explanation of this fact, let us note that the modulus of a complex number $|a+bi|=\sqrt{a^2+b^2}$ is a Euclidean norm of a vector $(a, b) \in \mathbb{R}^2$ and for this reason, complex norms often behave akin to the Euclidean norm, where there is a $2$-strong uniqueness, but no an $\alpha$-strong uniqueness for any $\alpha<2$. It is therefore not surprising that a $1$-strong uniqueness is rarely found in complex normed spaces and a $2$-strong uniqueness often plays it it role instead. This is illustrated by a complex version of the previously mentioned result of Newman and Shapiro \cite{newmanshapiro}, which still holds, but with a $1$-strong uniqueness replaced by $2$-strong uniqueness. This was proved already in the paper of Newman and Shapiro, but for reference see also Section $10$ in \cite{survey} or \cite{smarzewski}. It should be noted however, that some non-trivial examples of a $1$-strong uniqueness occurring in complex spaces like $C(K)$ or $L_1$ do exist (see for instance Theorem $10.6$ in \cite{survey} and examples given afterwards).

When trying to transfer Theorem \ref{twrsudolskiwojcik} to the complex case it is therefore natural to replace a $1$-strong uniqueness with a $2$-strong uniqueness, that is generally more suitable for the complex scalars. However, there is also another important point of contrast with the real setting that should be taken into consideration. It is a basic fact of convex geometry that polytope norms in $\mathbb{R}^n$ form a self-dual class. In other words, every polytope norm $\| \cdot \|$ in $\mathbb{R}^n$ can be described for $x \in \mathbb{R}^n$ as
$$\|x\| = \max_{1 \leq j \leq N} |f_j(x)|$$
for some functionals $f_1, \ldots, f_N: \mathbb{R}^N \to \mathbb{R}$ (i.e. the unit ball of the dual norm is equal to $\conv \{\pm f_1, \ldots, \pm f_N \}$). On the other hand, the situation in the complex case is completely different. Adhering to a terminology from \cite{complexpolytope} we shall call a norm $\| \cdot \|$ in $\mathbb{C}^n$ a \emph{complex polytope norm} if there are vectors $u_1, \ldots, u_N \in \mathbb{C}^n$ such that for the unit ball $B$ of $\| \cdot \|$ we have
$$B = \absconv \{u_1, \ldots, u_N\}$$
$$=\left \{ \alpha_1u_1 + \ldots +  \alpha_Nu_N: \ \alpha_1, \ldots, \alpha_N \in \mathbb{C}, \ |\alpha_1| + \ldots + |\alpha_N| \leq 1 \right \}.$$
A minimal set of such vectors $u_j$'s will be called an \emph{essential system of vertices} (it is defined uniquely up to permutation and multiplication by scalars of modulus $1$). A general set of the form $\absconv\{u_1, \ldots, u_N\}$ will be called a \emph{balanced complex polytope}. The dual norm to a complex polytope norm will be called an \emph{adjoint complex polytope norm}. If $\| \cdot \|$ is an adjoint complex polytope norm, then for every $x \in \mathbb{C}^n$ we have
$$\|x\| = \max_{1 \leq j \leq N} |f_j(x)|$$
for some functionals $f_1, \ldots, f_N: \mathbb{C}^N \to \mathbb{C}$. A minimal set of such functionals will be called an \emph{essential system of facets} of norm $\| \cdot \|$. Clearly, in this case, the dual norm is a complex polytope norm with an essential system of vertices $f_1, \ldots, f_N$, when functionals $f_j$ are treated as vectors in $\mathbb{C}^n$. 

A basic example of a complex polytope norm in $\mathbb{C}^n$ is the $\ell_1$ norm
$$\|x\|_1 = |x_1| + \ldots + |x_n|,$$
with an essential system of vertices being the vectors from the canonical unit basis. A basic example of an adjoint complex polytope norm in $\mathbb{C}^n$ is the $\ell_{\infty}$ norm
$$\|x\|_{\infty} = \max_{1 \leq j \leq n} |x_j|,$$
with an essential system of facets again corresponding to the canonical unit basis. If $X = (\mathbb{C}^n, \| \cdot \|)$ is equipped with a complex polytope (adjoint complex polytope) norm, then we will call $X$ a complex polytope (adjoint complex polytope) space.

Contrary to the real case, the class of complex polytope norms does not coincide with the class of adjoint complex polytope norms, but they are actually disjoint, so that a norm can not have both properties at the same time (with an exception of dimension $1$). This fact was explicitly stated in \cite{complexpolytope}, although without a proof. We fill out this gap, by providing one in Section \ref{sectionnorms} (see Theorem \ref{twrnorms}). Expectedly, both classes are dense in the class of all norms in $\mathbb{C}^n$ (see Theorems $5.7$ and $5.8$ in \cite{complexpolytope}). For a further background on complex polytope norms and complex adjoint polytope norms, the reader is referred to \cite{complexpolytope}, where properties of both classes are systematically developed, with some examples of contrast with the real setting being highlighted. A thorough study of the two-dimensional balanced complex polytopes can be found in \cite{complexpolytope2dim}.

From the viewpoint of approximation theoretic properties investigated in this paper, it seems that there is no general reason to believe that the properties of complex polytope and adjoint complex polytope norms should be somehow the same. Therefore, it should not be expected that the same properties should hold for both class and it is clear from the practical point of view, that these two settings usually require different approaches. One crucial difference between these two classes is the fact that for an adjoint complex polytope norm we have a simple formula for computing the norm
$$\|x\| = \max_{1 \leq j \leq N} |f_j(x)|,$$
which is very useful when working with some problems involving the distances. However, if $\| \cdot \|$ is a comlex polytope norm with an essential system of vertices $u_1, \ldots, u_N$ then $\| \cdot \|$ can be determined as
$$\|x\| = \min\{|a_1| + \ldots + |a_N|: \  a_1, \ldots, a_N \in \mathbb{C} \ \text{ and } \  x = a_1u_1 + \ldots a_Nu_N\}.$$
It is rather clear that this formula is much less convenient than for an adjoint complex polytope norm -- when $N$ is large, a vector $x \in \mathbb{C}^n$ has many representations as a linear combination of $u_1, \ldots, u_N$ and it is a highly non-trivial problem in general to determine the representation achieving the minimum (even in the two-dimensional case, as demonstrated in \cite{complexpolytope2dim}). Such a difficulty obviously does not exist in the case of $N=n$. In this situation the norm is easily seen to be isometric to the $\ell_1$ norm and the representation is unique, yielding a simple formula.

The paper is organized as follows. In Section \ref{sectionnorms} we give a proof of the fact that for $n \geq 2$ a complex polytope norm in $\mathbb{C}^n$ is never an adjoint complex polytope norm (Theorem \ref{twrnorms}). In Section \ref{sectionproperties} we provide some basic properties of an element best approximation in complex polytope norms and the adjoint norms (see Lemmas \ref{charadjoint} and \ref{lemchar}). It turns out that a property like in Theorem \ref{twrsudolskiwojcik} holds with $2$-strong uniqueness for $1$-dimensional subspaces of every adjoint complex polytope space (Observation \ref{obs1dim}). However, it fails already for dimension $2$ and a counterexample can be found in the space $\ell_{\infty}^4$ (Example \ref{counterexample}). In the setting of complex polytope norms the situation is much less clear, but certainly a counterexample to such property can not be found in the space $\ell_1^n$, in which a uniqueness automatically implies a $2$-strong uniqueness (Theorem \ref{twrl1}). In Section \ref{sectionreal} we continue this theme, presenting how to establish a property like in Theorem \ref{twrsudolskiwojcik} for complex polytope norms and the adjoint norms, but under some additional assumptions. When a complex polytope norm has an essential system of vertices consisting of real vectors or when an adjoint complex polytope norm has an essential system of facets corresponding to real functionals, then uniqueness automatically implies $2$-strong uniqueness for subspaces generated by real vectors. This is proved in Theorem \ref{twrpolytope}, which also provides a notable difference between complex polytope and adjoint complex polytope settings. Namely, the order of $2$ of $2$-strong uniqueness can never be improved for adjoint complex polytope norms, but in the complex space $\ell_1^n$ it can be improved to $1$-strong uniqueness (Theorem \ref{twrl12}).

In Section \ref{sectionproj} we apply results of the previous sections and other methods originating from approximation theory, to establish some new results concerned with a $2$-strong uniqueness of norm-minimal projections in normed spaces. Problems related to minimal projections and projection constants of normed spaces are classical problems of the geometry of Banach spaces and they were studied extensively for many years, often bringing significant challenges. Let us recall that if $X$ is normed space and $Y \subseteq X$ is its linear subspace, then the \emph{relative projection constant} of $Y$ is defined as $\lambda(Y, X) = \inf  \|P\|$, where the infimum is taken over all linear and continuous projections $P: X \to Y$ (i.e. $P(y)=y$ for $y \in Y$) and the standard operator norm is considered. A projection $P$ satisfying $\|P\|=\lambda(Y, X)$ will be called a \emph{minimal projection} from $X$ onto $Y$. In a completely general case, a minimal projection does not have to exist (there even may be no continuous projections at all), but in the finite-dimensional setting that we shall consider, its existence follows easily from a standard compactness argument. Clearly we always have $\lambda(Y, X) \geq 1$ and if the equality $\lambda(Y, X)=1$ holds, we say that $Y$ is a \emph{$1$-complemented} subspace of $X$. It turns out, that determining a minimal projection in practice is often a very challenging task. Even when a given minimal projection $P$ is already established to be minimal, it is usually still non-trivial to determine if it is a unique minimal projection or not. Similarly to a uniqueness of an element of best approximation, a stronger property of uniqueness of a minimal projection can be defined. We say that a projection $P: X \to Y$ is an \emph{$\alpha$-strongly unique minimal projection} (where $\alpha \geq 1$) if there exists a constant $r>0$ such that
$$\|Q\|^{\alpha} \geq \|P\|^{\alpha} + r\|Q-P\|^{\alpha},$$
for any linear and continuous projection $Q: X \to Y$. Using the fact that the problem of determining whether given projection is minimal or not can be equivalently stated as a problem of a best approximation in a space of linear operators, we are able to apply methods of the approximation theory, to establish $2$-strong uniqueness of some minimal projections in complex normed spaces. We note that up to this time, a property of strong uniqueness of minimal projections was mainly studied in the most classical case of $1$-strong uniqueness (see for example \cite{baronti}, \cite{lewicki}, \cite{lewickimicek}, \cite{martinov}, \cite{martinov2}, \cite{odyniecprophet}). In particular, we are able to extend a result of Odyniec from $1978$ to the complex setting, proving that for non $1$-complemented two-dimensional subspaces of three dimensional complex normed spaces a minimal projection is not only unique, but even $2$-strongly unique (Theorem \ref{twrcompl}). To our best knowledge, even the uniqueness was not established before in the complex case. Moreover, while a minimal projection onto a hyperplane of the complex $\ell_{\infty}^n$ and $\ell_1^n$ spaces does not have to be unique, if it is unique, then it is automatically $2$-strongly unique and, at least in the case of $\ell_{\infty}^n$, the order can not be improved for non $1$-complemented hyperplanes (Theorems \ref{twrlinfty} and \ref{twrl1proj}). In Theorem \ref{twrprojreal} we obtain a much more general result, which is related to result of Section \ref{sectionreal} -- uniqueness of a minimal projection onto real subspaces of complex polytope spaces and adjoint complex polytope spaces generated by real vectors or real functionals always implies $2$-strong uniqueness. 

Throughout the paper by $B_X$ and $S_X$ we shall always denote the unit ball and unit sphere of a normed space $X$ over $\mathbb{K}$, respectively. The set of \emph{extreme} points of a set $C \subseteq \mathbb{K}^n$ will be understood in a usual way, that is a point $a \in C$ is an extreme point of $C$, if there do not exist $x, y \in C$ and $t \in (0, 1)$ such that $a = tx + (1-t)y$. An \emph{absolutely convex hull} of a set $C \subseteq \mathbb{K}^n$ is defined in a similar way for real and complex scalars as
$$\absconv C = \left \{ \alpha_1x_1 + \ldots +  \alpha_Nx_N: \ x_1, \ldots, x_N \in C, \ \alpha_1, \ldots, \alpha_N \in \mathbb{K}, \ \sum_{j=1}^{n} |\alpha_j| \leq 1 \right \}. $$
To avoid some trivial considerations, we will consider only normed spaces of dimension at least two and by a term \emph{subspace} we shall always mean a proper linear subspace (with some non-zero element but not equal to the whole space).

\section{Complex polytope norms and their duals}
\label{sectionnorms}

A fundamental theorem of convex geometry states that a convex polytope in $\mathbb{R}^n$, i.e. convex hull of a finite number of points, can be equivalently represented as an intersection of finitely many half spaces. In the case of origin-symmetric polytopes, this yield immediately that if the unit ball of a normed space $X=(\mathbb{R}^n, \| \cdot \|)$ is a polytope, then the same is true for the unit ball of the dual norm. The aim of this short section is to provide a proof of fact that in the complex setting the situation is completely opposite, that is, a complex polytope norm is never a dual of a complex polytope norm in $\mathbb{C}^n$ (where $n \geq 2$). While this lies perhaps outside of the main scope of the paper, we believe that this fact represents a truly remarkable contrast between real and complex polytopes and this matter requires some clarification. This was property was stated as an obvious fact in \cite{complexpolytope} (see for example after equation $(52)$ or after equation $(69)$), although no formal explanation was provided there. After a closer inspection, it seems that this property does not follow in any direct way from the results presented in this paper, contrary to what authors believed at that time (confirmed in private communication). We emphasize that we are not aware of any other mention of this result in the existing literature.

It is easy to observe that an adjoint complex polytope space is isometric to a subspace of the complex $\ell_{\infty}^M$ space, while a complex polytope space is isometric to a quotient of the complex $\ell_{1}^N$. Thus, an equivalent statement is that a subspace of the complex $\ell_{\infty}^M$ is never isometric to a quotient of the complex $\ell_{1}^N$, with an obvious exception of $1$-dimensional subspaces. This is clearly not true for the real scalars, but this is also not true if we would consider the complex $\ell_{\infty}$ and $\ell_1$ spaces instead, as it is well-known that every seperable Banach space (real or complex) can be embedded isometrically into $\ell_{\infty}$ and is isometric to a quotient of $\ell_1$. Yet another reformulation, this time using the language of the real geometry, is provided by embedding everything into $\mathbb{R}^{2n}$ in a standard way. In this case, it is easy to see that the unit ball of a complex polytope norm is a convex hull of finitely many Euclidean discs in $\mathbb{R}^{2n}$, while the unit ball of an adjoint polytope norm is an intersection of finitely many cylinders, each having some Euclidean disc as a base. It turns out that, with the exception of $n=1$ (when both convex bodies reduce to just a single Euclidean disc in $\mathbb{R}^2$), such convex bodies never coincide.

Our approach is based on proving the existence of a face with exactly two vertices of any balanced complex polytope $\absconv \{u_1, \ldots, u_N\} \subseteq \mathbb{C}^n$. Then it is relatively easy to conclude that the unit ball of the dual space can not be generated by a finite numbers of functionals. We start with a simple lemma about complex numbers.

\begin{lem}
\label{lemcomplex}
For any pairwise distinct complex numbers $x_1, x_2, \ldots, x_m \in \mathbb{C}$ (where $m \geq 2$) there exists $t \in \mathbb{C}$ with $|t|>0$ arbitrary small such that for some $1 \leq k < l \leq m$ we have
$$|tx_k+1|=|tx_l+1| > |tx_j+1|$$
for any $j \neq k, l$.
\end{lem}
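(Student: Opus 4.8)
The plan is to recast the statement geometrically. Writing $c = -1/t$, one has $t x_j + 1 = t(x_j - c)$ and hence $|t x_j + 1| = |t|\,|x_j - c|$; since $|t|$ is a common positive factor, maximizing $j \mapsto |t x_j + 1|$ is the same as finding the point among $x_1, \dots, x_m$ that is \emph{farthest} from $c$, and the demanded equality $|tx_k+1| = |tx_l+1|$ becomes $|x_k - c| = |x_l - c|$. Because $|t| = 1/|c|$, asking $|t|>0$ to be arbitrarily small is exactly asking $|c|$ to be arbitrarily large. Thus the lemma reduces to: there exist points $c \in \mathbb{C}$ with $|c|$ arbitrarily large such that exactly two of the $x_j$ realize $\max_i |x_i - c|$, with equality. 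I would produce such a $c$ on a suitable perpendicular bisector, sent to infinity, and then set $t = -1/c$.

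To locate the pair, consider the convex hull $P = \conv\{x_1, \dots, x_m\}$. Since the $x_j$ are distinct and $m \ge 2$, $P$ has at least two vertices and hence at least one edge $[p,q]$ joining two of them (in the collinear case $P$ degenerates to a segment and $[p,q]$ is $P$ itself); both $p$ and $q$ are among the $x_j$. Since distances, and hence the farthest-point relation, are invariant under rigid motions, I may compute in Euclidean coordinates in which the line $L$ through $p$ and $q$ is the real axis and $P$ lies in the lower half-plane $\{y \le 0\}$, this being the supporting-line property of an edge with outer unit normal $n$ pointing up. I would take $c$ on the perpendicular bisector of $p$ and $q$ and far down, namely $c = m_0 - R\,n$ with $m_0 = (p+q)/2$ and $R \to \infty$. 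Then $|p - c| = |q - c|$ holds automatically and $|c| \to \infty$, so the corresponding $t = -1/c$ is nonzero and arbitrarily small.

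It remains to verify that every other $x_j$ is \emph{strictly} closer to $c$ than $p,q$ are, for all large $R$. Writing $x_j = (a_j, b_j)$ with $b_j \le 0$ and $m_1$ for the common real coordinate of $c$ and $m_0$, a direct expansion gives $|x_j - c|^2 - |p - c|^2 = (a_j - m_1)^2 + b_j^2 + 2 R b_j - D$, where $D = (|p-q|/2)^2$ is the squared half-length of the edge. For indices with $b_j < 0$ the dominant term $2 R b_j \to -\infty$, so this is negative once $R$ is large; taking the maximum over the finitely many such indices fixes one workable $R$. The only delicate case is $b_j = 0$, where the $R$-term vanishes: here $x_j$ lies on $L \cap P = [p,q]$, so as $x_j \ne p,q$ its real coordinate is strictly between those of $p$ and $q$, whence $(a_j - m_1)^2 < D$ and the inequality holds for every $R$. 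This edge-interior case is where I expect the argument to need care, since pushing $c$ to infinity along a fixed normal direction would leave such collinear points tied with $p$ and $q$; routing $c$ along the perpendicular bisector and invoking the strict containment in the relative interior of the edge is precisely what forces the maximum to be attained at exactly two points. Relabeling $p = x_k$, $q = x_l$ and multiplying the strict distance inequalities by $|t|>0$ then yields the stated conclusion.
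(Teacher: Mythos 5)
Your proposal is correct and follows essentially the same route as the paper's proof: rewrite the condition via $c=-1/t$ as a farthest-point problem, pick an edge $[p,q]$ of the convex hull of the $x_j$, and send $c$ to infinity along the perpendicular bisector of that edge on the side containing the hull. Your version merely makes explicit (via the coordinate expansion and the edge-interior case $b_j=0$) what the paper dismisses as "evident," which is a welcome tightening but not a different argument.
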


\begin{proof}

Let us consider a convex polygon (possibly a segment), which is a convex hull of numbers $x_1, \ldots, x_m$, when considered as points on the complex plane. Without loss of generality we can assume that $x_1, x_2$ are consecutive vertices of this polygon. We shall prove that such complex number $t$, as in the statement, exists for $x_1, x_2$. The desired condition can be written as
$$\left | \frac{-1}{t} - x_1  \right | = \left | \frac{-1}{t} - x_2  \right |  > \left | \frac{-1}{t} - x_j  \right |$$
for any $j>2$. Therefore, thinking geometrically, we seek a point equidistant to $x_1, x_2$ with the distance larger than to any other point $x_j$ for $j>2$. Since $x_1x_2$ is a side of a convex hull of $x_1, x_2, \ldots, x_m$, all of the points lie on one side of the line $x_1x_2$. It is now evident that every point lying on the perpendicular bisector of the line $x_1x_2$, which is sufficiently far away and lying on the same side of this line as all of the points, satisfy the desired condition. Since the number $|\frac{1}{t}|$ can be arbitrarily large, the number $|t|$ can be arbitrarily small and the conclusion follows. 

\end{proof}

In the following lemma, which is crucial for the proof of Theorem \ref{twrnorms}, we establish the existence of a linear functional which is maximized on exactly two vertices of a balanced complex polytope. This result can be considered intuitively clear, as this corresponds just to the existence of an edge of a polytope in a real case. However, we emphasize that a lot of caution is needed when transferring the intuition from the real case to the complex setting. In the real case an edge of a polytope could be also simply defined as a $1$-dimensional face. However, in the complex case it is not longer true that $1$-dimensional faces have necessarily only two vertices -- see for instance Example $3.17$ in \cite{complexpolytope} or Section $3$ in \cite{complexpolytope2dim}, which contains a long and detailed analysis of the case of balanced complex polytopes in $\mathbb{C}^2$. In particular, when a balanced complex polytope in $\mathbb{C}^2$ has exactly three essential vertices, there always exist two $1$-dimensional faces with exactly three vertices (Theorem $3.1$). Such a face is called there \emph{special}, while a face with exactly $2$ vertices is called \emph{regular}. Nevertheless, the analysis provided in this paper shows in particular the existence of regular faces for every number of vertices of a balanced complex polytope in $\mathbb{C}^2$. We provide a direct proof of this fact, which works for every dimension $n \geq 2$.

\begin{lem}
\label{lemface}
Let $n \geq 2$ and let $u_1, \ldots, u_N \in \mathbb{C}^n$ be vectors spanning $\mathbb{C}^n$ such that $u_k \not \in \absconv\{u_j: \ j \neq k\}$ for every $1 \leq k \leq N$. Then there exist $1 \leq k < l \leq N$ and a linear functional $f: \mathbb{C}^n \to \mathbb{C}$ satisfying
$$|f(u_k)|=|f(u_l)|=1$$
and $|f(u_j)|<1$ for every $j \neq k, l$.
\end{lem}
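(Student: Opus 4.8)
The plan is first to produce a single functional whose maximum modulus over the vertices is attained at (at least) two of them, and then to perturb it so that exactly two survive as maximizers, using Lemma \ref{lemcomplex} to guarantee the delicate \emph{exact} equality. Throughout I would use the two standing features of an essential system: the $u_j$ span $\mathbb{C}^n$, and no two of them are parallel, since $u_a = \lambda u_b$ would put one of $u_a, u_b$ into the absolutely convex hull of the other (take $\absconv\{u_b\}$ if $|\lambda|\leq 1$ and $\absconv\{u_a\}$ if $|\lambda|>1$), contradicting essentiality.

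First I would establish that some functional attains $\max_j |f(u_j)|$ at two or more vertices. Since the $u_j$ span, the map $f \mapsto \max_j |f(u_j)|$ is a norm on the dual space, whose unit sphere $S$ is homeomorphic to a Euclidean sphere of dimension $2n-1$ and is therefore connected. For each $j$ put $U_j = \{f \in S : |f(u_j)| > |f(u_i)| \text{ for all } i \neq j\}$; these are open and pairwise disjoint. No single $U_j$ can equal $S$, because any nonzero $f$ vanishing on $u_j$ has $f(u_i)\neq 0$ for some $i$ (spanning), so $f \notin U_j$. By connectedness the $U_j$ cannot cover $S$, so there is $f \in S$ outside every $U_j$; for this $f$ the maximum is attained on a set of indices of size at least $2$. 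After relabelling and normalising, assume this set is $\{1, \dots, p\}$ with $p \geq 2$, that $f(u_j) = e^{i\theta_j}$ for $j \leq p$, and $|f(u_j)| < 1$ for $j > p$.

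The heart of the argument is the perturbation $F = f + t h$, together with the observation that for the active vertices
$$|F(u_j)| = |e^{i\theta_j} + t\,h(u_j)| = |1 + t\,w_j|, \qquad w_j := h(u_j)e^{-i\theta_j}, \quad j \leq p,$$
which is exactly the quantity appearing in Lemma \ref{lemcomplex}. I would first fix a functional $h$ for which $w_1, \dots, w_p$ are pairwise distinct: since $w_j = w_{j'}$ forces $h$ to annihilate the vector $e^{-i\theta_j}u_j - e^{-i\theta_{j'}}u_{j'}$, which is nonzero because the vertices are non-parallel, the bad $h$ lie in a finite union of proper hyperplanes of the dual, and a generic $h$ avoids them all. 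Applying Lemma \ref{lemcomplex} to the distinct numbers $w_1, \dots, w_p$ then yields a $t$ with $|t|$ as small as we wish and indices $k < l \leq p$ with $|1 + tw_k| = |1 + tw_l| > |1 + tw_j|$ for the remaining $j \leq p$; equivalently $|F(u_k)| = |F(u_l)| > |F(u_j)|$ for $j \leq p$, $j \neq k, l$.

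It then remains to control the originally inactive vertices and to renormalise. Since $|f(u_j)| < 1$ for $j > p$ while $|F(u_k)| \to 1$ as $t \to 0$, choosing $|t|$ small enough (which Lemma \ref{lemcomplex} permits) forces $|F(u_j)| < |F(u_k)|$ for every $j > p$ as well. Hence $|F(u_k)| = |F(u_l)| > |F(u_j)|$ for all $j \neq k, l$, and $f^* := F / |F(u_k)|$ satisfies $|f^*(u_k)| = |f^*(u_l)| = 1$ with $|f^*(u_j)| < 1$ otherwise, as required. I expect the main obstacle to be precisely the passage from an approximate tie to the exact equality $|F(u_k)| = |F(u_l)|$: a naive linearisation only shows that two moduli agree to leading order, and it is the exact form $|1 + tw_j|$ combined with Lemma \ref{lemcomplex} that delivers genuine equality, while the remaining points (genericity of $h$, smallness of $t$) are routine.
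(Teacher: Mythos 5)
Your proof is correct, and its skeleton coincides with the paper's: both take a functional whose maximum modulus over the vertices is attained at least twice, perturb it linearly, and invoke Lemma \ref{lemcomplex} to turn the tie into an exact two-point tie while the smallness of $|t|$ keeps the originally inactive vertices strictly below the maximum. The two supporting steps, however, are carried out by genuinely different arguments, and in both cases your route is arguably leaner. For the existence of a tie, the paper picks a point of the unit sphere of $\absconv\{u_1,\ldots,u_N\}$ that is not a unimodular multiple of a vertex, writes it as $x=a_1u_1+\cdots+a_mu_m$ with $\sum_j |a_j|=\|x\|$ and $m\geq 2$, and applies the triangle inequality to a norming functional; you instead observe that the sets $U_j$ of unique maximizers are pairwise disjoint open subsets of the connected dual sphere of $f\mapsto\max_j|f(u_j)|$, none equal to the whole sphere (this is where $n\geq 2$ enters, via a nonzero functional annihilating $u_j$), so they cannot cover it. For the perturbation direction, the paper constructs $g$ with $|g(u_k)|\neq|g(u_l)|$ for \emph{all} pairs --- a condition strong enough to survive the re-phasing of the vertices it performs --- and proves its existence by showing that the closed sets $\{x:\ |\langle x,u_k\rangle|=|\langle x,u_l\rangle|\}$ cannot cover $\mathbb{C}^n$; you absorb the phases into $w_j=h(u_j)e^{-i\theta_j}$ and therefore only need the $w_j$ pairwise distinct among the active indices, which reduces to avoiding finitely many proper subspaces of the dual (non-parallelism of the vertices, which you correctly extract from essentiality, is what makes $e^{-i\theta_j}u_j-e^{-i\theta_{j'}}u_{j'}$ nonzero). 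This weaker genericity requirement is precisely why your simple hyperplane-avoidance argument suffices where the paper needs its more involved covering argument. The remaining points --- that the smallness threshold on $|t|$ can be taken uniform over whichever pair $(k,l)$ Lemma \ref{lemcomplex} returns, and the final rescaling --- match the paper's $R_1,R_2,R_3$ computation, so no gap remains.
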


\begin{proof}
Let $X = (\mathbb{C}^n, \| \cdot \|)$ with $B_X = \absconv \{u_1, \ldots, u_N\}$. We start by observing that there exists a functional $g: \mathbb{C}^n \to \mathbb{C}$ such that $|g(u_k)| \neq |g(u_l)|$ for any $1 \leq k < l \leq N$. Indeed, using the standard inner-product in $\mathbb{C}^n$, this can be restated as $|\langle x, u_k \rangle | \neq |\langle x, u_l \rangle |$ for some $x \in \mathbb{C}^n$ and all $1 \leq k < l \leq N$. Assuming otherwise, we see that the sets 
$$V_{k, l} = \{ x \in \mathbb{C}^n: |\langle x, u_k \rangle | = \langle x, u_l \rangle|  \}$$
would cover the space $\mathbb{C}^n$, i.e. $\bigcup_{1 \leq k < l \leq N} V_{k, l} = \mathbb{C}^n$. As these sets are obviously closed, it follows that at least one of them has non-empty interior, as $\mathbb{C}^n$ is not a finite union of nowhere dense sets. Suppose that $rB_X + v \subseteq V_{k, l}$ for some $v \in \mathbb{C}^n$ and $r>0$. In other words, for every vector $x \in \mathbb{C}^n$ with $\|x\| \leq r$ we have
$$|\langle v+x, u_k \rangle | = |\langle v+x, u_l \rangle |.$$
For $x=0$ we obtain in particular that $| \langle v, u_k \rangle | = | \langle v, u_l \rangle |$. Now, let $x \in \mathbb{C}^n$, $\|x\| \leq r$ be a vector that is perpendicular to $u_l$ (i.e. $\langle x, u_l \rangle = 0$) but not to $u_k$ (such vector exists, as by the assumption, $u_k$ and $u_l$ are not linearly dependent). Furthermore, we multiply $x$ by a suitable scalar of modulus $1$, so that $\frac{\langle v, u_k \rangle}{\langle x, u_k \rangle}$ is a non-negative real number. In this case we have
$$ |\langle v+x, u_k \rangle | = |\langle v, u_k \rangle| + |\langle x, u_k \rangle| > |\langle v, u_k \rangle | = | \langle v, u_l \rangle| = |\langle v + x, u_l \rangle |.$$
We obtained a contradiction and in consequence, there exists a linear functional $g: \mathbb{C}^n \to \mathbb{C}^n$ satisfying $|g(u_k)| \neq |g(u_l)|$ for every $1 \leq k < l \leq N$.

On the other end of the spectrum, it is hard to see that there exists a functional $h \in S_{X^*}$ such that $|h(u_j)|=1$ for at least two different indices $1 \leq j \leq N$. Indeed, obviously there exists a point on the unit sphere $S_X$, which is not of the form $tu_j$ for $|t|=1$ and $1 \leq j \leq N$. Let us suppose that it is of the form $x=a_1u_1 + \ldots + a_mu_m$, where $|a_1| + \ldots + |a_m|=\|x\|=1$, $a_j \neq 0$ and $m \geq 2$. Now if $h \in S_{X^*}$ is a functional such that $h(x)=1$, then from the triangle inequality it easily follows that $|h(u_j)|=1$ for any $1 \leq j \leq m$. Thus, let us further assume that the equality $|h(u_j)|=1$ holds exactly for the indices $1 \leq j \leq m$ and for $m+1 \leq j \leq N$ we have $|h(u_j)|<1$ (possibly $m=N$). 

We shall find a certain combination of the functional $g$ (which differentiates all vectors $u_j$) and the functional $h$ (which maximizes on at least two of them), so that the resulting functional will satisfy the desired condition. To this end, let us define $R_1<1$ as the maximum of $|h(u_j)|$ with $m+1 \leq j \leq N$ (or we take $R_1=0$ if $m=N$). By multiplying $u_j$ for $1 \leq j \leq m$ with appropriate scalars of modulus $1$ we can assume that $h(u_j)=1$ for every such $j$. From the assumption about $g$, it follows that, even after such a readjustment, the complex numbers $g(u_1), \ldots, g(u_m)$ are pairwise distinct. Hence, by Lemma \ref{lemcomplex} we can find $1 \leq k, l \leq m$ and $t \in \mathbb{C}$ with $|t|>0$ arbitrary small such that
$$|tg(u_k)+1| = |tg(u_l)+1| > |tg(u_j)+1|,$$
for $1 \leq j \leq m$, $j \neq k, l$. Now, let us define a functional $f: \mathbb{C}^n \to \mathbb{C}$ as
$$f(x)=tg(x)+h(x)$$
for a given $t$ satisfying the previous condition. Clearly, we have
$$|f(u_k)|=|f(u_l)|>|f(u_j)|$$
for $1 \leq j \leq m$, $j \neq k, l$. Moreover, if $m+1 \leq j \leq N$, then
$$|f(u_j)| = |tg(u_j)+h(u_j)| \leq |t| |g(u_j)| + |h(u_j)| \leq |t|R_2 + R_1,$$
where $R_2$ is a maximum of $|g(u_j)|$ with $m+1 \leq j \leq N$ (or again $R_2=0$ for $m=N$). On the other hand
$$|f(u_k)| = |tg(u_k)+1| \geq 1 - |t||g(u_k)|$$
and similarly for $u_l$. Let $R_3 = \max\{|g(u_k), |g(u_l)|\}$. Thus, since $R_1<1$, we see that the inequality
$$|t|R_2 + R_1 < 1 - |t|R_3$$
is satisfied for $|t|$ sufficiently small, i.e. for $|t| < \frac{1-R_1}{R_2+R_3}$. Thus, after a proper rescaling the functional $f$ satisfies the desired condition and conclusion follows.
\end{proof}

With a functional from the previous lemma, it is not hard to construct infinitely many linear functionals of norm $1$, which can not be contained in absolutely convex hull of finitely many functionals. 

\begin{twr}
\label{twrnorms}
There does not exist a norm in $\mathbb{C}^n$ (where $n \geq 2$), which is simultaneously a complex polytope norm and an adjoint complex polytope norm.
\end{twr}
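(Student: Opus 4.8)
The plan is to argue by contradiction. Suppose a norm $\|\cdot\|$ on $\mathbb{C}^n$ is at once a complex polytope norm and an adjoint complex polytope norm. Then $B_X = \absconv\{u_1, \ldots, u_N\}$ for an essential system of vertices $u_1, \ldots, u_N$, while the adjoint property gives that the dual ball $B_{X^*} = \absconv\{f_1, \ldots, f_M\}$ is itself a balanced complex polytope with an essential system $f_1, \ldots, f_M$. I would first record the elementary fact that the extreme points of a balanced complex polytope are very restricted: since $\absconv\{f_1, \ldots, f_M\} = \conv\bigl(\bigcup_{j} \{\lambda f_j : |\lambda| = 1\}\bigr)$, Milman's theorem forces every extreme point of $B_{X^*}$ to lie on one of the finitely many circles $\{\lambda f_j : |\lambda| = 1\}$. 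In particular, the extreme points of $B_{X^*}$ fall into at most $M$ classes under the proportionality relation $g \sim \lambda g$ (with $|\lambda|=1$). The whole proof then reduces to exhibiting infinitely many pairwise non-proportional extreme points of $B_{X^*}$, which contradicts this.

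To produce them, I apply Lemma \ref{lemface} to the essential system $u_1, \ldots, u_N$ (which spans $\mathbb{C}^n$ and satisfies $u_k \notin \absconv\{u_j : j \ne k\}$), obtaining indices $k, l$ and a functional with $|f(u_k)| = |f(u_l)| = 1 > |f(u_j)|$ for $j \ne k, l$. After multiplying $u_k, u_l$ by unit scalars (which changes neither $B_X$ nor the essential system) I may assume $f(u_k) = f(u_l) = 1$. For a unit complex number $\omega$ close to $1$, consider the slice $G_\omega = \{g \in B_{X^*} : g(u_k) = \bar\omega, \ g(u_l) = 1\}$. The key step is to recognize $G_\omega$ as a genuine \emph{exposed face} of $B_{X^*}$: evaluating at $z_\omega = \omega u_k + u_l \in \mathbb{C}^n$ gives $\re g(z_\omega) \le |g(u_k)| + |g(u_l)| \le 2$ for every $g \in B_{X^*}$, with equality exactly when $g(u_k) = \bar\omega$ and $g(u_l) = 1$, so $G_\omega$ is precisely the face of $B_{X^*}$ on which $g \mapsto \re g(z_\omega)$ is maximized. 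This face is nonempty: choosing $e \in X^*$ with $e(u_k) = 1$ and $e(u_l) = 0$, the functional $f + (\bar\omega - 1)e$ lies in $G_\omega$ once $\omega$ is close enough to $1$, since then $|f(u_j) + (\bar\omega - 1)e(u_j)| < 1$ still holds for all $j \ne k, l$.

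Being a nonempty compact convex set, $G_\omega$ has an extreme point $g_\omega^*$, and since $G_\omega$ is a face of $B_{X^*}$, this $g_\omega^*$ is an extreme point of $B_{X^*}$ as well. By construction $g_\omega^*(u_k) = \bar\omega$ and $g_\omega^*(u_l) = 1$, so if $g_\omega^* = \lambda g_{\omega'}^*$ with $|\lambda| = 1$, comparing values at $u_l$ forces $\lambda = 1$ and then comparing at $u_k$ forces $\omega = \omega'$. Thus, letting $\omega = e^{i\phi}$ run over a small interval of angles, the functionals $g_{e^{i\phi}}^*$ are infinitely many pairwise non-proportional extreme points of $B_{X^*}$, contradicting the fact established above that $B_{X^*}$ has only finitely many proportionality classes of extreme points. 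This contradiction proves the theorem.

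I expect the main obstacle to be the extremality bookkeeping in the second and third paragraphs. The naive idea -- to take the functionals supplied by Lemma \ref{lemface} (or small perturbations of them) directly as the desired extreme points -- fails for $n > 2$, since a functional $g$ with $|g(u_k)| = |g(u_l)| = 1$ and $|g(u_j)| < 1$ otherwise need not be extreme in $B_{X^*}$: it is pinned down only on $\lin\{u_k, u_l\}$ and remains free on an $(n-2)$-dimensional complement. Passing instead to an extreme point of the exposed face $G_\omega$ via Krein--Milman, and reading off the proportionality class from the prescribed values at $u_k$ and $u_l$, is what makes the count work uniformly in the dimension.
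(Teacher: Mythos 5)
Your proof is correct, and it shares its backbone with the paper's argument: both rest on Lemma \ref{lemface}, both normalize so that $f(u_k)=f(u_l)=1$, and both then run a one-parameter family of norm-one functionals pinned at $u_k, u_l$ with the relative phase of the two prescribed values sweeping an arc of the unit circle (your perturbation $f+(\bar\omega-1)e$ is exactly how the paper silently produces its family $f^t$ with $f^t(u_k)=1$, $f^t(u_l)=t$), playing a continuum of parameters against a finiteness constraint coming from $B_{X^*}=\absconv\{f_1,\ldots,f_M\}$. Where you genuinely diverge is the finishing move. The paper never mentions extreme points: it writes $f^t = a_1f_1+\cdots+a_sf_s$ with $|a_1|+\cdots+|a_s|\leq 1$ and uses the equality case of the triangle inequality at $u_k$ and at $u_l$ to force $a_jf_j(u_k)=|a_j|$ and $a_jf_j(u_l)=t|a_j|$, hence $t=f_j(u_l)/f_j(u_k)$, a ratio with only finitely many possible values. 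You instead invoke Milman's converse to Krein--Milman to confine $\ext(B_{X^*})$ to the $M$ circles $\{\lambda f_j:\ |\lambda|=1\}$, recognize your slice $G_\omega$ as the exposed face where $g\mapsto \re g(\omega u_k+u_l)$ attains its maximum $2$, and extract an extreme point of that face, whose values at $u_k, u_l$ pin down its proportionality class. The two mechanisms are dual descriptions of one phenomenon: the paper's equality analysis is, in effect, a hands-on computation showing that every atom $f_j$ supporting $f^t$ lies on the face you call $G_\omega$, i.e.\ it implicitly locates the extreme points of that face. Your route buys conceptual clarity and handles the extremality bookkeeping you rightly flag (for $n>2$ a functional pinned only on $\lin\{u_k,u_l\}$ need not be extreme) through general machinery, isolating the true obstruction as ``too many proportionality classes of extreme points''; the paper's route buys economy and self-containedness, using nothing beyond the triangle inequality on an explicit $\ell_1$-representation, at the cost of the equality-case computation. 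All the supporting steps you use are sound: nonemptiness of $G_\omega$ follows as you say since membership in $B_{X^*}$ amounts to $|g(u_j)|\leq 1$ for all $j$, compactness of $G_\omega$ yields an extreme point, and an extreme point of a face is extreme in the whole ball.
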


\begin{proof}

Let us suppose that $X = (\mathbb{C}^n, \| \cdot \|)$ is a complex normed space with a norm such that $B_X=\absconv\{u_1, \ldots, u_N\}$ for some vectors $u_1, \ldots, u_N \in \mathbb{C}^n$ that span $\mathbb{C}^n$ and satisfy $u_k \not \in \absconv\{u_j: \ j \neq k\}$ for every $1 \leq k \leq N$. Moreover, we assume also that $B_{X^*} = \absconv \{f_1, \ldots, f_M\}$ for some functionals $f_1, \ldots, f_M: \mathbb{C}^n \to \mathbb{C}$, aiming for a contradiction. From the previous lemma we know that there exist $1 \leq k < l \leq N$ and a linear functional $f: \mathbb{C}^n \to \mathbb{C}$ satisfying
$$|f(u_k)|=|f(u_l)|=1$$
and $|f(u_j)|<1$ for every $j \neq k, l$. By multiplying $u_k$, $u_l$ with appropriate scalars of modulus $1$ we can suppose additionally that $f(u_k)=f(u_l)=1$. Clearly, since $u_k$ and $u_l$ are linearly independent, for any scalar $t \in \mathbb{C}$ of modulus $1$ that is sufficiently close to $1$ on the complex unit circle, we can find a functional $f^t: \mathbb{C}^n \to \mathbb{C}$ satisfying $f^t(u_k)=1$, $f^t(u_l)=t$ and $|f^t(u_j)|<1$ for every $j \neq k, l$. Obviously we have $\|f_t\|=1$ for any such $t$. Now, since $f^t \in  \absconv \{f_1, \ldots, f_M\}$ by the assumption, for any fixed $t$ we can write
$$f^t=a_1f_1 + \ldots + a_sf_s,$$
where $|a_1| + \ldots + |a_s| \leq 1$ and we can suppose additionally that $a_j \neq 0$ for $1 \leq j \leq s$ (coefficients $a_j$ depend on $t$). In this case
$$1=f^t(u_k) = a_1f_1(u_k) + \ldots + a_sf_s(u_k).$$
Since $|f_j(u_k)| \leq 1$ for every $1 \leq j \leq s$, it follows from the triangle inequality that
$$|a_1f_1(u_k) + \ldots + a_sf_s(u_k)| \leq |a_1||f_1(u_k)| + \ldots + |a_s||f_s(u_k)| \leq |a_1| + \ldots + |a_s| \leq 1.$$
Since the equality holds, we must have $|f_j(u_k)|=1$ and $a_jf_j(u_k)=|a_j|$ for every $1 \leq j \leq s$. Similarly we establish that $|f_j(u_l)|=1$ and $a_jf_j(u_l)=t|a_j|$ for $1 \leq j \leq s$. In particular, for $j=1$ we have
$$t = \frac{f_1(u_l)}{f_1(u_k)}.$$
However, this is a contradiction, since $t$ can take infinitely many values (that are sufficiently close to $1$ on the complex unit disc) and the set
$$ \left \{\frac{f_j(u_l)}{f_j(u_k)}: 1 \leq j \leq M \quad \text{ and } \quad |f_j(u_l)|=|f_j(u_k)|=1 \right \}.$$
is obviously finite. Hence the norm $\| \cdot \|$ is not an adjoint complex polytope norm and the proof is finished.
\end{proof}

\section{Properties of a best approximation in the complex polytope norms and their adjoints}
\label{sectionproperties}

In this section we turn our attention to the problems concerned with approximation in linear subspaces. We start with recalling a following basic characterization of an element of a best approximation in terms of linear functionals, which works for all normed spaces. Throughout the paper we will often assume for simplicity that $0$ is a best approximation for $x$ in a subspace $Y$. However, this assumption is usually not restrictive at all, since if $y_0$ is a best approximation for $x$ in $Y$, then the vector $x-y_0$ has $0$ as a best approximation.

\begin{lem}
\label{lemfunkcjonal}
Let $X$ be a finite-dimensional normed space over $\mathbb{K}$ and let $Y \subseteq X$ be its linear subspace. If $x \in X \setminus Y$, then $0 \in Y$ is a best approximation for $x$ in $Y$ if and only if, there exists a functional $f \in S_{X^*}$ such that $f(x)=\|x\|$ and $f|_{Y} \equiv 0$. 
\end{lem}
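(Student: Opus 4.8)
The plan is to prove the two implications separately, with the forward implication being elementary and the reverse relying on the Hahn--Banach extension theorem.

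For the easier direction, suppose such an $f \in S_{X^*}$ exists with $f(x) = \|x\|$ and $f|_Y \equiv 0$. Then for any $y \in Y$ I would estimate
$$\|x - y\| \geq |f(x-y)| = |f(x) - f(y)| = |f(x)| = \|x\|,$$
using $\|f\| = 1$ in the first step and $f(y) = 0$ in the third. Since $\|x\| = \|x - 0\|$, this shows that $0$ is a best approximation for $x$ in $Y$.

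For the reverse direction, assume $0$ is a best approximation, i.e. $\|x\| \le \|x - y\|$ for every $y \in Y$. The idea is to first build the desired functional on the subspace $Z = Y \oplus \lin\{x\}$ (a genuine direct sum, since $x \notin Y$) and then extend it. I would define $f_0 \colon Z \to \mathbb{K}$ by $f_0(y + tx) = t\|x\|$, which is well defined by uniqueness of the decomposition, is linear, and satisfies $f_0|_Y \equiv 0$ and $f_0(x) = \|x\|$. The key step is to verify $\|f_0\|_{Z^*} = 1$, and this is exactly where the best-approximation hypothesis enters. For $t = 0$ the bound $|f_0(y)| = 0 \le \|y\|$ is trivial, while for $t \neq 0$ I would write
$$\|y + tx\| = |t| \, \left\| x - \left(-\tfrac{1}{t}\, y\right)\right\| \ge |t| \, \|x\| = |f_0(y + tx)|,$$
using that $-\tfrac{1}{t}\,y \in Y$ together with the optimality of $0$. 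Hence $\|f_0\|_{Z^*} \le 1$, and since $f_0(x/\|x\|) = 1$ with $x/\|x\| \in S_Z$, in fact $\|f_0\|_{Z^*} = 1$. Applying the Hahn--Banach theorem (valid over $\mathbb{C}$ as well, via the Bohnenblust--Sobczyk version) yields an extension $f \in X^*$ with $\|f\| = \|f_0\|_{Z^*} = 1$, and $f$ inherits both $f(x) = \|x\|$ and $f|_Y \equiv 0$ from $f_0$.

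The main obstacle I anticipate is the norm computation for $f_0$; everything else is formal. One must be slightly careful that the manipulation $\|y + tx\| = |t|\,\|x - (-y/t)\|$ and the invocation of optimality cover all $y \in Y$ and all scalars $t \neq 0$, and that the Hahn--Banach step is applied in its complex form so that the argument is genuinely valid for $\mathbb{K} = \mathbb{C}$ and not only the real case.
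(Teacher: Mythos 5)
Your proof is correct and complete: the paper states this lemma without proof, recalling it as a basic known characterization, and your argument — the trivial estimate $\|x-y\| \geq |f(x-y)| = \|x\|$ in one direction, and in the other the functional $f_0(y+tx) = t\|x\|$ on $Y \oplus \lin\{x\}$, whose norm bound $\|y+tx\| = |t|\,\|x-(-y/t)\| \geq |t|\,\|x\|$ comes exactly from optimality of $0$, followed by a norm-preserving Hahn--Banach extension valid over $\mathbb{C}$ — is precisely the standard proof the paper implicitly relies on. You also correctly handle the small points (well-definedness via $x \notin Y$, attainment of the norm at $x/\|x\|$, and the complex form of Hahn--Banach), so there is nothing to add.
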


In a more specific case of the complex adjoint polytope norms, which are defined by a finite set of functionals, the lemma above readily implies a convenient characterization of the element of best approximation and its uniqueness. This characterization can not be really considered to be new, since an adjoint complex polytope norm $\|x\|=\max_{1 \leq j \leq N} |f_j(x)|$ can be isometrically embedded into a complex $\ell_{\infty}^N$ space through the mapping $x \to (f_1(x), \ldots, f_N(x))$. One can then regard $\ell_{\infty}^N$ as the space $C(K)$ for $K=\{1, \ldots, N\}$ and refer to a similar well-known characterization for the space $C(K)$, largely retrieving the lemma below (see for example Corollary $10.2$ in \cite{survey}). Nevertheless, for a purpose of a clarity and completeness of the exposition, it is prudent to have the result stated and proved specifically for the adjoint complex polytope norms.

\begin{lem}
\label{charadjoint}
Let $X=(\mathbb{C}^n, \| \cdot \|)$ be a complex normed space with a complex adjoint polytope norm given for $x \in \mathbb{C}^n$ as
$$\|x\| = \max_{1 \leq j \leq N} |f_j(x)|,$$
where $f_1, \ldots, f_N: \mathbb{C}^n \to \mathbb{C}$ are non-zero functionals. Let $Y \subseteq X$ be a linear subspace and let $x \in X \setminus Y$ be a vector. Then $0 \in Y$ is a best approximation for $x$ in $Y$ if and only if, for every $y \in Y$ there exists $1 \leq j \leq N$ such that $|f_j(x)|=\|x\|$ and $\re \left (\overline{f_j(x)}f_j(y) \right ) \leq 0$. Moreover, $0$ is a unique best approximation for $x$ in $Y$ if and only if, for every non-zero $y \in Y$ there exists $1 \leq j \leq N$ such that $|f_j(x)|=\|x\|$, $\re \left ( \overline{f_j(x)}f_j(y) \right ) \leq 0$ and $f_j(y) \neq 0$.
\end{lem}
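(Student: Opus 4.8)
The plan is to reduce everything to the single algebraic identity
$$|f_j(x) - t f_j(y)|^2 = |f_j(x)|^2 - 2t\,\re\!\left(\overline{f_j(x)}\,f_j(y)\right) + t^2 |f_j(y)|^2,$$
valid for real $t$, while keeping track of the set of norming indices $J = \{\, j : |f_j(x)| = \|x\| \,\}$, which is nonempty since $\|x\| = \max_j |f_j(x)|$ and $x \neq 0$. Because $\|x - y\| = \max_{1 \le j \le N} |f_j(x) - f_j(y)|$, the whole problem becomes a bookkeeping exercise on the finitely many functions $h_j(t) = |f_j(x) - t f_j(y)|$ near $t = 0$ and at $t = 1$.

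For the characterization of best approximation, the sufficiency is the easy direction: given $y \in Y$ I would apply the hypothesis to $y$ itself to obtain $j \in J$ with $\re(\overline{f_j(x)} f_j(y)) \le 0$; the identity at $t = 1$ then gives $|f_j(x - y)|^2 = \|x\|^2 - 2\re(\overline{f_j(x)} f_j(y)) + |f_j(y)|^2 \ge \|x\|^2$, whence $\|x - y\| \ge |f_j(x-y)| \ge \|x\|$, so $0$ is a best approximation. For necessity I would argue by contraposition: if some $y \in Y$ has $\re(\overline{f_j(x)} f_j(y)) > 0$ for every $j \in J$, then for each norming index the linear term in $t$ dominates, so $h_j(t) < \|x\|$ for all sufficiently small $t > 0$; meanwhile, for the finitely many non-norming indices $j \notin J$ we have $|f_j(x)| < \|x\|$, hence $h_j(t) < \|x\|$ for small $t$ by continuity. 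Choosing $t > 0$ small enough to handle all $N$ indices at once yields $\|x - ty\| < \|x\|$ with $ty \in Y$, contradicting that $0$ is a best approximation.

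The uniqueness statement is proved by the same mechanism with the inequalities sharpened via the hypothesis $f_j(y) \neq 0$. For sufficiency, given a nonzero $y \in Y$, I apply the hypothesis to $y$ to find $j \in J$ with $\re(\overline{f_j(x)} f_j(y)) \le 0$ and $f_j(y) \neq 0$; the identity at $t = 1$ now gives $|f_j(x - y)|^2 = \|x\|^2 - 2\re(\overline{f_j(x)} f_j(y)) + |f_j(y)|^2 > \|x\|^2$, because $|f_j(y)|^2 > 0$, so $\|x - y\| > \|x\|$ and the best approximation is unique. For necessity I again use contraposition: if some nonzero $y$ has, for every $j \in J$, either $\re(\overline{f_j(x)} f_j(y)) > 0$ or $f_j(y) = 0$, then for small $t > 0$ each norming function satisfies $h_j(t) \le \|x\|$ — strictly below when $\re(\overline{f_j(x)} f_j(y)) > 0$, and exactly $\|x\|$ when $f_j(y) = 0$ makes $h_j$ constant — while the non-norming indices stay strictly below as before; hence $\|x - ty\| \le \|x\|$, which together with the fact that $0$ is a best approximation forces $\|x - ty\| = \|x\|$ for a nonzero $ty \in Y$, contradicting uniqueness.

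The main obstacle, and the only place where the \emph{adjoint} polytope structure is essential, lies in the two necessity directions: one must pass from a statement about the sign of the first-order term of each active function to an honest inequality $\|x - ty\| < \|x\|$ (respectively $= \|x\|$) for a concrete small $t$. This works precisely because the norm is a maximum over a \emph{finite} family, so the non-norming functionals can be pushed below $\|x\|$ uniformly on a fixed interval $(0, \varepsilon)$; the remaining care is purely bookkeeping, namely distinguishing in the uniqueness argument the genuinely strict case $\re(\overline{f_j(x)} f_j(y)) = 0$ with $f_j(y) \neq 0$ (where the quadratic term forces strict increase) from the degenerate case $f_j(y) = 0$ (where $h_j$ is constant and contributes nothing).
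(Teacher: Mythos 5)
Your proposal is correct, and all four implications go through as written. The sufficiency directions (plug $t=1$ into the expansion of $|f_j(x-y)|^2$) and the necessity of the uniqueness criterion (perturb $x$ by $ty$ for small $t>0$ and show $\|x-ty\| \leq \|x\|$) coincide with the paper's proof, the only difference being that the paper exhibits an explicit admissible $t$ (namely $t \leq \frac{\|x\|-M}{\|y\|}$ and $t \leq \frac{2\re(\overline{f_j(x)}f_j(y))}{|f_j(y)|^2}$ over the relevant indices) where you argue softly by finiteness. Where you genuinely diverge is the necessity of the best-approximation criterion: the paper invokes Lemma \ref{lemfunkcjonal} to get a norming functional $f \in S_{X^*}$ with $f(x)=\|x\|$ and $f|_Y \equiv 0$, decomposes it as $f = a_1f_1 + \ldots + a_mf_m$ using $B_{X^*} = \absconv\{f_1, \ldots, f_N\}$, extracts $a_jf_j(x)=|a_j|\,\|x\|$ from the equality case of the triangle inequality, and concludes that the real parts $\re\left(\overline{f_j(x)}f_j(y)\right)$ sum to zero over the active indices, so one is $\leq 0$. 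You instead run the same first-order perturbation argument in contrapositive form: if $\re\left(\overline{f_j(x)}f_j(y)\right) > 0$ for every norming index (which forces $f_j(y) \neq 0$ there), the linear term drives every norming $|f_j(x-ty)|$ strictly below $\|x\|$ for small $t>0$, while the non-norming indices stay below by continuity, so $\|x-ty\| < \|x\|$. Your route is more elementary and more uniform — it never uses duality or the fact that the $f_j$ generate $B_{X^*}$, only the max formula for the norm, and it handles both necessity directions by one mechanism — whereas the paper's dual argument yields slightly more structural information (the active real parts sum to exactly zero, not merely that one is nonpositive) and keeps the proof aligned with the Kolmogorov-criterion machinery reused later for Lemma \ref{lemchar} and the Chalmers--Metcalf material.
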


\begin{proof}
If for a given $y \in Y$ there exists $1 \leq j \leq N$ such that $|f_j(x)|=\|x\|$ and $\re \left ( \overline{f_j(x)}f_j(y) \right ) \leq 0$, then
$$\|x-y\|^2 \geq |f_j(x-y)|^2 = |f_j(x)|^2 - 2\re\left ( \overline{f_j(x)}f_j(y) \right ) + |f_j(y)|^2 \geq |f_j(x)|^2= \|x\|^2,$$
which proves that $\|x-y\| \geq \|x\|$ and hence $0$ is a best approximation for $x$ in $Y$. Moreover, the inequality is strict if $f_j(y) \neq 0$. Thus, if this holds for every non-zero $y \in Y$, then $0$ is a unique best approximation for $x$ in $Y$.

On the other hand, if $0$ is a best approximation for $x$, then by the previous lemma, there exists a functional $f \in B_{X^*}$ such that $f(x)=\|x\|$ and $f|_{Y} \equiv 0$. As $B_{X^*} = \absconv \{f_1, \ldots, f_N\}$ we can write $f$ as a convex combination of $f_1, \ldots, f_N$ multiplied with some complex scalars of modulus $1$. Without loss of generality let us assume that $f = a_1f_1 + \ldots + a_mf_m$, where $a_i \in \mathbb{C}$ are non-zero and $|a_1| + \ldots + |a_m|=1$. Then from the condition $f(x)=\|x\|$ and the triangle inequality it easily follows that $f_j(x)=\overline{a_j}\|x\|$ for every $1 \leq j \leq m$. Moreover, the condition $f|_Y \equiv 0$ implies that for every $y \in Y$ we have
$$0  = \re(f(y)) = \re(a_1f_1(y) + \ldots + a_mf_m(y))$$
$$=\|x\|\left ( \re\left ( \overline{f_1(x)}f_1(y) \right ) + \ldots + \re\left ( \overline{f_m(x)}f_m(y) \right )\right ),$$
and hence for at least one index $1 \leq j \leq m$ the inequality $\re(\overline{f_j(x)}f_j(y)) \leq 0$ must be true. 

To finish the proof, let us now assume additionally that $0$ is a unique best approximation for $x$ in $Y$. For the sake of a contradiction, we suppose that there exists a non-zero $y \in Y$ such that for every index $1 \leq j \leq N$ with $|f_j(x)|=\|x\|$ we have either  $\re(\overline{f_j(x)}f_j(y)) > 0$ or $f_j(y)=0$. Let 
$$M = \max \{|f_j(x)|: \ 1 \leq j \leq N, \ |f_j(x)|<\|x\| \}$$
(we take $M=0$ if $|f_j(x)|=\|x\|$ for every $1 \leq j \leq N)$. Then clearly $M < \|x\|$. Let $t$ be a positive real number which is not greater than $\frac{\|x\|-M}{\|y\|}>0$ and also not greater than every element of a set 
$$\left \{ \frac{2\re \left ( \overline{f_j(x)}f_j(y) \right )}{|f_j(y)|^2}: \ 1 \leq j \leq N, \ |f_j(x)|=\|x\|, \ \re\left ( \overline{f_j(x)}f_j(y) \right )>0 \right \}.$$ 
If now $1 \leq j \leq N$ is index such that $|f_j(x)|=\|x\|$ and $f_j(y)=0$ then obviously $|f_j(x-ty)|=\|x\|$. If $|f_j(x)|=\|x\|$ and $\re \left (\overline{f_j(x)}f_j(y) \right )>0$, then by the choice of $t$ we have
$$\|f_j(x-ty)\|^2 = |f_j(x)|^2 - 2t\re \left ( \overline{f_j(x)}f_j(y) \right ) + t^2|f_j(y)|^2 \leq |f_j(x)|^2=\|x\|^2.$$
Finally, if $|f_j(x)| < \|x\|$ then again by the choice of $t$
$$|f_j(x-ty)| \leq M + t\|y\| \leq  M + \|x\|-M=\|x\|.$$
It follows that $\|x-ty\|=\max_{1 \leq j \leq N} |f_j(x-ty)| \leq \|x\|$, which contradicts the assumption that $0$ is a unique best approximation for $x$ and the conclusion follows.
\end{proof}

Using the characterization above it is quite simple to establish that in an adjoint complex polytope norm, a uniqueness of best approximation in $1$-dimensional subspaces automatically implies a $2$-strong uniqueness, similarly like in Theorem \ref{twrsudolskiwojcik} of Sudolski and Wójcik.

\begin{obs}
\label{obs1dim}
Let $X=(\mathbb{C}^n, \| \cdot \|)$ be a complex normed space with an adjoint complex polytope norm. Let $Y \subseteq X$ be a $1$-dimensional subspace and let $x \in X \setminus Y$ be a vector, for which $y_0 \in Y$ is a unique best approximation for $x$ in $Y$. Then $y_0$ is a $2$-strongly unique best approximation for $x$ in $Y$.
\end{obs}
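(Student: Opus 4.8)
The plan is to first reduce to the case $y_0=0$ by replacing $x$ with $x-y_0$, so that $0$ is the unique best approximation for $x$ in $Y$ and the desired conclusion becomes: there is $r>0$ with $\|x-y\|^2\ge\|x\|^2+r\|y\|^2$ for all $y\in Y$. Since $\dim Y=1$, I would write $Y=\{sv:\ s\in\mathbb{C}\}$ for a fixed nonzero $v$, and set $\mu=\|x\|$, $a_j=f_j(x)$, $b_j=f_j(v)$, and $J=\{j:\ |a_j|=\mu\}$. It then suffices to find $r'>0$ with $\|x-sv\|^2\ge\mu^2+r'|s|^2$ for all $s$, since $\|sv\|=|s|\,\|v\|$ and one may take $r=r'/\|v\|^2$.

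Next I would reduce the problem to the active functionals. For $j\in J$ put $q_j(s)=|a_j-sb_j|^2-\mu^2=-2\re(\overline{a_j}b_j s)+|s|^2|b_j|^2$. Since enlarging the index set can only increase the maximum,
\[
\|x-sv\|^2-\mu^2=\max_{1\le j\le N}|a_j-sb_j|^2-\mu^2\ \ge\ \max_{j\in J}q_j(s)
\]
for \emph{every} $s$ (not merely for small $s$), so it is enough to bound $\max_{j\in J}q_j(s)$ below by $r'|s|^2$. Thus everything reduces to the single inequality
\[
D_0:=\inf_{s\neq 0}\frac{\max_{j\in J}q_j(s)}{|s|^2}>0.
\]

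The core step is proving $D_0>0$, and here I would argue by contradiction using Lemma \ref{charadjoint} together with the finiteness of $J$. If $D_0=0$, pick $s_n\neq 0$ with $\max_{j\in J}q_j(s_n)/|s_n|^2\to 0$. Applying the uniqueness criterion of Lemma \ref{charadjoint} to the nonzero element $s_nv\in Y$ produces an index $j_n\in J$ with $\re(\overline{a_{j_n}}b_{j_n}s_n)\le 0$ and $b_{j_n}\neq 0$; for this index $q_{j_n}(s_n)\ge|s_n|^2|b_{j_n}|^2$, whence $\max_{j\in J}q_j(s_n)/|s_n|^2\ge|b_{j_n}|^2$. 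Since $J$ is finite, some index $j_0$ recurs infinitely often among the $j_n$, and along that subsequence the ratio is bounded below by the fixed positive number $|b_{j_0}|^2$, contradicting convergence to $0$. Combining the estimates yields $\|x-sv\|^2\ge\mu^2+D_0|s|^2$, which is the claimed $2$-strong uniqueness with constant $r=D_0/\|v\|^2$.

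The point to get right, and where a naive approach would stumble, is the passage to a \emph{uniform} lower bound. A direct compactness argument over the circle of directions $s/|s|$ fails, because the constant that the uniqueness criterion yields in a direction $\omega$ can collapse as $\omega$ crosses a direction where some active functional is ``tangent'' (i.e. $\re(\overline{a_j}b_j\omega)=0$), so the relevant infimum is not lower semicontinuous. The observation that circumvents this is that one-dimensionality makes $|f_j(y)|/\|y\|=|b_j|/\|v\|$ constant along $Y$ for each fixed $j$; hence applying Lemma \ref{charadjoint} at the perturbed point $s_nv$ \emph{itself} (rather than at some limiting direction) and invoking the finiteness of $J$ to stabilize the witnessing index delivers the bound with no continuity considerations at all. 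This is precisely the feature that breaks down in higher dimensions, consistent with the failure recorded in Example \ref{counterexample}.
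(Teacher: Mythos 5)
Your proposal is correct and follows essentially the same route as the paper: reduce to $y_0=0$, apply the uniqueness part of Lemma \ref{charadjoint} at each nonzero element of $Y$ to produce an active functional $f_j$ with $\re\left(\overline{f_j(x)}f_j(sv)\right)\le 0$ and $f_j(v)\neq 0$, extract the quadratic term $|s|^2|f_j(v)|^2$, and use finiteness of the active set to make the constant uniform. The only difference is presentational — the paper obtains the uniform constant directly as $\min\{|f_j(y)|^2:\ j\ \text{active},\ f_j(y)\neq 0\}$, so your contradiction/subsequence argument (and the concluding discussion of why compactness over directions is not needed) is a slightly longer way of packaging the same observation.
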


\begin{proof}

By considering $x-y_0$ instead of $x$ we can assume that $y_0=0$. Let $f_1, \ldots, f_N \in B_{X^*}$ be functionals such that $B_{X^*} = \absconv \{f_1, \ldots, f_N\}$ and suppose that $Y = \lin \{y\}$ for some vector $y \in \mathbb{C}^n$ with $\|y\|=1$. Without loss of generality we can assume that $|f_j(x)|=\|x\|$ for $1 \leq j \leq m$, while $|f_j(x)|<\|x\|$ for $m+1 \leq j \leq N$ (possibly $m=N$). We want to prove that there exists a constant $r>0$ such that
$$\|x-\alpha t y\|^2 \geq \|x\|^2 + rt^2,$$
for every scalar $\alpha \in \mathbb{C}$ with $|\alpha|=1$ and $t \geq 0$. By Lemma \ref{charadjoint}, for a fixed $\alpha \in \mathbb{C}$ of modulus $1$ there exists $1 \leq j \leq m$ such that $\re\left ( \overline{f_j(x)}f_j(\alpha y) \right ) \leq 0$ and $f_j(y) \neq 0$. Then
$$\|x - \alpha t y\|^2 \geq |f_j(x - \alpha t y)|^2 = \|x\|^2 - 2t \re\left ( \overline{f_j}(x)f_j(\alpha y) \right ) + t^2 |f_j(y)|^2  \geq x^2 + t^2|f_j(y)|^2.$$
This shows that the desired inequality is true with the constant $r>0$ defined as
$$r= \min\{|f_j(y)|: \ 1 \leq j \leq m \text { and } f_j(y) \neq 0\}.$$
\end{proof}

It turns out, that a property like in the observation above, can fail already for two-dimensional subspaces. A counterexample can be found in the most basic example of an adjoint complex polytope norm, that is $\ell_{\infty}$ norm.

\begin{example}
\label{counterexample}
Let $X = (\mathbb{C}^4, \| \cdot \|_{\infty})$ and let $Y \subseteq X$ be a two-dimensional subspace defined as a linear span of $u, v \in \mathbb{C}^4$, where
$$u = (0, 0, -i, i) \quad \text { and } \quad  v = (i, -i, 1-i, 1+i).$$
We shall verify that $0$ is a unique best approximation in $Y$ for a unit vector $x=(1, 1, 1, 1)$. Indeed, let us take $y = (y_1, y_2, y_3, y_4) \in Y$, $y \neq 0$ of the form $y=au+bv$, where $a, b \in \mathbb{C}$. If $b=0$, then $a \neq 0$ and hence
$$\|x-y\|_{\infty} \geq \max \{|1-y_3|, |1-y_4| \} =  \max\{|1-ai|, |1+ai|\}>1.$$
On the other hand, for $b \neq 0$ we can estimate
$$\|x-y\|_{\infty} \geq \max \{|1-y_1|, |1-y_2|\} = \max \{|1-bi|, |1+bi| \}>1.$$
Thus $\|x-y\|_{\infty}>1$ for $y \in Y$, $y \neq 0$.

Now we will prove that $0$ is not a $2$-strongly unique best approximation for $x$ in $Y$. For an integer $n \geq 1$ let us define a vector $y^n \in Y$ as 
$$y^n=  \left (\frac{1}{n} - \frac{1}{n^2} \right )u + \frac{1}{n^2} v = \left ( \frac{i}{n^2}, \frac{-i}{n^2}, \frac{1}{n^2} - \frac{i}{n} , \frac{1}{n^2} + \frac{i}{n}  \right ) .$$ 
Clearly $\|y^n\|_{\infty}= \sqrt{\frac{1}{n^2} + \frac{1}{n^4}}$ and
$$x-y^n = \left ( 1 - \frac{i}{n^2}, 1 + \frac{i}{n^2}, 1 - \frac{1}{n^2} + \frac{i}{n}, 1 - \frac{1}{n^2} - \frac{i}{n} \right ).$$
Since 
$$\left | 1 - \frac{1}{n^2} \pm \frac{i}{n} \right | = \sqrt{1 - \frac{2}{n^2} + \frac{1}{n^4} + \frac{1}{n^2}} = \sqrt{1 - \frac{1}{n^2} + \frac{1}{n^4}} < 1$$
it follows that
$$\|x-y^n\|_{\infty} = \left  |1 \pm \frac{i}{n^2} \right | =  \sqrt{1 + \frac{1}{n^4}}.$$
Hence
$$\frac{\|x-y^n\|_{\infty}^2 - \|x\|_{\infty}^2}{\|y^n\|^2_{\infty}} = \frac{1 + \frac{1}{n^4} - 1}{\frac{1}{n^2} + \frac{1}{n^4}} = \frac{\frac{1}{n^4}}{\frac{1}{n^2} + \frac{1}{n^4}} = \frac{1}{n^2+1} \to 0.$$
We conclude that there is no constant $r>0$ such that $\|x-y^n\|_{\infty}^2 \geq \|x\|_{\infty}^2 + r \|y^n\|_{\infty}^2$ for every $n$ and the proof is finished.

\end{example}

We remark that by adding zeroes, the previous example can be obviously carried over to every $\ell_{\infty}^n$ space for $n \geq 4$. Moreover, it is easy to check that for the constructed sequence $(y^n)$, the smallest $\alpha$ for which there is no contradiction with the assumption of $\alpha$-strong uniqueness is $\alpha=4$. We do not know if there is anything special about $\alpha=4$ in general. It is a natural question if for every $n \geq 1$ there exists $\alpha(n) \geq 1$ such that for every subspace $Y \subseteq \ell_{\infty}^n$ and every vector $x \in X \setminus Y$ with a unique best approximation in $Y$, this best approximation is also $\alpha(n)$-strongly unique.

We turn our attention to the case of complex polytope norms, which seems to be more difficult to handle, as in this case there is no convenient formula for the norm. However, also in this case we provide a characterization of an element of a best approximation and of its uniqueness. Before that we need a simple auxiliary result.

\begin{lem}
\label{ineqcompl}
For any two complex numbers $x, y \in \mathbb{C}$ such that $x \neq 0$ we have
$$|x-y| \geq |x| - \frac{\re(\overline{x}y)}{|x|}.$$
Moreover, if the equality holds then $\im \left ( \frac{\overline{x}}{|x|}y \right ) = 0.$
\end{lem}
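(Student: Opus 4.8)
The plan is to reduce everything to the elementary expansion $|x-y|^2 = |x|^2 - 2\re(\overline{x}y) + |y|^2$ together with the relation $|\overline{x}y|^2 = \re(\overline{x}y)^2 + \im(\overline{x}y)^2 = |x|^2|y|^2$. Writing $r = \re(\overline{x}y)$ and $s = \im(\overline{x}y)$, the claimed lower bound is $|x| - r/|x| = (|x|^2 - r)/|x|$, so the whole statement becomes a comparison between $|x-y|^2$ and a rational expression in $r$, $s$, $|x|$, $|y|$.

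First I would dispose of the trivial case. If $|x| - r/|x| < 0$, then the right-hand side of the asserted inequality is negative while $|x-y| \geq 0$, so the inequality holds automatically and, in fact, strictly; in particular equality is impossible in this regime.

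In the remaining case $|x| - r/|x| \geq 0$ both sides are non-negative, so the inequality is equivalent to its squared form $|x-y|^2 \geq (|x| - r/|x|)^2$. Expanding, the left-hand side is $|x|^2 - 2r + |y|^2$ and the right-hand side is $|x|^2 - 2r + r^2/|x|^2$; after cancelling the common terms, the inequality collapses to $|y|^2 \geq r^2/|x|^2$, that is $|x|^2|y|^2 \geq r^2$. This is immediate from $|x|^2|y|^2 = |\overline{x}y|^2 = r^2 + s^2$.

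The equality analysis then drops out of the same computation. Equality in the squared inequality forces $s^2 = 0$, i.e. $\im(\overline{x}y) = 0$, and dividing by the positive scalar $|x|$ gives $\im\left(\frac{\overline{x}}{|x|}y\right) = 0$, exactly as required. Since squaring preserves equality for non-negative quantities, this is precisely the equality case of the original inequality, and the strict negativity observed in the trivial case rules out equality there. I anticipate no real obstacle; the only point meriting a little care is the case split on the sign of the right-hand side, which ensures the squaring step is legitimate and that no spurious equality slips through.
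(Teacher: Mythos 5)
Your proof is correct and follows essentially the same route as the paper: dispose of the case where the right-hand side is negative, then square and cancel to reduce the inequality to $|x|^2|y|^2 \geq \re(\overline{x}y)^2$, which the paper phrases equivalently as $|y| \geq \left|\re\left(\frac{\overline{x}}{|x|}y\right)\right|$, with the equality case forcing $\im(\overline{x}y)=0$ in both arguments. Your explicit use of the identity $|\overline{x}y|^2 = \re(\overline{x}y)^2 + \im(\overline{x}y)^2$ is just a transparent rewriting of the paper's final step, so there is no substantive difference.
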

\begin{proof}
If the right-hand side is negative, then there is nothing to prove as the left-hand side is non-negative. Otherwise, after squaring we get an equivalent inequality
$$|x-y|^2 = |x|^2 - 2\re(\overline{x}y) + |y|^2 \geq |x|^2 - 2\re(\overline{x}y) + \frac{\re(\overline{x}y)^2}{|x|^2},$$
which rewrites simply as
$$|y| \geq \left | \re\left ( \frac{\overline{x}}{|x|}y \right ) \right |.$$
This inequality is obviously true as $\frac{\overline{x}}{|x|}$ is a complex number of modulus $1$. Moreover, if the equality holds then $\frac{\overline{x}}{|x|}y$ is a real number and thus $\im \left ( \frac{\overline{x}}{|x|}y  \right )=0$.
\end{proof}

Below we provide a characterization similar to Lemma \ref{charadjoint}, but for the complex polytope norms. To our best knowledge such characterization was known only in the case of the most obvious example of a complex polytope norm, that is $\ell_1$ norm. For a general space $L_1(\mu)$ a characterization of an element of best approximation is known at least from $1965$ and it probably first appeared in \cite{kripkerivlin} (Theorem $1.3$), see also Section $5$ in \cite{survey}.

\begin{lem}
\label{lemchar}
Let $X = (\mathbb{C}^n, \| \cdot \|)$ be a space with a complex polytope norm with an essential system of vertices $\{ u_1, \ldots, u_N \} \subseteq \mathbb{C}^n$. Suppose that $Y \subseteq X$ is a linear subspace and $x \in X \setminus Y$ is a vector with a representation $x = a_1u_1 + \ldots a_Nu_N$ satisfying $\|x\|=|a_1| + \ldots + |a_N|.$ Let us suppose additionally that $1 \leq m \leq N$ is such that $a_j \neq 0$ for $1 \leq j \leq m$, while $a_j=0$ for $m+1 \leq j \leq N$ (possibly $m=N$ when all of the coordinates are non-zero). Then $0$ is a best approximation for $x$ in $Y$ if and only if, for every $y \in Y$ and every representation $y=b_1u_1 + \ldots + b_Nu_N$ we have
\begin{equation}
\label{ineqcond}
\left | \sum_{j=1}^{m} \frac{\overline{a_j}}{|a_j|} b_j \right | \leq \sum_{j=m+1}^{N} |b_j|.
\end{equation}
Moreover, $0$ is a unique best approximation for $x$ in $Y$ if and only if, the additional condition holds: if $y=b_1u_1 + \ldots + b_Nu_N \in Y$, $y \neq 0$ is such that
$$ \sum_{j=1}^{m} \frac{\overline{a_j}}{|a_j|} b_j = \sum_{j=m+1}^{N} |b_j|,$$
then there exists $1 \leq j \leq m$ such that $\im(\overline{a_j}b_j) \neq 0$.

\end{lem}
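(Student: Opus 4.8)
The plan is to reduce both the existence statement and the uniqueness statement to the functional characterization of Lemma \ref{lemfunkcjonal}, combined with the elementary inequality of Lemma \ref{ineqcompl}, using throughout the fact that for a complex polytope norm the dual norm is $\|f\|_{*}=\max_{1\le j\le N}|f(u_j)|$. Abbreviate $c_j=\overline{a_j}/|a_j|$ for $1\le j\le m$, so that $|c_j|=1$ and $c_ja_j=|a_j|$. For the necessity of \eqref{ineqcond} I would first suppose that $0$ is a best approximation and use Lemma \ref{lemfunkcjonal} to obtain $f\in S_{X^{*}}$ with $f(x)=\|x\|$ and $f|_{Y}\equiv 0$. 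Since $\|f\|_{*}\le 1$ forces $|f(u_j)|\le 1$ for all $j$, the chain $\|x\|=f(x)=\sum_{j=1}^{m}a_jf(u_j)$ together with the triangle inequality must be an equality, which pins down $f(u_j)=c_j$ for every $1\le j\le m$. Then for any $y=\sum_j b_ju_j\in Y$ the relation $0=f(y)$ rearranges to $\sum_{j=1}^{m}c_jb_j=-\sum_{j=m+1}^{N}b_jf(u_j)$, and taking moduli with $|f(u_j)|\le 1$ gives precisely \eqref{ineqcond}.

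For the sufficiency of \eqref{ineqcond} I would instead argue directly that $\|x-y\|\ge\|x\|$ for every $y\in Y$. Given $y$, I choose an optimal representation $x-y=\sum_j\gamma_ju_j$ with $\|x-y\|=\sum_j|\gamma_j|$, and set $b_j:=a_j-\gamma_j$, which is a representation of $y$ to which \eqref{ineqcond} applies. Applying Lemma \ref{ineqcompl} to each pair $(a_j,b_j)$ with $1\le j\le m$ and summing yields $\sum_{j=1}^{m}|\gamma_j|=\sum_{j=1}^{m}|a_j-b_j|\ge\|x\|-\re\big(\sum_{j=1}^{m}c_jb_j\big)$, while $\sum_{j=m+1}^{N}|\gamma_j|=\sum_{j=m+1}^{N}|b_j|$ because $a_j=0$ there. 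Since \eqref{ineqcond} bounds $\re\big(\sum_{j=1}^{m}c_jb_j\big)\le\big|\sum_{j=1}^{m}c_jb_j\big|\le\sum_{j=m+1}^{N}|b_j|$, adding the two estimates gives $\|x-y\|\ge\|x\|$, as required.

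For the uniqueness equivalence I would track the equality case of the previous paragraph. If some $y\neq 0$ satisfies $\|x-y\|=\|x\|$, then every inequality in that argument becomes an equality: equality in the summed Lemma \ref{ineqcompl} estimate forces termwise equality (each summand being a genuine lower bound), so the ``moreover'' clause gives $\im(\overline{a_j}b_j)=0$ for all $1\le j\le m$, and equality in the bound on $\re\big(\sum c_jb_j\big)$ forces $\sum_{j=1}^{m}c_jb_j=\sum_{j=m+1}^{N}|b_j|$. This exhibits a representation of a nonzero $y$ violating the additional condition, so non-uniqueness implies its failure. Conversely, I would start from a nonzero $y=\sum b_ju_j$ with $\sum_{j=1}^{m}c_jb_j=\sum_{j=m+1}^{N}|b_j|$ and $\im(\overline{a_j}b_j)=0$ for all $j\le m$; the latter means $b_j=\lambda_ja_j$ with $\lambda_j\in\mathbb{R}$ for $j\le m$. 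Replacing $y$ by $ty$ with $t>0$ small enough that $1-t\lambda_j>0$ for all $j$, the triangle inequality together with $\sum_{j\le m}\lambda_j|a_j|=\sum_{j>m}|b_j|$ gives $\|x-ty\|\le\sum_{j\le m}(1-t\lambda_j)|a_j|+t\sum_{j>m}|b_j|=\|x\|$, and combined with the already established $\|x-ty\|\ge\|x\|$ this produces a nonzero best approximation, so $0$ is not unique.

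The main obstacle is that \eqref{ineqcond} is quantified over all representations of $y$, so at each step the proof must select the right one: for sufficiency it is the representation induced by an optimal expansion of $x-y$, while for the converse in the uniqueness part it is the rescaling $y\mapsto ty$ that lets me dispense with any assumption $\lambda_j\in[0,1]$ and still force $\|x-ty\|=\|x\|$. The other delicate point is the passage from equality of sums to equality of individual terms, which is legitimate only because each summand in the Lemma \ref{ineqcompl} estimate is itself a lower bound; this is exactly what makes the imaginary-part condition appear for all indices $1\le j\le m$ simultaneously.
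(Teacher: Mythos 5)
Your proof is correct, and most of it coincides in substance with the paper's argument: the sufficiency of \eqref{ineqcond} via an optimal expansion $x-y=\sum_j\gamma_ju_j$ and a termwise application of Lemma \ref{ineqcompl}, the forward uniqueness direction by tracking equality in that chain (termwise equality in Lemma \ref{ineqcompl} plus the forced identity $\sum_{j\le m}\frac{\overline{a_j}}{|a_j|}b_j=\sum_{j>m}|b_j|$), and the converse via the rescaling $y\mapsto ty$ with $t>0$ small are all exactly the paper's steps; your substitution $b_j=\lambda_ja_j$, $\lambda_j\in\mathbb{R}$, is just a more explicit form of the paper's appeal to the equality case of Lemma \ref{ineqcompl} under the constraint $|a_j|\ge t|b_j|$. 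The one place where you genuinely diverge is the necessity of \eqref{ineqcond}. The paper argues variationally: it writes $\frac{\|x-ty\|-\|x\|}{t}\ge 0$, bounds $\|x-ty\|\le\sum_j|a_j-tb_j|$, lets $t\to 0^+$ to get the inequality for $\re\bigl(\sum_{j\le m}\frac{\overline{a_j}}{|a_j|}b_j\bigr)$, and then rotates $y$ by a unimodular scalar to upgrade the real part to a modulus. You instead invoke Lemma \ref{lemfunkcjonal} to produce a norming functional $f$ with $f(x)=\|x\|$ and $f|_Y\equiv 0$, use the dual-norm formula $\|f\|_*=\max_j|f(u_j)|$ for a complex polytope norm, pin down $f(u_j)=\overline{a_j}/|a_j|$ for $j\le m$ from the equality case of the triangle inequality, and then read \eqref{ineqcond} off from $f(y)=0$, obtaining the modulus directly with no rotation step. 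Both routes are sound and handle all representations of $y$ at once; yours is shorter and makes the norming functional explicit (in the spirit of Lemma \ref{lem2strong}), at the price of using the description of the dual ball of a complex polytope norm, whereas the paper's first-order argument is self-contained, needing nothing beyond the triangle inequality and a limit.
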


We note that in the case of $m=N$ (all coordinates of $x$ are non-zero) the inequality (\ref{ineqcond}) reads just as
$$\sum_{j=1}^{N} \frac{\overline{a_j}}{|a_j|} b_j = 0 .$$

\begin{proof}
Let us first assume that $0$ is a best approximation for $x$ in $Y$ and let us fix a non-zero $y=b_1u_1 + \ldots + b_Nu_N \in Y$. For every $t>0$ we have
$$\frac{\|x-ty\| - \|x\|}{t} \geq 0.$$
Since $x-ty = \sum_{j=1}^{N} (a_j - tb_j)u_j$ we have $\|x-ty\| \leq \sum_{j=1}^{N}|a_j - tb_j|$. Therefore
$$\sum_{j=1}^{N} \frac{ |a_j-tb_j| - |a_j|}{t} \geq 0.$$
We note that
$$ \sum_{j=1}^{N} \frac{ |a_j-tb_j| - |a_j|}{t} = \sum_{j=1}^{m} \frac{ |a_j-tb_j| - |a_j|}{t} + \sum_{j=m+1}^{N} |b_j|$$
$$=\sum_{j=1}^{m} \frac{ |a_j-tb_j|^2 - |a_j|^2}{t(|a_j-tb_j| + |a_j|)} + \sum_{j=m+1}^{N} |b_j| = \sum_{j=1}^{m} \frac{ -2\re(\overline{a_j}b_j) + t|b_j|^2}{|a_j-tb_j| + |a_j|} + \sum_{j=m+1}^{N} |b_j| \geq 0.$$
Since $|a_j| \neq 0$ for $1 \leq j \leq m$ we can take the limit with $t \to 0^+$ to get the inequality
$$\re \left ( \sum_{j=1}^{m} \frac{\overline{a_j}}{|a_j|} b_j  \right ) \leq \sum_{j=m+1}^{N} |b_j|.$$
The inequality \eqref{ineqcond} follows now by multiplying $y$ with a scalar $\alpha \in \mathbb{C}$ of modulus $1$ such that the number on the left hand side is a non-negative real number.

Now assuming that the inequality \eqref{ineqcond} holds, we shall establish that $0$ is a best approximation for $x$ in $Y$. Thus, we need to prove that $\|x-y\| \geq \|x\|$ for any $y \in Y$. Let us take a representation
$$x-y = c_1u_1 + \ldots + c_Nu_N,$$
such that $\|x-y\| = |c_1| + \ldots + |c_N|$. Obviously we then have $y = b_1u_1 + \ldots + b_Nu_N$, where $b_j = a_j - c_j$ for $1 \leq j \leq N$ and the condition \eqref{ineqcond} holds for $b_1, \ldots, b_N$. Thus, by Lemma \ref{ineqcompl} we have
$$\|x-y\| = \sum_{j=1}^{N} |c_j| = \sum_{j=1}^{m} |a_j-b_j| + \sum_{j=m+1}^{N} |b_j| \geq \sum_{j=1}^{m} \left ( |a_j| - \frac{\re(\overline{a_j}b_j)} {|a_j|} \right ) + \sum_{j=m+1}^{N} |b_j|$$
$$= \sum_{j=1}^{m} |a_j| + \sum_{j=m+1}^{N} |b_j| - \re \left ( \sum_{j=1}^{m} \frac{\overline{a_j}}{|a_j|} b_j  \right ) \geq \sum_{j=1}^{m} |a_j| = \|x\|. $$
This proves that $0$ is a best approximation for $a$. Moreover, from the estimate above and the equality condition in Lemma \ref{ineqcompl} it follows immediately that if equality holds for some non-zero $y \in Y$, then we must have $\re \left ( \sum_{j=1}^{m} \frac{\overline{a_j}}{|a_j|} b_j  \right ) = \sum_{j=m+1}^{N} |b_j|$ and $\im(\overline{a_j}b_j)=0$ for every $1 \leq j \leq m$. Clearly the first condition can be written also as $ \sum_{j=1}^{m} \frac{\overline{a_j}}{|a_j|} b_j = \sum_{j=m+1}^{N} |b_j|$.

We are left with proving that if the condition \eqref{ineqcond} holds, but there exists non-zero $y =b_1u_1 + \ldots + b_Nu_N \in Y$ satisfying $\sum_{j=1}^{m} \frac{\overline{a_j}}{a_j} b_j = \sum_{j=m+1}^{N} |b_j|$ and $\im(\overline{a_j}b_j)=0$ for every $1 \leq j \leq m$, then $0$ is not a unique best approximation for $x$. We will prove that in this case $\|x-ty\|=\|x\|$ for sufficiently small and real $t>0$. Indeed, we have
$$\|x-ty\| \leq \sum_{j=1}^{N} |a_j - tb_j| = \sum_{j=1}^{m} |a_j - tb_j| + \sum_{j=m+1}^{N} t|b_j|.$$
We note that by the proof of Lemma \ref{ineqcompl} the inequality 
$$|a_j-tb_j| \geq |a_j| - t\frac{\re\left ( \overline{a_j}b_j \right )}{|a_j|}$$
will be an equality when $\im(\overline{a_j}b_j)=0$ and $t>0$ is such that $|a_j| \geq t|b_j|$. We note that such $t>0$ is always possible to choose for every $1 \leq j \leq m$, as in this range we have $|a_j| \neq 0$. Hence for such $t>0$ we have
$$\sum_{j=1}^{m} |a_j - tb_j| + \sum_{j=m+1}^{N} t|b_j| = \sum_{j=1}^{m} \left ( |a_j| - t\frac{\re\left ( \overline{a_j}b_j \right )}{|a_j|} \right ) + t \sum_{j=m+1}^{N} |b_j|$$
$$=\sum_{j=1}^{m} |a_j| - t \sum_{j=1}^{m} \frac{\re\left ( \overline{a_j}b_j \right )}{|a_j|} + t \sum_{j=m+1}^{N} |b_j|$$
$$=\sum_{j=1}^{m} |a_j| - t \sum_{j=1}^{m} \frac{ \overline{a_j}b_j}{|a_j|} + t \sum_{j=m+1}^{N} |b_j| = \sum_{j=1}^{m} |a_j| = \|x\|.$$
Therefore, we have $\|x-ty\| \leq \|x\|$. Because also the opposite inequality is true, it follows that $\|x-ty\|=\|x\|$ and therefore $0$ is not a unique best approximation for $x$. This concludes the proof.
\end{proof}

We do not know if in complex polytope norms a uniqueness of best approximation automatically implies a $2$-strong uniqueness, even for $1$-dimensional subspaces. It seems quite likely that a counterexample like Example \ref{counterexample} exists also in some complex polytope norm. We can prove however, that a counterexample can not be found in the space $\ell_1^n$. In other words, an analogue of Theorem \ref{twrsudolskiwojcik} is true in the complex $\ell_1^n$ space (with a $2$-strong uniqueness). To our best knowledge, such a result was not known previously. This may be quite surprising considering a large volume of research devoted to studying approximation in $L_1$ spaces (see for instance \cite{l1book} for a whole book covering this topic) and the fact that a characterization of an element of best approximation in $\ell_1^n$ is known for a long time. In the case of the $\ell_1^n$ space, the characterization given in Lemma \ref{lemchar} is much more convenient, as every vector has a unique representation in the canonical unit basis. We need first a following technical variant of Lemma \ref{ineqcompl}.

\begin{lem}
\label{ineqcompl2}
Let $x, y \in \mathbb{C}$ be complex numbers such that $x \neq 0$. Then for every real number $t$ satisfying $|ty| \leq \frac{|x|}{2}$  we have
$$|x-ty| \geq |x| - t \frac{\re(\overline{x}y)}{|x|} + t^2 \frac{\im(\overline{x}y)^2}{4|x|^3}.$$
\end{lem}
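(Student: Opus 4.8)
The plan is to reduce the claim to a one–variable real inequality by exploiting the phase invariance of all the quantities involved, and then to settle it by squaring, using the hypothesis $|ty| \leq |x|/2$ to keep the second–order term under control. First I would note that replacing the pair $(x,y)$ by $(e^{i\theta}x, e^{i\theta}y)$ leaves each of $\overline{x}y$, $|x-ty|$, $|x|$ and $|y|$ unchanged, so without loss of generality $x = a$ is a positive real number with $a=|x|$. Writing $y = p+qi$ with $p,q$ real gives $\re(\overline{x}y) = ap$ and $\im(\overline{x}y) = aq$, so the asserted inequality becomes
$$\sqrt{(a-tp)^2 + t^2 q^2} \ \geq \ a - tp + \frac{t^2 q^2}{4a}.$$
Setting $u = a-tp$ and $v = tq$, this is exactly $\sqrt{u^2+v^2} \geq u + \frac{v^2}{4a}$.

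Next I would feed in the hypothesis. Since $|tp| \leq |ty| \leq a/2$ and $|tq| \leq |ty| \leq a/2$, we obtain $a/2 \leq u \leq 3a/2$ and $v^2 \leq a^2/4$. In particular the right–hand side $u + \frac{v^2}{4a}$ is strictly positive (indeed $u \geq a/2 > 0$), so the inequality is equivalent to the one obtained by squaring, which simplifies to
$$v^2\Bigl(1 - \frac{u}{2a} - \frac{v^2}{16a^2}\Bigr) \geq 0.$$
The bounds just derived give $\frac{u}{2a} \leq \frac{3}{4}$ and $\frac{v^2}{16a^2} \leq \frac{1}{64}$, so the bracketed factor is at least $1 - \frac{49}{64} = \frac{15}{64} > 0$; as $v^2 \geq 0$, the inequality follows.

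As for difficulty, the computation is entirely routine once the reduction is in place, and there is no genuine obstacle. The single point demanding care is the legitimacy of squaring, and this is precisely where the hypothesis enters: it guarantees both that the right–hand side remains positive and that the factor $1 - \frac{u}{2a} - \frac{v^2}{16a^2}$ stays bounded away from zero. The constant $\tfrac12$ in the assumption $|ty| \leq |x|/2$ is comfortably more than enough (tracing the estimate with a bound $|ty| \leq c|x|$, any $c < 2\sqrt{6}-4 \approx 0.899$ would suffice), so there is ample slack and no need for any sharp estimate.
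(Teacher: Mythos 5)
Your proof is correct. The paper also proceeds by squaring, but it works directly in the complex variables: after squaring, it isolates the term $\frac{\im(\overline{x}y)^2}{2|x|^2}t^2$ and must then dominate explicit cubic and quartic remainders $At^3+Bt^4$, which is where the hypothesis $|ty|\le \frac{|x|}{2}$ enters (through the two estimates $8|t||x|^3|y|\le 4|x|^4$ and $t^2|x|^2|y|^2\le 4|x|^4$); it also needs a preliminary case split for a possibly negative right-hand side. Your phase normalization $x=a>0$ together with the substitution $u=a-tp$, $v=tq$ is a genuine simplification: the squared inequality collapses into the exact factorization $v^2\bigl(1-\frac{u}{2a}-\frac{v^2}{16a^2}\bigr)\ge 0$, so no remainder terms need to be tracked, and the hypothesis is used only twice --- once to get $u\ge \frac{a}{2}>0$, which makes the squaring an equivalence and removes the paper's case split, and once to bound the bracket below by $\frac{15}{64}$. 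Your route is cleaner and yields as a bonus the sharper observation that any constant $c<2\sqrt{6}-4$ in place of $\frac{1}{2}$ would suffice; the paper's version has only the minor advantage of staying in the notation $\re(\overline{x}y)$, $\im(\overline{x}y)$ that is reused verbatim in the application to Theorem \ref{twrl1}.
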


\begin{proof}
Since the left-hand side is non-negative, there is nothing to prove if the right-hand side is negative. We can therefore square the desired inequality to get
$$|x|^2 - 2t \re(\overline{x}y) + t^2|y|^2 \geq |x|^2 - 2t \re(\overline{x}y) + \left ( \frac{\re(\overline{x}y)^2}{|x|^2} + \frac{\im(\overline{x}y)^2}{2|x|^2} \right ) t^2 + At^3 + Bt^4,$$
where
$$A= - \frac{\re(\overline{x}y)\im(\overline{x}y)^2}{2|x|^4} \quad \text{ and } \quad B=\frac{\im(\overline{x}y)^4}{16|x|^6}.$$
Moreover, we note that 
$$\frac{\re(\overline{x}y)^2}{|x|^2} + \frac{\im(\overline{x}y)^2}{2|x|^2} = \frac{2\re(\overline{x}y)^2 + \im(\overline{x}y)^2}{2|x|^2} = \frac{ 2|xy|^2 - \im(\overline{x}y)^2}{2|x|^2} = |y|^2 - \frac{\im(\overline{x}y)^2}{2|x|^2}.$$
Therefore, the desired inequality is equivalent to
$$\frac{\im(\overline{x}y)^2}{2|x|^2}t^2 \geq At^3 + Bt^4.$$
Since the inequality is obviously true for $t=0$ or $\im(\overline{x}y)=0$ we can simplify it to
$$8|x|^4 \geq -8t \re(\overline{x}y)|x|^2 + t^2 \im(\overline{x}y)^2.$$
Clearly
$$\left | -8t \re(\overline{x}y)|x|^2 + t^2 \im(\overline{x}y)^2 \right | \leq 8|t||x|^3|y| + |t|^2|x|^2|y|^2.$$
Therefore, the desired inequality is true if both inequalities
$$8|t||x|^3|y| \leq 4|x|^4 \quad \text{ and } \quad |t|^2|x|^2|y|^2 \leq 4|x|^4$$
are satisfied and they indeed are when $|ty| \leq \frac{|x|}{2}$. This concludes the proof.
\end{proof}

We shall also need the following lemma, in which we observe that in the finite-dimensional setting, it is enough to establish the property of strong uniqueness locally around $0$. We recall here that $1$-strong uniqueness implies $\alpha$-strong uniqueness for any $\alpha \geq 1$.

\begin{lem}
\label{lemcompact}
Let $X$ be a normed space over $\mathbb{K}$ and let $Y \subseteq X$ be its finite-dimensional linear subspace. If $x \in X \setminus Y$ is a vector such that $0$ is a unique best approximation for $x$, then for every $d>0$ there exists a constant $r_d>0$ such that
$$\|x-y\| \geq \|x\| + r_d\|y\|$$
for every $y \in Y$ with $\|y\| \geq d$.
\end{lem}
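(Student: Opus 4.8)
The plan is to reduce the statement to a compactness argument by isolating the only difficulty, namely that the region $\{y \in Y : \|y\| \ge d\}$ is unbounded. Since $0$ is a \emph{unique} best approximation for $x$, we know that $\|x-y\| > \|x\|$ for every nonzero $y \in Y$; equivalently, the quotient
$$\phi(y) = \frac{\|x-y\| - \|x\|}{\|y\|}$$
is strictly positive for every $y \in Y \setminus \{0\}$. The lemma asserts precisely that $\inf\{\phi(y) : \|y\| \ge d\} > 0$, and I would establish this by splitting the region $\{\|y\| \ge d\}$ into a bounded annulus, handled by compactness, and a far region, handled by the triangle inequality.

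First I would fix a radius $R > \max\{d, 4\|x\|\}$ and treat the far region $\{y \in Y : \|y\| > R\}$. Here the triangle inequality gives $\|x-y\| \ge \|y\| - \|x\|$, so
$$\phi(y) \ge \frac{\|y\| - 2\|x\|}{\|y\|} = 1 - \frac{2\|x\|}{\|y\|} \ge \frac{1}{2},$$
since $\|y\| > R \ge 4\|x\|$. Thus $\phi$ is bounded below by $\frac{1}{2}$ outside a bounded set, and the only remaining task is to bound it below on the annulus $A = \{y \in Y : d \le \|y\| \le R\}$, which is nonempty because $Y \neq \{0\}$.

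Second, I would exploit the finite-dimensionality of $Y$: the annulus $A$ is closed and bounded, hence compact. On $A$ the function $\phi$ is continuous, because the denominator $\|y\|$ is bounded away from $0$ (namely $\|y\| \ge d$) while both $y \mapsto \|x-y\|$ and $y \mapsto \|y\|$ are continuous. A continuous function on a nonempty compact set attains its infimum, so there is $y^* \in A$ with $\phi(y) \ge \phi(y^*)$ for all $y \in A$, and $\phi(y^*) > 0$ by the uniqueness hypothesis applied to the nonzero vector $y^*$. Setting $r_d = \min\{\phi(y^*), \frac{1}{2}\} > 0$ then yields $\phi(y) \ge r_d$, that is $\|x-y\| \ge \|x\| + r_d\|y\|$, for every $y \in Y$ with $\|y\| \ge d$.

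The only genuine obstacle is the unboundedness of the target region, which is precisely why one must peel off the far part by a crude triangle-inequality estimate before invoking compactness on the remaining annulus. The two hypotheses enter in a complementary fashion: uniqueness of the best approximation supplies the pointwise strict positivity $\phi > 0$, while the finite-dimensionality of $Y$ supplies the compactness that upgrades this pointwise positivity to a uniform lower bound on $A$. Note that the argument says nothing about the behaviour of $\phi$ as $\|y\| \to 0$, which is exactly the regime in which the order of strong uniqueness is genuinely decided; this is the point of the remark preceding the lemma, namely that it suffices to verify strong uniqueness in a neighbourhood of $0$.
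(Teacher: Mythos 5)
Your proof is correct and follows essentially the same route as the paper: both split $\{y \in Y : \|y\| \ge d\}$ into a far region $\|y\| \ge 4\|x\|$, where the triangle inequality gives the lower bound $\frac{1}{2}$, and a compact annulus, where the strictly positive continuous function $\frac{\|x-y\|-\|x\|}{\|y\|}$ attains a positive minimum. Your choice of outer radius $R > \max\{d, 4\|x\|\}$ versus the paper's fixed $4\|x\|$ (with the case $d \ge 4\|x\|$ dispatched separately) is only a cosmetic difference.
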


\begin{proof}
We note that if $\| y\| \geq 4\|x\|$, then
$$\|x-y\| \geq \|y\| - \|x\| \geq \|x\| + \frac{\|y\|}{2},$$
so in this case the desired inequality holds with $r_d = \frac{1}{2}$. In particular, we are done if $d \geq 4 \|x\|$. Otherwise, let $B \subseteq Y$ be defined as $B = \{ y \in Y: d \leq \|y\| \leq 4\|x\| \}$. Since $Y$ is finite-dimensional, the set $B$ is compact and hence a strictly positive function
$$\frac{\|x-y\|-\|x\|}{\|y\|}$$
attains its minimum $c_d>0$ on $B$. Taking $r_d$ as $r_d=\min \left \{ \frac{1}{2}, c_d \right \}$ it follows that the desired inequality is satisfied for all $\|y\| \geq d$.
\end{proof}

\begin{twr}
\label{twrl1}
Let $Y \subseteq \ell_{1}^n$ be a linear subspace of the complex $\ell_1^n$ space. Suppose that $y_0 \in Y$ is a unique best approximation in $Y$ for a vector $x \in \ell_{1}^n$. Then $y_0$ is a $2$-strongly unique best approximation for $x$ in $Y$.
\end{twr}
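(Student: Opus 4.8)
The plan is to reduce to the case $y_0 = 0$ by replacing $x$ with $x - y_0$, and then, by Lemma \ref{lemcompact}, to establish the $2$-strong uniqueness inequality only for $y \in Y$ lying in a fixed bounded region away from $0$ — say on the sphere $\|y\| = 1$ — after which a quadratic scaling argument will transfer the estimate to a neighborhood of $0$. Since $\ell_1^n$ has the canonical unit vectors $e_1, \ldots, e_n$ as its essential system of vertices, every vector has a \emph{unique} representation $x = \sum_{j=1}^n a_j e_j$ with $\|x\| = \sum_j |a_j|$, which removes the central difficulty of complex polytope norms (the non-uniqueness of representations) and lets me apply Lemma \ref{lemchar} directly with $b_j$ being the genuine coordinates of $y$.

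First I would fix the index set, assuming $a_j \neq 0$ for $1 \le j \le m$ and $a_j = 0$ for $m+1 \le j \le N = n$. By Lemma \ref{lemchar}, the hypothesis that $0$ is a \emph{unique} best approximation gives, for every nonzero $y = \sum_j b_j e_j \in Y$, either the strict inequality $\bigl|\sum_{j=1}^m \tfrac{\overline{a_j}}{|a_j|} b_j\bigr| < \sum_{j=m+1}^n |b_j|$, or else equality in \eqref{ineqcond} together with $\im(\overline{a_j} b_j) \neq 0$ for at least one $j \le m$. The key computation is to refine the lower bound for $\|x - ty\|$ using the \emph{second-order} estimate of Lemma \ref{ineqcompl2} in place of the first-order estimate of Lemma \ref{ineqcompl}. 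Writing $\|x - ty\| \le \sum_{j=1}^n |a_j - t b_j|$ is not the direction I want; instead I would bound $\|x - ty\|$ from below. To do this, choose a representation $x - ty = \sum_j c_j e_j$ achieving the norm (here $c_j = a_j - t b_j$ by uniqueness), so that $\|x - ty\| = \sum_j |a_j - t b_j| + t\sum_{j=m+1}^n |b_j|$, and for each $j \le m$ with $t$ small enough that $|t b_j| \le |a_j|/2$, apply Lemma \ref{ineqcompl2} to get
$$
\|x - ty\| \ge \sum_{j=1}^m |a_j| - t\,\re\!\left(\sum_{j=1}^m \frac{\overline{a_j}}{|a_j|} b_j\right) + t\sum_{j=m+1}^n |b_j| + t^2 \sum_{j=1}^m \frac{\im(\overline{a_j} b_j)^2}{4|a_j|^3}.
$$

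The main obstacle is handling the two regimes uniformly on the compact sphere $\{y \in Y : \|y\| = 1\}$. In the first regime, where \eqref{ineqcond} is strict, the linear-in-$t$ term $-t\,\re(\cdots) + t\sum_{m+1}^n|b_j|$ is strictly positive and bounded below, giving $\|x-ty\| \ge \|x\| + t\delta$ for some $\delta > 0$ — more than enough. The delicate regime is the second one, where the linear term vanishes (after rotating $y$ so the left side is real and nonnegative, equality forces it to equal $\sum_{m+1}^n|b_j|$), and one must extract strong uniqueness purely from the \emph{quadratic} term $t^2 \sum_{j=1}^m \im(\overline{a_j} b_j)^2/(4|a_j|^3)$, which is strictly positive precisely because uniqueness guarantees some $\im(\overline{a_j} b_j) \neq 0$. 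The work is to show, via a compactness argument on the sphere, that the quantity
$$
\Phi(y) = \max\left\{\ \re\Bigl(\textstyle\sum_{j=1}^m \frac{\overline{a_j}}{|a_j|} b_j\Bigr) - \sum_{j=m+1}^n |b_j|,\ \ \sum_{j=1}^m \frac{\im(\overline{a_j} b_j)^2}{4|a_j|^3}\ \right\}
$$
(with $y$ suitably rotated) is bounded below by a positive constant — this follows because $\Phi$ is continuous, nonnegative, and strictly positive at every point of the sphere by the uniqueness characterization, hence attains a positive minimum $c > 0$. Feeding this back, on the unit sphere I obtain $\|x - ty\| \ge \|x\| + c\,t^2$ for all sufficiently small $t > 0$ (uniformly), which is exactly $2$-strong uniqueness locally; Lemma \ref{lemcompact} then upgrades it to all of $Y$, completing the proof.
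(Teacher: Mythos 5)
Your proposal is correct and essentially reproduces the paper's proof: the reduction to $y_0=0$, the unique coordinate representation feeding Lemma \ref{lemchar}, the dichotomy between a positive linear gap and a positive quadratic term supplied by Lemma \ref{ineqcompl2}, a compactness argument on the unit sphere of $Y$ (where the paper works with the equality set $\mathcal{S}$ and its $\varepsilon$-fattening $\mathcal{S}_\varepsilon$, your single function $\Phi$ does the same job, arguably a bit more cleanly), and Lemma \ref{lemcompact} to globalize. Two small repairs are needed: the first entry of your displayed $\Phi$ has the wrong sign --- it must be $\sum_{j=m+1}^n |b_j| - \re\bigl(\sum_{j=1}^m \tfrac{\overline{a_j}}{|a_j|} b_j\bigr)$, as your own prose one sentence earlier indicates, since as written that entry is always $\leq 0$ by \eqref{ineqcond}, so the stated positivity of $\Phi$ would fail at strict-inequality points with all $\im(\overline{a_j}b_j)=0$. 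Moreover, the rotation of $y$ is unnecessary and would actually endanger continuity of $\Phi$ (a unimodular rotation making the sum nonnegative cannot be chosen continuously where $\sum_{j=1}^m \tfrac{\overline{a_j}}{|a_j|} b_j$ vanishes); the unrotated $\Phi$ is already continuous and strictly positive on the sphere, because $\Phi(y)=0$ forces $\sum_{j=1}^m \tfrac{\overline{a_j}}{|a_j|} b_j$ to be real and equal to $\sum_{j=m+1}^n |b_j|$ with every $\im(\overline{a_j} b_j)=0$, contradicting the uniqueness criterion of Lemma \ref{lemchar}.
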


\begin{proof}

By considering a vector $x - y_0$ instead of $x_0$ we can assume that $y_0=0$. Without loss of generality we may further assume that $x_j \neq 0$ for $1 \leq j \leq m$ (where $m \leq n$) and $x_j=0$ for $m+1 \leq j \leq n$ (possibly $m=n$). We shall establish an inequality
\begin{equation}
\label{l1teza}
\|x-y\|_1 \geq \|x\|_1 + r\|y\|^2_1,
\end{equation}
for every $y \in Y$ and some constant $r>0$. This will be clearly enough, as after taking the square of both sides we shall get
$$\|x-y\|_1^2 \geq \|x\|_1^2 + (2\|x\|_1r)\|y\|^2_1.$$
In order to establish \eqref{l1teza} we shall use Lemma \ref{lemchar}. In a case of the space $\ell_1^n$ a representation of every vector is unique (as the essential system of vertices is just a canonical unit basis) and hence, from the fact that $0$ is a unique best approximation for $x$ in $Y$, it follows that
$$\left | \sum_{j=1}^{m} \frac{\overline{x_j}}{|x_j|} y_j \right | \leq \sum_{j=m+1}^{n} |y_j| \quad \text{ for every } y \in Y$$
and moreover if the equality holds in the inequality above for some $y \neq 0$ then
$$\sum_{j=1}^{m} \im(\overline{x_j}y_j)^2 > 0.$$
Let $\mathcal{S}$ be a subset of the unit sphere $S_Y$ of $Y$ defined as
$$\mathcal{S} = \left \{ y \in S_Y: \  \left | \sum_{j=1}^{m} \frac{\overline{x_j}}{|x_j|} y_j \right | = \sum_{j=m+1}^{n} |y_j| \right \}.$$
We shall consider two cases: if the set $\mathcal{S}$ is empty or not.

Let us first assume that $\mathcal{S}$ is empty. In this case we shall prove that $0$ is actually a $1$-strongly unique best approximation for $x$ in $Y$. From the compactness of the unit sphere there exists a $\varepsilon>0$ such that for every $y \in S_Y$ we have
$$\sum_{j=m+1}^{n} |y_j| \geq \left | \sum_{j=1}^{m} \frac{\overline{x_j}}{|x_j|} y_j \right | + \varepsilon.$$
In that case, from Lemma \eqref{ineqcompl} it follows that for every $y \in Y$ we have
$$\|x-y\|_1 = \sum_{j=1}^{m} |x_j-y_j| + \sum_{j=m+1}^{n} |y_j| \geq \sum_{j=1}^{m} \left ( |x_j| - \frac{\re(\overline{x_j}y_j)}{|x_j|} \right ) + \sum_{j=m+1}^{n} |y_j| $$
$$= \sum_{j=1}^{m} |x_j| + \sum_{j=m+1}^{n} |y_j| - \re \left (\sum_{j=1}^{m} \frac{\overline{x_j}}{|x_j|}y_j \right )\geq \|x\|_1 + \varepsilon \|y\|_1.$$
This shows that $0$ is a $1$-strongly unique best approximation for $x$ in $Y$ with a constant $\varepsilon>0$.

Now let us assume that the set $\mathcal{S}$ is non-empty. Since it is closed and contained in the unit sphere of $Y$, it is a compact set. Therefore, there exists a constant $c>0$ such that for every $y \in \mathcal{S}$ we have
$$\sum_{j=1}^{m} \im(\overline{x_j}y_j)^2 \geq c.$$
Clearly, from a continuity there exists a $\varepsilon>0$ such that if $y \in \mathcal{S}_{\varepsilon}$, where
$$\mathcal{S}_{\varepsilon} = \left \{ y \in S_Y : \ \sum_{j=m+1}^{n} |y_j| \leq \left | \sum_{j=1}^{m} \frac{\overline{x_j}}{|x_j|} y_j \right | + \varepsilon \right \},$$
then
$$\sum_{j=1}^{m} \im(\overline{x_j}y_j)^2 \geq \frac{c}{2}.$$
If $y \in Y$, $y \neq 0$ is such that $\frac{y}{\|y\|_1} \not \in \mathcal{S}_{\varepsilon}$, then reasoning exactly in the same way as in the previous step we get that
$$\|x-y\|_1 \geq \|x\|_1 + \varepsilon\|y\|_1.$$

Let us now assume that $y  \in \mathcal{S}_{\varepsilon}$. Since $\|y\|_1 = 1$ we obviously have $|y_j| \leq 1$ for every $1 \leq j \leq n$. Hence, for a real $t$ such that $0 < t \leq \frac{1}{2} \min \{|x_1|, \ldots, |x_m|\}$ by Lemma \ref{ineqcompl2} we can now estimate
$$\|x-ty\|_1 = \sum_{j=1}^{m} |x_j - ty_j| + \sum_{j=m+1}^{n} |ty_j|$$
$$\geq \sum_{j=1}^{m} \left ( |x_j| -  t \frac{\re(\overline{x_j}y_j)}{|x_j|} + t^2 \frac{\im(\overline{x_j}y_j)^2}{4|x_j|^3} \right ) + |t|\sum_{j=m+1}^{N} |y_j|$$
$$=\|x\|_1 + t \left (\sum_{j=m+1}^{n} |y_j| - \sum_{j=1}^{m} \frac{\re(\overline{x_j}y_j)}{|x_j|} \right ) + t^2\sum_{j=1}^{m}\frac{\im(\overline{x_j}y_j)^2}{4|x_j|^3} \geq \|x\|_1 + \frac{c}{8\|x\|_{\infty}^3}t^2.$$
Altogether, we have obtained $0$ is a $2$-strongly unique best approximation for $x$ in $Y$, when $\|y\|_{1} \leq \frac{1}{2} \min \{|x_1|, \ldots, |x_m|\}$. From Lemma \ref{lemcompact} it follows now that $0$ is a $2$-strongly unique best approximation for $x$ in $Y$ and the conclusion follows.
\end{proof}

\begin{remark}
The proof above implies an equivalent condition for $1$-strong uniqueness of a best approximation in the space $\ell_1^n$, namely if the set $\mathcal{S}$ defined in the proof is empty or not. We note that such condition is for a $1$-strong uniqueness is nothing new as it is known for a long time -- see Theorems $5.2$ and $5.3$ in \cite{survey}. However, we are not aware of any previous results concerned with a $2$-strong uniqueness in the complex $\ell_1^n$ space.
\end{remark}

Below we provide an example showing that a $2$-strong uniqueness can not be improved to an $\alpha$-strong uniqueness for any $\alpha<2$ for some subspaces of the complex $\ell_1^n$ space. This is a rather usual situation in the complex setting. However, later we shall see that for a broad class of subspaces of $\ell_1^n$ the order can actually be improved to a $1$-strong uniqueness (see Theorem \ref{twrl12}).

\begin{example}
Let $n \geq 4$ and let $Y = \lin\{y\} \subseteq \ell_1^n$ be a $1$-dimensional subspace, where $y = (1, -1, i, -i, 0 \ldots, 0)$. Moreover, let us take $x = (1, 1, 1, 1, 0 \ldots, 0)$. Since $y_1 + y_2 + y_3 + y_4 =0$, it follows Lemma \ref{lemchar} that $0$ is a best approximation for $x$ in $Y$. Moreover, since $ty$ is not a real vector for any $t \neq 0$, the condition for uniqueness from that lemma gives us that $0$ is a unique best approximation for $x$ in $Y$. Therefore, it follows from Theorem \ref{twrl1}, that $0$ is a $2$-strongly unique best approximation for $x$. We shall prove that it is not an $\alpha$-strongly unique best approximation for any $\alpha<2$. Indeed, if that would be the case, then for any real $t>0$ we would have
$$\|x-ty\|^{\alpha}_1 \geq \|x\|_1^{\alpha} + rt^{\alpha}$$
for some constant $r>0$. This rewrites as
$$\left ( |1-t| + |1+t| + |1-ti| + |1+ti| \right )^{\alpha} \geq 4^{\alpha} + rt^{\alpha}$$
or assuming that $ t \in (0, 1)$
$$\frac{\left (2 + 2\sqrt{1+t^2} \right )^{\alpha} - 4^{\alpha}}{t^{\alpha}} \geq r,$$
which is equivalent to
$$\frac{\left (1 + \sqrt{1+t^2} \right )^{\alpha} - 2^{\alpha}}{t^{\alpha}} \geq \frac{r}{2^{\alpha}}.$$
To prove that this inequality is impossible for small $t>0$, we shall calculate the limit of the left-hand side with $t \to 0^{+}$ using the L'Hospital's rule. Indeed, since $\alpha<2$ we have
$$\lim_{t \to 0^{+}} \frac{\left (1 + \sqrt{1+t^2} \right )^{\alpha} - 2^{\alpha}}{t^{\alpha}} = \lim_{t \to 0^{+}} \frac{t}{\sqrt{t^2+1}} \cdot \frac{\left (1 + \sqrt{1+t^2} \right )^{\alpha-1}}{t^{\alpha-1}}$$
$$=\lim_{t \to 0^{+}} \frac{t^{2-\alpha}}{\sqrt{t^2+1}} \cdot \left (1 + \sqrt{1+t^2} \right )^{\alpha-1} = 0.$$
This gives us a desired contradiction and in conclusion $0$ is not an $\alpha$-strongly unique best approximation for any $\alpha<2$.
\end{example}

\section{$2$-strong uniqueness of a best approximation in real subspaces}
\label{sectionreal}

In the previous section we saw that a property like in Theorem \ref{twrsudolskiwojcik} for real polytope norms does not transfer to adjoint complex polytope norms (Example \ref{counterexample}). While the situation is less clear for complex polytope norms, it is quite possible that a similar counterexample could be found. It is therefore natural to ask about some additional conditions, which would guarantee that a unique best approximation is automatically $2$-strongly unique for complex polytope norms or for their adjoints. In this section we provide such a condition, that holds for a broad class of complex polytope or adjoint complex polytope spaces and its subspaces. We shall call a linear subspace $V \subseteq \mathbb{C}^n$ a \emph{real} subspace if it has a basis consisting of real vectors. Our goal is to prove that if a complex polytope norm has an essential system of vertices consisting of real vectors or an adjoint complex polytope norm has an essential system of facets consisting of real functionals, then a property like in Theorem \ref{twrsudolskiwojcik} holds true for all real subspaces. Therefore, what we shall consider here is, in a certain sense, an intermediate case between the real and complex settings.

By a real functional we shall always mean a functional, which transforms real vectors to real scalars, or equivalently, that it has real coordinates when treated as a vector. We start with some basic algebraic properties of the real subspaces. For a vector $v=(v_1, \ldots, v_n) \in \mathbb{C}^n$ by $\re(v)$ we shall always understand the vector $(\re(v_1), \ldots, \re(v_n)) \in \mathbb{R}^n$ and similarly $\im(v) = (\im(v_1), \ldots, \im(v_n)) \in \mathbb{R}^n$.

\begin{lem}
\label{realcond}
Let $V \subseteq \mathbb{C}^n$ be a linear subspace of a dimension $1 \leq d \leq n$. Then the following conditions are equivalent:
\begin{enumerate}
\item $V$ is a real subspace,
\item There exist vectors $f_1, \ldots, f_{n-d} \in \mathbb{R}^n$ such that $V = \{ v \in \mathbb{C}^{n}: \ \langle v, f_j \rangle = 0 \text { for every } 1 \leq j \leq n-d\}$.
\item For every $v \in V$ we have $\re(v) \in V$.
\end{enumerate}

\end{lem}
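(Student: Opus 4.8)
The plan is to establish the cyclic chain $(1)\Rightarrow(2)\Rightarrow(3)\Rightarrow(1)$. Before starting I would record one elementary but essential observation: a family of vectors lying in $\mathbb{R}^n$ is linearly independent over $\mathbb{C}$ (viewed inside $\mathbb{C}^n$) if and only if it is linearly independent over $\mathbb{R}$. This is seen by splitting any complex relation $\sum_k c_k v_k=0$ with $c_k=a_k+ib_k$ into $\sum_k a_k v_k=0$ and $\sum_k b_k v_k=0$, which are genuine real relations since each $v_k$ is real. This equivalence will let me transfer real dimension counts to complex ones and back.

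For $(1)\Rightarrow(2)$ I would start from a real basis $v_1,\dots,v_d\in\mathbb{R}^n$ of $V$ and let $W=\lin_{\mathbb{R}}\{v_1,\dots,v_d\}\subseteq\mathbb{R}^n$, a $d$-dimensional real subspace by the observation above. Choosing a real basis $f_1,\dots,f_{n-d}$ of the real orthogonal complement $W^{\perp}\subseteq\mathbb{R}^n$, every $v\in V$ satisfies $\langle v,f_j\rangle=0$, because this holds on the real spanning vectors $v_i$ and $\langle\cdot,f_j\rangle$ is $\mathbb{C}$-linear in the first argument. Conversely, the solution set $\{v\in\mathbb{C}^n:\langle v,f_j\rangle=0 \text{ for all } j\}$ is a complex subspace of dimension exactly $d$, since the $f_j$ are $\mathbb{C}$-linearly independent by the preliminary remark; as it contains the $d$-dimensional $V$, the two coincide, giving (2). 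The implication $(2)\Rightarrow(3)$ is then immediate: if $\langle v,f_j\rangle=\sum_k v_k f_{j,k}=0$ and all $f_{j,k}$ are real, then taking real parts yields $\langle\re(v),f_j\rangle=\sum_k\re(v_k)f_{j,k}=0$ for every $j$, so $\re(v)\in V$.

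For $(3)\Rightarrow(1)$, the key point is to upgrade the hypothesis so that $\im(v)\in V$ as well: since $V$ is a complex subspace, $iv\in V$, and because $\re(iv)=-\im(v)$, condition (3) applied to $iv$ gives $\im(v)=-\re(iv)\in V$. Consequently, for any $\mathbb{C}$-basis $w_1,\dots,w_d$ of $V$ the real vectors $\re(w_1),\im(w_1),\dots,\re(w_d),\im(w_d)$ all lie in $V$ and span it over $\mathbb{C}$, as each $w_k=\re(w_k)+i\,\im(w_k)$; extracting a basis from this finite real spanning set produces a real basis of $V$, which is exactly (1). The only place demanding genuine care is the dimension bookkeeping in $(1)\Rightarrow(2)$, where one must not conflate real and complex independence; the preliminary observation is precisely what removes this difficulty, so I expect no serious obstacle beyond keeping that distinction straight.
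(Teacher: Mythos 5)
Your proof is correct and follows essentially the same route as the paper's: a real basis together with a real orthogonal complement for $(1)\Rightarrow(2)$, comparison of real parts for $(2)\Rightarrow(3)$, and applying $(3)$ to $iv$ to obtain $\im(v)\in V$ and extract a real basis for $(3)\Rightarrow(1)$. If anything, you are slightly more thorough: your dimension count establishing the reverse inclusion $\{v:\langle v,f_j\rangle=0\ \forall j\}\subseteq V$ in $(1)\Rightarrow(2)$ is left implicit in the paper's argument.
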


\vskip 0.2in

\begin{proof}
We first note that the first condition implies the second one. Indeed, if $v_1, \ldots, v_d \in V$ is a real basis of $V$, then we can consider a real subspace spanned by vectors $v_j$ in $\mathbb{R}^n$ and find a basis $f_1, \ldots, f_{n-d} \in \mathbb{R}^n$ of an orthogonal complement. Then $\langle v_k, f_j \rangle = 0$ for any $j, k$. Now if $v \in V$ is in the form $v = a_1v_1 + \ldots + a_dv_d$ for some scalars $a_j \in \mathbb{C}$ then by the linearity on the first coordinate of an inner product we immediately get that $\langle v, f_j \rangle = 0$ for any $1 \leq j \leq n-d$ .

Now we shall prove that the second condition implies the third. Indeed if $v \in V$, then by the assumption $\langle v, f_j \rangle = 0 $ for any $1 \leq j \leq n-d$. However, by comparing the real parts of this equality we get $\langle \re(v), f_j \rangle = 0$, so that also $\re(v) \in V$.

We are left with with proving that the last condition implies the first one. Let $v_1, \ldots, v_d \in \mathbb{C}^n$ be any basis of $V$. Then by assumption we have that $u_j=\re(v_j) \in V$ and $w_j=\re(-i v_j) = \im(v_j) \in V$ for any $1 \leq j \leq d$. However, the linear span over $\mathbb{C}$ of $u_1, \ldots, u_d, w_1, \ldots, w_d$ clearly contains $V$ as $v_j = u_j + i w_j$ for any $1 \leq j \leq d$. Therefore, from these $2d$ real vectors we can choose a basis of $V$ and the conclusion follows.

\end{proof}

In the lemma below we establish a property like in Theorem \ref{twrsudolskiwojcik}, but in a quite particular setting.

\begin{lem}
\label{lem1}
Let $Y \subseteq \ell_{\infty}^n$ be a real subspace of a complex $\ell_{\infty}^n$ space. Suppose that $0$ is a unique best approximation in $Y$ for a vector $x=(1, \ldots, 1) \in \ell_{\infty}^n$. Then $0$ is a $2$-strongly unique best approximation for $x$ in $Y$.
\end{lem}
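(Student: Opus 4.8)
The plan is to exploit that the $\ell_\infty^n$ norm is computed by the coordinate functionals $f_j(z)=z_j$ and that $x=(1,\dots,1)$ attains $\|x\|_\infty=1$ at every coordinate. Writing $y=p+iq$ with $p=\re(y)$ and $q=\im(y)$, a direct expansion gives
$$\|x-y\|_\infty^2 = \max_{1\le j\le n}|1-y_j|^2 = 1 + \max_{1\le j\le n}\bigl(-2p_j + p_j^2 + q_j^2\bigr).$$
Since $Y$ is a real subspace, Lemma \ref{realcond} guarantees that $p=\re(y)$ and $q=\im(y)=\re(-iy)$ again lie in $Y$, hence in $Y_{\mathbb R}:=Y\cap\mathbb R^n$. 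Setting $F(y)=\max_j(-2p_j+p_j^2+q_j^2)$, so that $\|x-y\|_\infty^2=\|x\|_\infty^2+F(y)$, the claim reduces to producing a constant $r>0$ with $F(y)\ge r\|y\|_\infty^2$ for all $y\in Y$.

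First I would pass to the associated real problem. Because $Y_{\mathbb R}\subseteq Y$ and the complex and real $\ell_\infty^n$ norms agree on real vectors, restricting the complex uniqueness hypothesis immediately shows that $0$ is the unique best approximation of $x$ in $Y_{\mathbb R}$ inside the real space $(\mathbb R^n,\|\cdot\|_\infty)$ (note $x\notin Y_{\mathbb R}$ since $x\neq 0$). Applying the result of Sudolski and W\'ojcik, Theorem \ref{twrsudolskiwojcik}, to this real polytope norm yields $\rho>0$ with $\|x-p\|_\infty\ge 1+\rho\|p\|_\infty$ for every $p\in Y_{\mathbb R}$; squaring and rearranging gives the first key bound
$$F(y)\ \ge\ \max_j\bigl(-2p_j+p_j^2\bigr)\ \ge\ 2\rho\|p\|_\infty,$$
which I call (I).

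The decisive step is to extract a genuinely quadratic gain from the imaginary part. Evaluating $F$ at a coordinate $l$ realizing $\|q\|_\infty=|q_l|$ and using $-2p_l+p_l^2\ge -2\|p\|_\infty$ produces the second bound
$$F(y)\ \ge\ -2\|p\|_\infty+\|q\|_\infty^2,$$
which I call (II). The term $-2\|p\|_\infty$ has the wrong sign, but forming the convex combination of (I) and (II) with weight $\lambda=1/(1+\rho)$ cancels the $\|p\|_\infty$ contributions exactly and leaves
$$F(y)\ \ge\ \frac{\rho}{1+\rho}\,\|q\|_\infty^2.$$
I expect this cancellation — balancing a degree-one lower bound coming from negative real parts against a degree-two bound coming from imaginary parts — to be the main obstacle and the heart of the argument.

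It remains to assemble a uniform degree-two estimate. By Lemma \ref{lemcompact} the inequality $\|x-y\|_\infty\ge 1+r_1\|y\|_\infty$ holds for all $\|y\|_\infty\ge 1$, which after squaring already yields $2$-strong uniqueness on that range. For $\|y\|_\infty\le 1$ one has $\|p\|_\infty\le 1$, so (I) upgrades to $F(y)\ge 2\rho\|p\|_\infty^2$; adding this to the previous quadratic bound and using $\|y\|_\infty^2\le\|p\|_\infty^2+\|q\|_\infty^2$ gives $F(y)\ge r_0\|y\|_\infty^2$ for a suitable $r_0>0$. Combining the two ranges and recalling $\|x-y\|_\infty^2=\|x\|_\infty^2+F(y)$ delivers the desired $2$-strong uniqueness with $r=\min\{r_0,r_1^2\}$.
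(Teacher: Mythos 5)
Your proof is correct and follows essentially the same route as the paper's: both reduce to the real problem by applying Theorem \ref{twrsudolskiwojcik} to real parts (legitimate since $Y$ being real gives $\re(y)\in Y$ via Lemma \ref{realcond}), and both then extract the quadratic gain from the imaginary part at a coordinate realizing the relevant sup-norm. The only differences are bookkeeping: where you cancel the $\|p\|_\infty$-terms through a convex combination of your bounds (I) and (II) and invoke Lemma \ref{lemcompact} for $\|y\|_\infty\ge 1$, the paper runs a direct two-case split (according to whether $2\|\re(y)\|_\infty+r_1\|\re(y)\|_\infty^2\ge \tfrac{r_1}{2}\|y\|_\infty^2$) that handles all $y\in Y$ at once without the compactness lemma.
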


\begin{proof}
We need to show that there exists a constant $r>0$ such that for any $y \in Y$ we have
\begin{equation}
\label{teza2strong}
\|x-y\|^2_{\infty} \geq 1 + r \|y\|_{\infty}^2.
\end{equation}

Since $Y$ is a real subspace, we have $\re(y)$ for $y \in Y$. In particular, since $0$ is also a unique best approximation for $x$ in a linear subspace of real parts of vectors from $Y$ (considered then as a subspace of the real space $\ell_{\infty}^n$), the zero vector is $1$-strongly unique for $x$ in this case by Theorem \ref{twrsudolskiwojcik}. Hence, there exists a constant $r_1 \in \left (0, 1 \right )$ such that for any $y \in Y$ we have
$$\|x-y\|_{\infty} \geq \|x-\re(y)\|_{\infty} \geq 1+ r_1\|\re(y)\|_{\infty},$$
so that
\begin{equation}
\label{ineq2strong}
\|x-y\|_{\infty}^2 \geq 1 + 2r_1\|\re(y)\|_{\infty}+ r_1^2\|\re(y)\|_{\infty}^2.
\end{equation}

Now, if $y \in Y$ is a vector such that
$$2 \|\re(y)\|_{\infty} + r_1 \|\re(y)\|_{\infty}^2 \geq \frac{r_1}{2} \|y\|_{\infty}^2,$$
then by \eqref{ineq2strong} the inequality \eqref{teza2strong} holds with $r = \frac{r_1^2}{2}$. Thus, let us suppose that $y \in Y$ does not satisfy the condition above. In particular we have
$$2 \|\re(y)\|_{\infty} \leq \frac{r_1}{2} \|y\|_{\infty}^2\quad \text{ and } \quad \|\re(y)\|_{\infty}^2 \leq \frac{1}{2} \|y\|_{\infty}^2.$$
Assume that $\|y\|_{\infty}=|y_j|=|a+bi|$ for some $1 \leq j \leq n$ and $a, b \in \mathbb{R}$. In particular
$$2|a| \leq 2 \|\re(y)\|_{\infty} \leq \frac{r_1}{2} \|y\|_{\infty}^2$$
and
$$a^2=|\re(y_j)|_{\infty}^2 \leq \|\re(y)\|_{\infty}^2 \leq \frac{1}{2} \|y\|_{\infty}^2 = \frac{a^2+b^2}{2},$$
which gives us $b^2 \geq a^2$ or equivalently $b^2 \geq \frac{1}{2} \|y\|_{\infty}^2$. Therefore
$$\|x-y\|_{\infty}^2 \geq |1-y_j|^2= |(1-a) - bi|^2 = (1-a)^2 + b^2 = 1 - 2a + a^2 + b^2$$
$$\geq 1 - 2|a| + b^2 \geq 1 - \frac{r_1}{2} \|y\|_{\infty}^2 + \frac{1}{2} \|y\|_{\infty}^2 = 1 + \frac{1-r_1}{2} \|y\|_{\infty}^2.$$
Altogether, we see that \eqref{teza2strong} holds for all $y \in Y$ with $r = \min \left ( \frac{r_1^2}{2}, \frac{1-r_1}{2} \right )$ and the proof is finished.
\end{proof}

In the following theorem we establish a $2$-strong uniqueness of a best approximation (like in Thereom \ref{twrsudolskiwojcik}) in a setting that is more general than just real subspaces of adjoint complex polytope spaces with a real essential system of facets, although this situation is an obvious example where all of the conditions are met.

\begin{twr}
\label{twr2strgeneral}
Let $X$ be a normed space over $\mathbb{C}$ and let $Y \subseteq X$ be its finite-dimensional subspace. Suppose that $x \in X \setminus Y$ is such a vector that $0$ is its unique best approximation in $Y$ and a set
$$
E(x) = \{ f \in \ext(S_{X^*}): f(x) = \|x\| \}
$$
is a finite set consisting of $N$ functionals $f_1, \ldots, f_N \in S_{X^*}$. Let $0 \leq M \leq \|x\|$ be defined as
$$
M= sup\{|f(x)|: f \in \ext(S_{X^*} )\setminus |E|(x) \},
$$
where $ |E|(x) = \{ f \in \ext(S_{X^*}): |f(x)| = \|x\| \}$ (if $\ext(S_{X^*})\setminus |E|(x)$ is an empty set, then we take $M=0$). Let $Z \subseteq X$ be a linear subspace given as $Z=  \lin(Y \cup \{x\}).$ Assume that a linear subspace $F(Z) \subseteq \mathbb{C}^N$ defined as
$$ 
F(Z) = \{ (f_1(z), \ldots ,f_N(z)): \: z \in Z\}
$$
is a real subspace of $\mathbb{C}^N$. If an inequality $M < \|x\|$ holds, then $0$ is a $2$-strongly unique best approximation for $x$ in $Y$.
\end{twr}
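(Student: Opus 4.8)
The plan is to push the entire problem, through the finite family $f_1,\dots,f_N$ of $E(x)$, into $\ell_\infty^N$, where the conclusion will follow from Lemma \ref{lem1}. I set $T\colon X\to\mathbb{C}^N$, $T(z)=(f_1(z),\dots,f_N(z))$, so that $T(x)=\|x\|(1,\dots,1)$ and $\|T(z)\|_\infty\le\|z\|$ for every $z$; I write $W=T(Y)$, and by hypothesis $F(Z)=T(Z)=W+\mathbb{C}\,T(x)$ is a real subspace of $\mathbb{C}^N$. The first step is a local identification of the norm. Since multiplication by a unit scalar is an isometry of $X^*$, the set of $f\in\ext(S_{X^*})$ with $|f(x)|=\|x\|$ is exactly $\{\lambda f_j:|\lambda|=1,\ 1\le j\le N\}$, so that $\sup\{|f(x-y)|:|f(x)|=\|x\|\}=\max_{j}|f_j(x-y)|$; on the remaining extreme points $|f(x-y)|\le M+\|y\|$, which is below $\|x\|\le\|x-y\|$ as soon as $\|y\|<\|x\|-M$. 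Hence for all sufficiently small $y$,
\[
\|x-y\|=\max_{1\le j\le N}|f_j(x-y)|=\|T(x)-T(y)\|_\infty .
\]
This bridge is where the hypothesis $M<\|x\|$ is used.

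From this identity I would draw two consequences. If $T(y)=0$ then $\|x-ty\|=\|x\|$ for small $t>0$, contradicting uniqueness unless $y=0$; thus $T|_Y$ is injective and, $Y$ being finite-dimensional, bounded below, $\|T(y)\|_\infty\ge c\|y\|$ for some $c>0$. A convexity argument then transfers uniqueness: if some $w=T(y)\ne0$ satisfied $\|T(x)-w\|_\infty\le\|x\|$, then for small $\theta>0$ the vector $\theta y$ is small and $\|x-\theta y\|=\|T(x)-\theta w\|_\infty\le\|x\|$ by the displayed identity and convexity, contradicting the uniqueness of $0$ for $x$ in $Y$. Therefore $0$ is the unique best approximation of $(1,\dots,1)$ in $W$ within $\ell_\infty^N$.

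The decisive step is to show that $W$ is a real subspace, so that Lemma \ref{lem1} becomes available. Here I would invoke the dual description of a best approximation: by Lemma \ref{lemfunkcjonal} applied in $\ell_\infty^N$, the fact that $0$ best approximates $(1,\dots,1)$ in $W$ furnishes a norming functional $\phi=(\phi_1,\dots,\phi_N)\in\ell_1^N$ with $\sum_j|\phi_j|=1=\sum_j\phi_j$ and $\sum_j\phi_j w_j=0$ for all $w\in W$. Equality in the triangle inequality forces $\phi_j\ge0$, so $\phi$ has real entries. Consequently, for $w\in W$ we have $\overline{w}\in F(Z)$ (as $F(Z)$ is real) and $\sum_j\phi_j\overline{w_j}=\overline{\sum_j\phi_j w_j}=0$, so $\overline{w}\in W$; thus $W$ is closed under conjugation and hence real by Lemma \ref{realcond}. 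This real-functional observation is the crux of the proof and precisely where the assumption on $F(Z)$ enters.

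It remains to assemble the pieces. Applying Lemma \ref{lem1} (rescaled by $\|x\|$) to the real subspace $W$ and the vector $(1,\dots,1)$ yields $r'>0$ with $\|T(x)-w\|_\infty^2\ge\|x\|^2+r'\|w\|_\infty^2$ for all $w\in W$. Pulling this back through the norm identity and the lower bound $\|T(y)\|_\infty\ge c\|y\|$ gives $\|x-y\|^2\ge\|x\|^2+r'c^2\|y\|^2$ for all small $y$, and Lemma \ref{lemcompact} upgrades it to all of $Y$. I expect the main obstacle to be the reduction to the real setting: securing the local norm identity (which requires $M<\|x\|$) and, above all, proving that $W$ is real through the real norming functional $\phi$.
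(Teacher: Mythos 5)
Your proposal is correct, and its skeleton is the same as the paper's: both proofs funnel everything through $T(z)=(f_1(z),\ldots,f_N(z))$ into $\ell_\infty^N$ and finish with Lemma \ref{lem1}, with the hypothesis $M<\|x\|$ guaranteeing that the finite family $f_1,\ldots,f_N$ controls the norm near $x$. The implementation differs in two places. First, where you prove the local identity $\|x-y\|=\|T(x)-T(y)\|_\infty$ for $\|y\|<\|x\|-M$ and globalize at the end with Lemma \ref{lemcompact}, the paper instead introduces $\|z\|_0=\max_{1\le j\le N}|f_j(z)|$ on all of $Z$, shows it is a norm there (via the trick of choosing $t$ with $|1-at|>1$), shows that uniqueness of $0$ survives the passage to $\|\cdot\|_0$ (the coordinatewise analogue of your convexity argument, using $|1-a|\le 1\Rightarrow|1-ta|\le 1$ for $t\in[0,1]$), and then replaces Lemma \ref{lemcompact} by the equivalence of $\|\cdot\|$ and $\|\cdot\|_0$ on the finite-dimensional $Z$; the two routes cost about the same. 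Second --- and this is where your proposal genuinely adds something --- you noticed that Lemma \ref{lem1} must be applied to the subspace $W=T(Y)$, whereas the hypothesis only makes $T(Z)=W+\mathbb{C}T(x)$ real, and a subspace of a real subspace need not be real (e.g. $\lin\{(1,i)\}\subseteq\mathbb{C}^2$). The paper passes over this point, asserting that the image is real ``by the assumption''; your argument via the norming functional $\phi\in\ell_1^N$ with $\sum_j\phi_j=\sum_j|\phi_j|=1$, hence $\phi_j\ge 0$, which shows that $W$ is closed under coordinatewise conjugation and therefore real by Lemma \ref{realcond}, closes exactly that step and is the right way to exploit the hypothesis on $F(Z)$. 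One detail you should spell out: the inference ``$\overline{w}\in F(Z)$ and $\phi(\overline{w})=0$, so $\overline{w}\in W$'' tacitly uses $W=F(Z)\cap\ker\phi$. This requires knowing $T(x)\notin W$ (your uniqueness-transfer argument already yields this: $T(x)=T(y)$ with $y\in Y$ would give $\|T(x)-T(y)\|_\infty=0\le\|x\|$ with $y\ne 0$, a contradiction), whence $\dim F(Z)=\dim W+1$, while $\ker\phi\cap F(Z)$ is a hyperplane of $F(Z)$ containing $W$; the three facts force equality. With that one line added, your proof is complete.
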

\begin{proof}
We can assume that $\|x\|=1$, so that $M<1$. For $z \in Z$ let us define
$$\|z\|_0 = \max_{1 \leq j \leq N} |f_j(z)|.$$
We observe that $\|x\|_0=\|x\|=1$ and $\|z\| \geq \|z\|_0$ for any $z \in Z$. Moreover, we claim that $\| \cdot \|_0$ is in fact a norm on $Z$. Indeed, it is enough to check that $\|z\|_0 \neq 0$ for $z \neq 0$. Let us thus assume that there exists a non-zero vector $z \in Z$ such that for all $1 \leq j \leq N$ we have $f_j(z)=0$. We can write $z=ax+by$ for some $a, b \in \mathbb{C}$, which are not both zero and $y \in Y$. Clearly we must have $b \neq 0$. We note that since $M < 1$, if $t$ is a complex number with $|t|$ sufficiently small we have $|1-at|M + |b||t|\|y\| < 1$. In particular, for any $f \in \ext(S_{X^*})\setminus |E|(x)$ we have then
$$|f(x-tz)|=|f((1-at)x-by)| \leq |1-at|M + |b||t|\|y\| < 1.$$
Moreover, $|f_j(x-tz)|=|f_j(x)|=1=\|x\|$ for any $1 \leq j \leq N$. This means that $\|x-tz\| = \|(1-at)x - tby\|= 1$ for any $t \in \mathbb{C}$ with $|t|$ sufficiently small. However, it is clear that since $0$ is a unique best approximation for a vector $x$ in $Y$, then $0$ is also a unique best approximation for a vector $(1-at)x$ in $Y$ (which is of norm $|1-at|$). In particular, since $b \neq 0$ the element $tby$ is not a best approximation for $(1-at)x$ and it follows that $|1-at| < 1$. On the other hand, for any fixed complex number $a$, we can find a complex number $t$ with $|t|$ arbitrarily small and satisfying $|1-at|>1$. This gives us a contradiction, which proves that $\| \cdot \|_0$ is indeed a norm on $Z$.

By arguing in a similar way, we can prove that $0$ is still a unique best approximation for $x$ in $Y$, when the norm $\| \cdot \|_0$ is considered. Indeed, let us assume that for some non-zero $y \in Y$ we have $\|x-y\|_0 \leq \|x\|_0=\|x\|=1$. We note that if an inequality $\left | 1 - a \right | \leq 1$ holds for some $a \in \mathbb{C}$, then also $\left | 1 - a t \right | \leq 1$ for every real $t \in [0, 1]$. In particular, since for every $1 \leq j \leq N$ by assumption we have $|f_j(x-y)| \leq 1$, for any real $t \in [0, 1]$ we also have $|f_j(x-ty)| \leq 1$. If we now take $t>0$ such that $M+t\|y\|<1$ then again $|f(x-ty)| < 1$ for any $f \in \ext(S_{X^*})\setminus |E|(x)$. This means that $\|x-ty\|=1$ for $t>0$ sufficiently small and we obtain a contradiction with the fact that $0$ is a unique best approximation for $x$ in $Y$, when the norm $\| \cdot \|$ is considered.

Now let us consider a mapping $T: Z \to \ell_{\infty}^N$ defined as
$$T(z)=(f_1(z), \ldots f_N(z))$$
for $z \in Z$. It is clear that $T$ is a linear isometry between $(Z, \| \cdot \|_1)$ and a subspace of the complex $\ell_{\infty}^N$ space (considered with the standard norm), which by the assumption is a real subspace. Clearly $T(x)=(1, \ldots, 1)$, so by Lemma \ref{lem1} there exists a constant $r>0$ such that
$$\|x-y\|^2_0 = \|T(x)-T(y)\|^2_{\infty} \geq 1 + r\|T(y)\|_{\infty}^2 = 1 + r\|y\|^2_0.$$
It remains to observe that, since $Z$ is finite-dimensional, the norms $\| \cdot \|$ and $\| \cdot \|_0$ are equivalent, so that we have $\|z\|_0 \geq c \|z\|$ for some constant $c>0$ and all $z \in Z$. In particular
$$\|x-y\|^2 \geq \|x-y\|^2_0 \geq 1 + r\|y\|^2_0 \geq 1 + cr \|y\|^2$$
This shows that $0$ is $2$-strongly unique best approximation for $x$ in $Y$ with the constant $cr>0$ and the conclusion follows.
\end{proof} 

Below we provide a simple example illustrating that Theorem \ref{twr2strgeneral} can be applied also for different situations, than just for the adjoint complex polytope norms.

\begin{example}
Let $\mathbb{D} \subseteq \mathbb{C}$ be the unit disc and let $K=\mathbb{D} \cup \{1+i, 1-i, -1+i, -1-i\} \subseteq \mathbb{C}.$ In the space $C(K)$ of complex valued continuous functions equipped with the supremum norm, let $Y$ be a three-dimensional linear subspace spanned by the functions $1, \re(z), \im(z) \in C(K)$. Let $f \in C(K)$ be a real-valued function defined as $f(z) = \re(z) \cdot \im(z)$. In this case we have $\|f\|_K=1$ and $f$ attains its maximum exactly at the points of the form $\pm 1 \pm i$, while $|f(z)| \leq \frac{1}{2}$ for $z \in \mathbb{D}$ (so that $M=\frac{1}{2}$ using the notation of Theorem \ref{twr2strgeneral}). It is easy to verify that $0 \in Y$ is a unique best approximation for $f$ in $Y$, as $\max\{|f(z)-g(z)|: \ z = \pm 1 \pm i \}>1$ for any for non-zero $g \in Y$. Hence, by Theorem \ref{twr2strgeneral}, the zero function is a $2$-strongly unique best approximation for $f$.

\end{example}

The following lemma gives a simple but important property of complex polytope and adjoint complex polytope norms with a real essential system of vertices or a real essential system of facets. 

\begin{lem}
\label{lemreal}
Let $X=(\mathbb{C}^n, \| \cdot \|)$ be a complex normed space with a norm, which is either a complex polytope norm with a real essential system of vertices or an adjoint complex polytope norm which with a real essential system of facets. Then for any two vectors $x, y \in \mathbb{R}^n$ we have $\|x+iy\| \geq \|x\|$. 
\end{lem}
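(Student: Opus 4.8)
The plan is to treat the two cases of the hypothesis separately, since the norm is described by completely different data in each, but in both cases the single elementary fact doing all the work is the inequality $|\alpha + i\beta| = \sqrt{\alpha^2+\beta^2} \geq |\alpha|$ for real numbers $\alpha, \beta$, together with the observation that the real structure of the defining data (vertices or facets) lets me split a complex object into its real and imaginary parts cleanly.

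First I would dispose of the adjoint case, which is the shorter of the two. Here $\|z\| = \max_{1 \leq j \leq N} |f_j(z)|$ for real functionals $f_1, \ldots, f_N$, meaning each $f_j$ sends real vectors to real scalars. For $x, y \in \mathbb{R}^n$ the quantities $f_j(x)$ and $f_j(y)$ are real, so $f_j(x+iy) = f_j(x) + i f_j(y)$ has $|f_j(x+iy)| = \sqrt{f_j(x)^2 + f_j(y)^2} \geq |f_j(x)|$. Taking the maximum over $j$ on both sides gives $\|x+iy\| = \max_j |f_j(x+iy)| \geq \max_j |f_j(x)| = \|x\|$, as required.

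Next I would handle the complex polytope case, where $B_X = \absconv\{u_1, \ldots, u_N\}$ with all $u_j \in \mathbb{R}^n$, and the norm is the infimal value of $\sum_j |a_j|$ over all complex representations $x+iy = \sum_j a_j u_j$. I would take an arbitrary such representation and write each coefficient as $a_j = \alpha_j + i\beta_j$ with $\alpha_j, \beta_j \in \mathbb{R}$. Because the $u_j$ are real, comparing real and imaginary parts of $x + iy = \sum_j (\alpha_j + i\beta_j)u_j$ forces $x = \sum_j \alpha_j u_j$ (and $y = \sum_j \beta_j u_j$, which I will not even need). Thus the real parts $\alpha_j$ furnish a \emph{valid} representation of $x$, so $\|x\| \leq \sum_j |\alpha_j|$ by definition of the norm. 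Combining this with $|a_j| = \sqrt{\alpha_j^2 + \beta_j^2} \geq |\alpha_j|$ gives $\sum_j |a_j| \geq \sum_j |\alpha_j| \geq \|x\|$. Since this bound holds for every representation of $x+iy$, passing to the infimum yields $\|x+iy\| \geq \|x\|$.

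I do not expect a genuine obstacle in either case; the only point requiring a little care is the logic of the infimum in the polytope case, where one must be clear that the real-part coefficients $\alpha_j$ give a representation of $x$ that is generally \emph{not} optimal, so one only obtains the one-sided bound $\|x\| \leq \sum_j |\alpha_j|$ — which is exactly the direction needed. The cleanest exposition simply quantifies over all representations of $x+iy$ first and estimates each, rather than trying to start from an optimal representation of $x$.
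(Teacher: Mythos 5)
Your proof is correct and follows essentially the same route as the paper's: in the adjoint case compare $|f_j(x+iy)|=\sqrt{f_j(x)^2+f_j(y)^2}$ with $|f_j(x)|$, and in the polytope case split the coefficients $a_j=\alpha_j+i\beta_j$ of a representation of $x+iy$ and observe that the real parts represent $x$, so $\|x\|\leq\sum_j|\alpha_j|\leq\sum_j|a_j|$. The only cosmetic difference is that the paper starts from a norm-attaining (optimal) representation of $x+iy$, whereas you quantify over all representations and pass to the infimum, which is equally valid and even avoids invoking attainment of the minimum.
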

\begin{proof}
Let us first assume that $\| \cdot \|$ is a complex polytope norm with an essential system of vertices $u_1, \ldots, u_N \in \mathbb{R}^n$. Then for every $z \in \mathbb{C}^n$ we have
$$\|z\| = \min \left \{ \sum_{j=1}^{N} |\lambda_j|: \ z = \sum_{j=1}^{N} \lambda_j u_j \right \}. $$
Let us write $z = x+iy$ for $x, y \in \mathbb{R}^n$ and take $\lambda_1, \ldots, \lambda_N \in \mathbb{C}$ for which the minimum above is attained. If we write $\lambda_j = \alpha_j + i\beta_j$ for $\alpha_j, \beta_j \in \mathbb{R}$ then we see that $x = \sum_{j=1}^{N} \alpha_j u_j$, as the vectors $u_1, \ldots, u_N$ are real. Hence
$$\|x\| \leq \sum_{j=1}^{N} |\alpha_j| \leq \sum_{j=1}^{N} \sqrt{\alpha^2_j+\beta^2_j} = \sum_{j=1}^{N} |\lambda_j|=\|x+iy\|$$
and the conclusion follows.

Now let us suppose that $\| \cdot \|$ is an adjoint complex polytope norm with a real essential system of facets $f_1, \ldots, f_N: \mathbb{C}^n \to \mathbb{C}$. In this case for every $z \in \mathbb{C}^n$ we have
$$\|z\| = \max \{|f_j(z)|: 1 \leq j \leq N\}.$$
Let us write $z = x+iy$ for $x, y \in \mathbb{R}^n$ and take $1 \leq j \leq N$ such that $\|x\|=|f_j(x)|$. Since by assuption $f_j$ is a real functional we have
$$\|x\| = |f_j(x)| \leq \sqrt{f_j(x)^2 + f_j(y)^2} = |f_j(x) + if_j(y)| = |f_j(z)|  \leq \|z\| = \|x+iy\|$$
and the proof is finished in both situations.
\end{proof}

In the following result we collect some information about best approximation in real subspaces of complex polytope spaces with a real essential system of vertices and of adjoint complex polytope spaces with a real essential system of facets.

\begin{twr}
\label{twrpolytope}
Let $X=(\mathbb{C}^n, \| \cdot \|)$ be a complex normed space with a norm that is either a complex polytope norm with a real essential system of vertices or an adjoint complex polytope norm with a real essential system of facets. Suppose that $Y \subseteq X$ is a real subspace and $x \in X \setminus Y$ is a real vector, for which $y_0 \in Y$ is a unique best approximation in $Y$. Then $y_0$ is a real vector and it is a $2$-strongly unique best approximation for $x$. Additionally, if the norm $\| \cdot \|$ is an adjoint complex polytope norm, then $y_0$ is not $\alpha$-strongly unique best approximation for any $\alpha<2$.
\end{twr}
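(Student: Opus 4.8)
The plan is to reduce to the case $y_0=0$ with $x$ real, then treat the two norm classes separately, and finally handle the sharpness statement.

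First I would show that $y_0$ is necessarily real. Since $Y$ is a real subspace, Lemma \ref{realcond} gives $\re(y_0)\in Y$, and because $x$ is real the vectors $x-\re(y_0)$ and $-\im(y_0)$ are real. Applying Lemma \ref{lemreal} to $x-y_0=(x-\re(y_0))+i(-\im(y_0))$ yields $\|x-y_0\|\geq\|x-\re(y_0)\|$; as $\re(y_0)\in Y$ and $y_0$ is a best approximation, the reverse inequality holds too, so $\re(y_0)$ is also a best approximation and uniqueness forces $y_0=\re(y_0)$. Replacing $x$ by the real vector $x-y_0$, I may assume $y_0=0$ and $x$ real.

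Next I would record the real baseline used in the main case. The restriction of $\|\cdot\|$ to $\mathbb{R}^n$ is a real polytope norm (its unit ball is $\conv\{\pm u_j\}$ in the vertex case and $\{t:\max_j|f_j(t)|\leq 1\}$ in the facet case, and it agrees with $\|\cdot\|$ on real vectors), and $0$ is the unique best approximation of $x$ in $Y_{\mathbb{R}}=Y\cap\mathbb{R}^n$; hence Theorem \ref{twrsudolskiwojcik} gives $r_1>0$ with $\|x-u\|\geq\|x\|+r_1\|u\|$ for all $u\in Y_{\mathbb{R}}$. For the adjoint case I would instead verify the hypotheses of Theorem \ref{twr2strgeneral} directly: the extreme points of $S_{X^*}$ are the scalar multiples $\lambda f_j$, so $E(x)=\{\sgn(f_j(x))f_j:|f_j(x)|=\|x\|\}$ is a finite set of real functionals, $M=\max\{|f_j(x)|:|f_j(x)|<\|x\|\}<\|x\|$, and $Z=\lin(Y\cup\{x\})$ is a real subspace so $F(Z)$ is real by Lemma \ref{realcond}; Theorem \ref{twr2strgeneral} then gives $2$-strong uniqueness at once.

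The complex polytope case is the crux, because the norm is a minimum over representations that are highly non-unique when $N>n$; this is the step I expect to be the main obstacle. To sidestep it, for small $y\in Y$ I would fix an optimal representation $x-y=\sum_j c_ju_j$ with $\|x-y\|=\sum_j|c_j|$. Since the $u_j$ are real, $(\re c_j)_j$ represents $x-\re(y)$ and $(\im c_j)_j$ represents $-\im(y)$. Combining the elementary bound $|c_j|\geq|\re c_j|+\frac{(\im c_j)^2}{2|c_j|}$ with $\sum_j|\re c_j|\geq\|x-\re(y)\|\geq\|x\|+r_1\|\re(y)\|$ gives
$$\|x-y\|\geq\|x\|+r_1\|\re(y)\|+\sum_j\frac{(\im c_j)^2}{2|c_j|}.$$
I would then argue by contradiction: if $0$ were not $2$-strongly unique, then by Lemma \ref{lemcompact} the failure is local, so there is a sequence $y_k\to 0$ with $(\|x-y_k\|-\|x\|)/\|y_k\|^2\to 0$. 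The displayed inequality forces $\|\re(y_k)\|=o(\|y_k\|)$, hence $\|\im(y_k)\|\sim\|y_k\|$, and since $|c_{k,j}|\leq\|x-y_k\|$ is bounded it also forces $\sum_j(\im c_{k,j})^2=o(\|y_k\|^2)$; by Cauchy--Schwarz $\sum_j|\im c_{k,j}|=o(\|y_k\|)$, contradicting $\sum_j|\im c_{k,j}|\geq\|\im(y_k)\|\sim\|y_k\|$. This yields the local quadratic estimate, and squaring as in Theorem \ref{twrl1} gives $2$-strong uniqueness.

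Finally, for the sharpness claim in the adjoint case I would produce a purely imaginary perturbation realizing exponent $2$. Let $J=\{j:|f_j(x)|=\|x\|\}$. If every $f_j$ with $j\in J$ vanished on $Y$, then $\|x-y\|=\|x\|$ for all small $y\in Y$, contradicting uniqueness; hence some $j\in J$ and some $w\in Y_{\mathbb{R}}$ (the real or imaginary part of a witness in $Y$) satisfy $f_j(w)\neq 0$. For $y=iw$ and real $t$ the real functionals give $|f_\ell(x-tiw)|^2=f_\ell(x)^2+t^2f_\ell(w)^2$, so for small $t$ one has $\|x-tiw\|^2=\|x\|^2+t^2\max_{\ell\in J}f_\ell(w)^2=\|x\|^2+\Theta(t^2)$, whence $\|x-tiw\|-\|x\|=\Theta(t^2)$. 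Then for any $\alpha<2$ the quotient $(\|x-tiw\|^\alpha-\|x\|^\alpha)/\|tiw\|^\alpha=O(t^{2-\alpha})\to 0$ as $t\to 0^+$, so no $\alpha$-strong uniqueness is possible.
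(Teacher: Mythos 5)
Your proof is correct. The reduction showing $y_0$ is real and the adjoint-case $2$-strong uniqueness coincide with the paper's argument (Lemma \ref{lemreal} plus a direct application of Theorem \ref{twr2strgeneral} to $x-y_0$), and your sharpness argument is essentially the paper's in a slightly different implementation: the paper also perturbs by $tiy$ with $y\in Y$ real, but instead of computing $\|x-tiy\|$ exactly it pigeonholes an index $j$ with $|f_j(x)|=\|x\|$ that realizes the norm for infinitely many $t=1/k$ and applies L'Hospital's rule; your explicit extraction of $w\in Y\cap\mathbb{R}^n$ with $f_j(w)\neq 0$ for some $j\in J$ (via the observation that otherwise $\|x-y\|=\|x\|$ for all small $y\in Y$) replaces the paper's remark that the pigeonholed index automatically satisfies $f_j(y)\neq 0$. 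The genuine divergence is in the complex polytope case. The paper first splits off the real part by the triangle inequality, $\|x-y\|\geq\|x-i\im(y)\|-\|\re(y)\|$, runs a two-case analysis according to whether $\|\re(y)\|\geq C\|y\|^2$, and then estimates an optimal representation of $x-i\im(y)$ using $\sqrt{a^2+b^2}\geq|a|+\frac{1}{8}b^2$ for $|a|,|b|\leq 2$ together with Cauchy--Schwarz, producing the explicit constant $r=r_1/(16N+1)$ on $\|y\|\leq 1$. You instead work directly with an optimal representation of $x-y$ itself, exploiting the same key mechanism (real and imaginary parts of the coefficients represent $x-\re(y)$ and $-\im(y)$, since the $u_j$ are real) through the valid pointwise bound $|c_j|\geq|\re c_j|+\frac{(\im c_j)^2}{2|c_j|}$, and you close with a compactness/contradiction argument via Lemma \ref{lemcompact}: the failing sequence forces $\|\re(y_k)\|=o(\|y_k\|^2)$ and $\sum_j|\im c_{k,j}|=o(\|y_k\|)$ (using boundedness of $|c_{k,j}|$ and Cauchy--Schwarz), contradicting $\|\im(y_k)\|\geq(1-o(1))\|y_k\|$. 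Your route avoids the case split and the triangle-inequality detour and is shorter, at the cost of a non-explicit constant, whereas the paper's version buys quantitative control; both rest on the same two pillars, namely Theorem \ref{twrsudolskiwojcik} applied to $Y\cap\mathbb{R}^n$ with the restricted norm (your verification that the complex norm agrees with the associated real polytope norm on real vectors is needed and correct in both settings) and a quadratic gain extracted from the imaginary parts of the coefficients.
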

\begin{proof}
Since $Y$ is a real subspace we have $\re(y_0), \im(y_0) \in Y$ and Lemma \ref{lemreal} implies that
$$\|x-y_0\|=\|(x-\re(y_0)) + i\im(y_0)\| \geq \|x-\re(y_0)\|,$$
which shows that the distance of $\re(y_0) \in Y$ to $x$ is not greater than of $y_0$. Since $y_0$ is a unique best approximation for $x$ it follows that $y_0=\re(y_0)$ is a real vector.

Now we shall consider the two situations separately and we first start with the setting of $\| \cdot \|$ being an adjoint complex polytope norm. A fact that $y_0$ is a $2$-strongly unique best approximation for $x$ follows directly from Theorem \ref{twr2strgeneral} applied to a vector $x-y_0 \in \mathbb{R}^n$. To have the setting of adjoint complex polytope norms done, we are left with proving that $y_0$ is not an $\alpha$-strongly unique best approximation for any $\alpha<2$. Indeed, let us assume the contrary and let $f_1, \ldots, f_N: \mathbb{C}^n \to \mathbb{C}$ be the real essential system of facets, i.e.
$$\|z\| = \max \{|f_j(z)|: \ 1 \leq j \leq N\}$$
for $z \in \mathbb{C}^n$. By considering $x-y_0$ instead of $x$ we can suppose that $y_0=0$ and by rescalling we can further assume that $\|x\|=1$. Now let $y \in Y$ be any fixed non-zero vector with real coordinates. Then by the assumed $\alpha$-strong uniqueness for every $t > 0$ we have
$$\|x-tiy\|^{\alpha} \geq 1 + rt^{\alpha},$$
for some constant $r>0$. We note that if $1 \leq j \leq N$ is such that $|f_j(x)|<1$ then for $t>0$ small enough we have $|f_j(x-tiy)|<1$. Hence, for $t>0$ sufficiently small, the norm $\|x-tiy\|$ must be realized for an index $1 \leq j \leq N$ such that $|f_j(x)|=1$. By taking $t=\frac{1}{k}$ for positive integer $k$ and letting $k \to \infty$, we see that some index $1 \leq j \leq N$ will realize the norm $\|x-tiy\|$ infinitely many times. In other words, there exists an index $1 \leq j \leq N$ such that $|f_j(x)|=1$ and for infinitely many $k \geq 1$ we have
$$\left |f_j\left ( x-\frac{i}{k}y \right ) \right |^{\alpha} = \left | 1 - \frac{i}{k}f_j(y) \right |^{\alpha} \geq 1 + \frac{r}{k^{\alpha}}.$$
Obviously we must have $a=f_j(y) \neq 0$. This can be rewritten as
$$k^{\alpha}\left ( \left ( 1 + \frac{a^2}{k^2} \right )^{\frac{\alpha}{2}} - 1) \right ) \geq r.$$
However, by the L'Hospital's rule and the assumption $\alpha<2$ we have
$$\lim_{t \to 0^+} \frac{\left ( 1 + t^2a^2 \right )^{\frac{\alpha}{2}} - 1}{t^{\alpha}} = \lim_{t \to 0^+} \frac{a^2(\left ( 1 + t^2a^2 \right )^{\frac{\alpha}{2}-1}}{t^{\alpha-2}} = \lim_{t \to 0^+} a^2 \frac{t^{2-\alpha}}{\left ( 1 + t^2a^2 \right )^{1-\frac{\alpha}{2}}} =0.$$
Hence, for $t=\frac{1}{k}$ with $k$ sufficiently large we get a contradiction and the conclusion follows.

We can now turn our attention to the case of $\| \cdot \|$ being a complex polytope norm. Let $u_1, \ldots, u_N \in \mathbb{R}^n$ be an essential system of vertices of $\| \cdot \|$, so that for $z \in \mathbb{C}^n$ we have
$$\|z\| = \min \left \{ \sum_{j=1}^{N} |\lambda_j|: \ z = \sum_{j=1}^{N} \lambda_j u_j \right \}.$$
As before we can assume that $y_0=0$ and $\|x_0\|=1$.

We shall prove that for some constant $r>0$ we have
\begin{equation}
\label{ineqteza}
\|x-y\| \geq 1 + r\|y\|^2
\end{equation}
for every $y \in Y$, as squaring this inequality yields the desired $2$-strong uniqueness. Moreover, by Lemma \ref{lemcompact} it is enough to establish \eqref{ineqteza} only for $\|y\| \leq 1$.

Because the norm $ \| \cdot \|$, when considered as a norm on $\mathbb{R}^n$, is a polytope norm (with the unit ball $\conv \{\pm u_1, \ldots, \pm u_N\}$) and $Y$ is a real subspace, we know that $0$ is a $1$-strongly unique best approximation for $x$ in a subspace of real parts of vectors from $Y$. In other words, there exists a constant $r_1>0$ such that for every $y \in Y$ we have
$$\|x-\re(y)\| \geq 1 + r_1\|\re(y)\|.$$
In particular, if $y \in Y$ is such a vector that
$$\|\re(y)\| \geq C \|y\|^2$$
for some constant $C \in (0, 1)$ to be specified later, then \eqref{ineqteza} holds with a constant $Cr_1$. Thus, let us assume that the opposite inequality is true. In particular
$$\|\re(y)\| \leq C \|y|^2 \leq C \|y\|.$$
However, on the other hand we have
$$\|y\| = \|\re(y) + i\im(y)\| \leq \|\re(y)\| + \|\im(y)\|$$
and hence
\begin{equation}
\label{ineqim}
\|\im(y)\| \geq (1-C)\|y\|.
\end{equation}
We can now estimate
$$\|x-y\| = \|x - \re(y) - i\im(y)\| \geq \|x - i \im(y)\| - \|\re(y)\| \geq \|x - i \im(y)\| - C\|y\|^2.$$
If we now prove that $\|x - i \im(y)\| \geq 1 + C'\|y\|^2$ for some constant $C'>C$ then the inequality \eqref{ineqteza} will follow with the constant $C'-C>0$. Let us now take a representation
$$x - i \im(y) = (a_1 - ib_1)u_1 + \ldots + (a_N - ib_N)u_N$$
such that $\|x - i \im(y)\| = \sum_{j=1}^{N} |a_j - ib_j| = \sum_{j=1}^{N} \sqrt{a_j^2 + b_j^2}$. Then as $x, \im(y)$ and $u_1, \ldots, u_N$ are all real vectors we have $x = \sum_{j=1}^{N} a_ju_j$ and $\im(y) = \sum_{j=1}^{N} b_ju_j$ so in particular 
\begin{equation}
\label{oszacowania}
\sum_{j=1}^{n} |a_j| \geq \|x\|=1 \quad \text{  and  } \quad \sum_{j=1}^{n} |b_j| \geq \|\im(y)\|.
\end{equation}
Moreover, we note that since $\|x-i \im(y)\| \leq \|x\| + \|\im(y)\| \leq 1 + \|y\| \leq 2$ (using Lemma \ref{lemreal}) we have $|a_j|, |b_j| \leq 2$ for every $1 \leq j \leq N$. Let us now observe that for any two real numbers $a$, $b$ with $|a|, |b| \leq 2$ the following inequality is true
$$\sqrt{a^2 + b^2} \geq |a| + \frac{1}{8}b^2.$$
Indeed, after squaring this is equivalent to
$$b^2 \geq \frac{1}{4}|a|b^2 + \frac{1}{64}b^4.$$
However, from $|a|, |b| \leq 2$ we get
$$\frac{1}{4}|a|b^2 + \frac{1}{64}b^4 \leq \frac{1}{2}b^2 + \frac{1}{16}b^2 = \frac{9}{16} b^2 \leq b^2.$$
Since $|a_j|, |b_j| \leq 2$ we can use the above estimate, combined with the the Cauchy-Schwarz inequality, and estimates \eqref{ineqim},  \eqref{oszacowania} to obtain
$$\|x - i\im(y)\| = \sum_{j=1}^{N} \sqrt{a_j^2 + b_j^2} \geq \sum_{j=1}^{N} \left ( |a_j| + \frac{1}{8} |b_j|^2 \right ) \geq 1 + \frac{1}{8} \sum_{j=1}^{N} |b_j|^2$$
$$ \geq 1 + \frac{1}{8N} \left ( \sum_{j=1}^{N} |b_j| \right )^2 \geq 1 + \frac{1}{8N} \|\im(y)\|^2 \geq 1 + \frac{1-C}{8N}\|y\|^2.$$
Thus, we can take $C' = \frac{1-C}{8N}$ and it is enough to choose $C>0$ so that $\frac{1-C}{8N} > C$. In particular, if we take $C=\frac{1}{16N+1}$ then $C'=2C$. Altogether, the desired inequality \eqref{ineqteza} holds for $\|y\| \leq 1$ with a constant $ r = \frac{r_1}{16N+1}>0$.
\end{proof}

The last part of the previous result highlights a notable contrast in the approximation properties between complex polytope norms and adjoint complex polytope norms. We saw that in a setting of the previous result, a $2$-strong uniqueness can never be improved to an $\alpha$-strong uniqueness for any constant $\alpha<2$ in an adjoint complex polytope norm. However, the situation is vastly different for complex polytope norms, where in fact the best possible $1$-strong uniqueness can hold, even for every real subspace. This happens again in the classical space $\ell_1^n$, for which we give the following strengthening of the previous result.

\begin{twr}
\label{twrl12}
Let $Y \subseteq \ell_{1}^n$ be a real subspace of the complex $\ell_1^n$ space. Suppose that $y_0 \in Y$ is a unique best approximation in $Y$ for a real vector $x \in \ell_{1}^n \setminus Y$. Then $y_0$ is a real vector and it is a $1$-strongly unique best approximation for $x$ in $Y$.
\end{twr}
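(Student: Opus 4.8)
The plan is to reduce the statement to showing that the critical set $\mathcal{S}$ appearing in the proof of Theorem \ref{twrl1} is empty, since in that case the first part of that proof already delivers $1$-strong uniqueness. The easy part comes first: as $\ell_1^n$ is a complex polytope norm whose essential system of vertices is the (real) canonical basis $e_1,\ldots,e_n$ and $Y$ is a real subspace, Theorem \ref{twrpolytope} immediately gives that $y_0$ is a real vector. Replacing $x$ by the real vector $x-y_0$, I may assume $y_0=0$, and after reordering coordinates that $x_j\neq 0$ for $1\le j\le m$ while $x_j=0$ for $m<j\le n$; put $\epsilon_j=x_j/|x_j|\in\{\pm 1\}$, so that $\overline{x_j}/|x_j|=\epsilon_j$.

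The heart of the argument is a \emph{strict} version of the characterizing inequality on the real part. Set $Y_{\mathbb{R}}=\{\re(y):y\in Y\}$; since $Y$ is real we have $Y_{\mathbb{R}}\subseteq Y$, so $0$ is a unique best approximation for $x$ in $Y_{\mathbb{R}}$ inside the real $\ell_1^n$. I would use uniqueness directly: for $u\in Y_{\mathbb{R}}\setminus\{0\}$ and small $t>0$ one computes $\|x-tu\|_1-\|x\|_1=t\bigl(\sum_{j>m}|u_j|-\sum_{j\le m}\epsilon_j u_j\bigr)$, which must be strictly positive, and applying this to $\pm u$ yields $\bigl|\sum_{j\le m}\epsilon_j u_j\bigr|<\sum_{j>m}|u_j|$ for every nonzero real $u\in Y_{\mathbb{R}}$. (This is essentially Theorem \ref{twrsudolskiwojcik} specialised to $\ell_1^n$.)

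Next I would transfer this strictness to complex vectors, which is the step that genuinely needs the real-subspace hypothesis. For $y=p+iq\in Y\setminus\{0\}$ with $p=\re(y)$, $q=\im(y)\in Y_{\mathbb{R}}$, write $P=\sum_{j\le m}\epsilon_j p_j$, $Q=\sum_{j\le m}\epsilon_j q_j$, $A=\sum_{j>m}|p_j|$, $B=\sum_{j>m}|q_j|$. Then $\bigl|\sum_{j\le m}\epsilon_j y_j\bigr|=\sqrt{P^2+Q^2}$, while the triangle inequality in $\mathbb{R}^2$ applied to the vectors $(|p_j|,|q_j|)_{j>m}$ gives $\sum_{j>m}|y_j|=\sum_{j>m}\sqrt{p_j^2+q_j^2}\ge\sqrt{A^2+B^2}$. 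The strict real estimate of the previous paragraph yields $P^2\le A^2$ and $Q^2\le B^2$, each of these being strict precisely when the corresponding part $p$ or $q$ is nonzero (and trivially an equality of zeros otherwise); since $y\neq 0$ at least one part is nonzero, so $\sqrt{P^2+Q^2}<\sqrt{A^2+B^2}\le\sum_{j>m}|y_j|$. Hence $\bigl|\sum_{j\le m}\epsilon_j y_j\bigr|<\sum_{j>m}|y_j|$ for every nonzero $y\in Y$, i.e. $\mathcal{S}=\emptyset$.

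With $\mathcal{S}$ empty the conclusion follows exactly as in the first case of the proof of Theorem \ref{twrl1}: by compactness of $S_Y$ there is $\varepsilon>0$ with $\sum_{j>m}|y_j|-\bigl|\sum_{j\le m}\epsilon_j y_j\bigr|\ge\varepsilon\|y\|_1$ for all $y\in Y$, and then Lemma \ref{ineqcompl} gives $\|x-y\|_1\ge\|x\|_1+\sum_{j>m}|y_j|-\bigl|\sum_{j\le m}\epsilon_j y_j\bigr|\ge\|x\|_1+\varepsilon\|y\|_1$, which is $1$-strong uniqueness. I expect the delicate point to be the penultimate paragraph: the reason a real subspace cannot reproduce the quadratic-only behaviour of Example \ref{counterexample} is exactly that both $\re(y)$ and $\im(y)$ lie in $Y_{\mathbb{R}}$, so the Minkowski/triangle inequality lets the strict real estimate survive the passage to complex combinations; keeping track of the degenerate cases $p=0$ or $q=0$ is where one must be careful.
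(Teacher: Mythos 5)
Your proof is correct, and it shares the paper's skeleton---reduce everything to showing that the exceptional set $\mathcal{S}$ from the proof of Theorem \ref{twrl1} is empty, then finish by the compactness argument of the first case of that proof---but the mechanism by which you empty $\mathcal{S}$ is genuinely different. The paper argues by contradiction using the uniqueness criterion of Lemma \ref{lemchar}: given $y\in\mathcal{S}$, rotated so that $\sum_{j\le m}\epsilon_j y_j=\sum_{j>m}|y_j|$, it applies the (non-strict) characterizing inequality to $\re(y)\in Y$, forces equality throughout the resulting chain, and thereby exhibits a \emph{real} normalized vector ($\re(y)/\|\re(y)\|_1$, or $\im(y)/\|\im(y)\|_1$ when $\re(y)=0$) lying in $\mathcal{S}$ whose products $\overline{x_j}b_j$ all have vanishing imaginary part, contradicting the uniqueness condition of Lemma \ref{lemchar}. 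You instead prove strictness directly: first for real directions $u\in Y_{\mathbb{R}}\setminus\{0\}$ via the first-order expansion of $t\mapsto\|x-tu\|_1$ applied to $\pm u$ (so you never need the imaginary-part criterion of Lemma \ref{lemchar}, only uniqueness itself), and then for all nonzero complex $y=p+iq$ at once by Minkowski's inequality in $\mathbb{R}^2$ applied to the vectors $(|p_j|,|q_j|)_{j>m}$, which upgrades $P^2<A^2$ (resp.\ $Q^2<B^2$, with the degenerate cases $p=0$ or $q=0$ correctly reducing to equalities of zeros) to $\sqrt{P^2+Q^2}<\sqrt{A^2+B^2}\le\sum_{j>m}|y_j|$. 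Both routes exploit exactly the same structural fact, namely $\re(y),\im(y)\in Y$ for a real subspace; the paper's version buys uniformity (it reuses Lemma \ref{lemchar} and mirrors Theorem \ref{twrl1} almost verbatim), while yours is more quantitative and self-contained, needing only Lemma \ref{ineqcompl} at the end. One marginal remark, not a gap: when $m=n$ your strict real inequality reads $\left|\sum_{j\le n}\epsilon_j u_j\right|<0$, an outright contradiction, which simply shows the hypotheses force $m<n$ (a nonzero real subspace $Y$ contains a nonzero real vector); the paper's proof absorbs this case silently as well.
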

\begin{proof}
The fact that $y_0$ is a real vector follows from the previous theorem. By considering $x-y_0$ instead of $x$ we can assume that $y_0=0$. We can further suppose that $x_j \neq 0$ for $1 \leq j \leq m$, while $x_j=0$ for $m+1 \leq j \leq n$ (possibly $m=n$). By a characterization given in Lemma \ref{lemchar} for every $y \in Y$ we have
\begin{equation}
\label{condy}
\left | \sum_{j=1}^{m} \frac{x_j}{|x_j|} y_j \right | \leq \sum_{j=m+1}^{n} |y_j|.
\end{equation}
Let $\mathcal{S} \subseteq S_Y$ be defined as
$$\mathcal{S} = \left \{ y \in S_Y: \  \left | \sum_{j=1}^{m} \frac{x_j}{|x_j|} y_j \right | = \sum_{j=m+1}^{n} |y_j| \right \}.$$
Arguing in exactly the same way as in proof of Theorem \ref{twrl1} we can prove that if the set $\mathcal{S}$ is empty, then $0$ is a $1$-strongly unique best approximation for $x$. Thus, let us assume that $y \in \mathcal{S}$. Since $ty \in \mathcal{S}$ for any $t \in \mathbb{C}$ with $|t|=1$, we can further assume that
$$\sum_{j=1}^{m} \frac{x_j}{|x_j|} y_j  = \sum_{j=m+1}^{n} |y_j|.$$
In particular $\im \left ( \sum_{j=1}^{m} \frac{x_j}{|x_j|} y_j \right ) = 0$. Since $Y$ is a real subspace, we have $\re(y) \in Y$ and by applying \eqref{condy} to this vector we obtain
$$ \sum_{j=1}^{m} \frac{x_j}{|x_j|} y_j = \left | \re \left ( \sum_{j=1}^{m} \frac{x_j}{|x_j|} y_j  \right ) \right | = \left | \sum_{j=1}^{m} \frac{x_j}{|x_j|} \re(y_j) \right | \leq \sum_{j=m+1}^{n} |\re(y_j)| \leq \sum_{j=m+1}^{n} |y_j|.$$
Hence, in every estimate we must have an equality. In particular, assuming that $\re(y) \neq 0$, we have proved that $\frac{\re(y)}{\|\re(y)\|_1} \in \mathcal{S}$. However, this violates a condition for uniqueness given in Lemma \ref{lemchar} as $\im(x_j\re(y_j)) = 0$ for any $1 \leq j \leq m$. If $\re(y)=0$ then $\im(y_j) \neq 0$ for some $1 \leq j \leq m$ (as $y \neq 0$) and the equality in the estimates above implies that $y_j=0$ for $m+1 \leq j \leq n$. Thus, in this case $\frac{\re(-iy)}{\|\re(iy)\|_1}=\frac{\im(y)}{\|\im(y)\|_1}$ is a real non-zero vector belonging to $\mathcal{S}$ and again we obtain a contradiction with the uniqueness of $0$ as a best approximation for $x$. This finishes the proof.
\end{proof}

We emphasize that we do not know if the strengthening of Theorem \ref{twrpolytope} to the $1$-strong uniqueness is possible for all complex polytope norms or is it something particular to the $\ell_1^n$ space. The situation reminds that of Theorem \ref{twrl1} where we have established a condition for the space $\ell_1^n$, which we were not able to get for all complex polytope norms. It is not clear, if the space $\ell_1^n$ is representative for all complex polytope norms or is it has some exceptionally good approximation properties.

\section{$2$-strong uniqueness of minimal projections}
\label{sectionproj}

In this section we use previous results and similar methods to establish $2$-strong uniqueness of some minimal projections. Our basic observation is the fact that if $X$ is a finite-dimensional normed space and $Y \subseteq X$ is its linear subspace, then the problem of determining if a given projection $P: X \to Y$ is minimal, can be equivalently stated as a problem of finding a best approximation in the space of linear operators. To be more precise, a projection $P$ is a minimal if and only if, the zero operator is a best approximation for $P$ (when the standard operator norm is considered) in the following subspace of the space $\mathcal{L}(X, Y)$ of all continuous linear operators from $X$ to $Y$:
$$\mathcal{L}_Y(X, Y) = \{L \in \mathcal{L}(X, Y): \ L|_Y \equiv 0\}. $$
Moreover, $P$ is a unique minimal projection if and only if, $0$ is a unique best approximation for $P$ in $\mathcal{L}_Y(X, Y)$, and similarly, $P$ is an $\alpha$-strongly unique minimal projection if and only if, $0$ is an $\alpha$-strongly unique best approximation for $P$. This observation allows us to approach problems of minimal projections with the methods of approximation theory.

Before going to the setting of projections, we first need to recall one more general result. A following lemma of Smarzewski \cite{smarzewski} is in itself a very useful tool for establishing a $2$-strong uniqueness of an element of a best approximation. Let us recall that by Lemma \ref{lemfunkcjonal}, the zero vector is a best approximation for $x$ in $Y$ if and only if we can find a linear functional $f$ such that $f(x)=\|f\|=1$ and $f|_Y \equiv 0$. Lemma of Smarzewski states that if such a functional can be written as a convex combination of functionals of norm one, that are total over $Y$, then the best approximation is $2$-strongly unique. For the reader's convenience, we include a short proof.

\begin{lem}
\label{lem2strong}
Let $X$ be a normed space over $\mathbb{K}$, let $Y \subseteq X$ be a finite-dimensional linear subspace and let $x \in X \setminus Y$ be a vector. Suppose that $f_1, \ldots, f_l \in S_{X^*}$ are linear functionals and $\alpha_1, \ldots, \alpha_l$ are positive reals with the sum $1$, such that a functional $f \in X^*$ defined as
$$f =\sum_{j=1}^{l} \alpha_jf_j$$
satisfies $f(x)=\|x\|$ and $f|_Y \equiv 0$. Assume additionally that there does not exists a non-zero vector $y_0 \in Y$ such that $f_j(y_0)=0$ for every $1 \leq j \leq l$. Then $0$ is a $2$-strongly unique approximation for $x$.
\end{lem}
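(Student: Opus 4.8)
The plan is to verify directly the defining inequality of $2$-strong uniqueness with $\alpha = 2$, namely that there is a constant $r>0$ with $\|x-y\|^2 \geq \|x\|^2 + r\|y\|^2$ for every $y \in Y$. The whole strategy is to square the trivial bounds $\|x-y\| \geq |f_j(x-y)|$, average them against the weights $\alpha_j$, and exploit the two hypotheses on $f$ to make the linear term disappear, leaving a positive-definite quadratic remainder.

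First I would extract the pointwise consequences of the assumptions. From $f(x)=\|x\|$, together with $\sum_j \alpha_j = 1$ and $\|f_j\|=1$, the chain $\|x\| = \re(f(x)) = \sum_j \alpha_j \re(f_j(x)) \leq \sum_j \alpha_j |f_j(x)| \leq \|x\|$ is forced to be an equality throughout, which gives $f_j(x) = \|x\|$ (a nonnegative real) for every $j$. On the other hand, $f|_Y \equiv 0$ gives $\sum_j \alpha_j f_j(y) = 0$, and in particular $\sum_j \alpha_j \re(f_j(y)) = 0$, for all $y \in Y$.

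Next I would fix $y \in Y$ and assemble the estimate. Since $\|x-y\|^2 \geq |f_j(x-y)|^2$ for each $j$ and the $\alpha_j$ form a convex combination, averaging yields $\|x-y\|^2 \geq \sum_j \alpha_j |f_j(x-y)|^2$. Using $f_j(x)=\|x\|$ and expanding $|f_j(x-y)|^2 = \bigl|\,\|x\|-f_j(y)\,\bigr|^2 = \|x\|^2 - 2\|x\|\re(f_j(y)) + |f_j(y)|^2$, then summing against $\alpha_j$, the cross term vanishes by the previous paragraph and I obtain the clean inequality $\|x-y\|^2 \geq \|x\|^2 + \sum_j \alpha_j |f_j(y)|^2$.

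It then remains to bound $\sum_j \alpha_j |f_j(y)|^2$ from below by a multiple of $\|y\|^2$, and this is the only nonroutine point — everything preceding it is a direct computation valid over both $\mathbb{R}$ and $\mathbb{C}$. The totality hypothesis, that no nonzero $y_0 \in Y$ is annihilated by all the $f_j$, says precisely that the map $y \mapsto (f_1(y),\ldots,f_l(y))$ is injective on $Y$; since every $\alpha_j$ is positive, the form $q(y) = \sum_j \alpha_j |f_j(y)|^2$ is therefore a positive-definite quadratic form on the finite-dimensional space $Y$. By compactness of $S_Y$ it attains a strictly positive minimum $r>0$, so $q(y) \geq r\|y\|^2$ for all $y \in Y$, and combining this with the displayed inequality finishes the proof. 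I expect the main subtlety to be exactly this last step, where the finite-dimensionality of $Y$ is essential to upgrade injectivity into a uniform positive lower bound.
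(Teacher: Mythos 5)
Your proof is correct and coincides essentially step for step with the paper's own argument: the equality-case analysis of the convex combination forcing $f_j(x)=\|x\|$ for every $j$, the averaged expansion of $|f_j(x-y)|^2$ with the linear term annihilated by $f|_Y \equiv 0$, and the finite-dimensional compactness argument turning the totality hypothesis into a uniform lower bound. The only cosmetic difference is that the paper packages $\left( \sum_{j} \alpha_j |f_j(y)|^2 \right)^{1/2}$ as a norm $\| \cdot \|_0$ on $Y$ and invokes equivalence of norms, whereas you treat it as a positive-definite quadratic form minimized on $S_Y$ --- the same mechanism in different clothing.
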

\begin{proof}
Since $f(x)=\|x\|$ and $f$ is a convex combination of the functionals $f_j$, the triangle inequality readily implies that $f_j(x)=\|x\|$ for every $1 \leq j \leq l$. Moreover, from the assumption it follows that the function $\| \cdot \|_0$ given on $Y$ as
$$\|y\|_0 = \left ( \sum_{j=1}^{l} \alpha_j |f_j(y)|^2 \right )^{\frac{1}{2}},$$
is a norm on $Y$. Because the space $Y$ is finite-dimensional, there exists a constant $r>0$ such that $\|y\|_0 \geq r\|y\|$ for every $y \in Y$. For any fixed $y \in Y$ we can now estimate
$$\|x-y\|^2 \geq |f_j(x-y)|^2= |f_j(x) - f_j(y)|^2$$
$$=|f_j(x)|^2 - 2\re(f_j(x) \cdot f_j(y)) + |f_j(y)|^2$$
$$=\|x\|^2 - 2\|x\|\re(f_j(y)) + |f_j(y)|^2.$$
Multiplying this by $\alpha_j>0$ and summing over all $1 \leq j \leq l$ we get
$$\|x-y\|^2 \geq \|x\|^2 - 2 \|x\| \re \left ( \sum_{j=1}^{l} \alpha_j f_j(y) \right ) + \sum_{j=1}^{l} \alpha_j |f_j(y)|^2$$
$$=\|x\|^2 + \|y\|_0^2 \geq \|x\|^2 + r^2\|y\|^2,$$
which proves that $0$ is a $2$-strongly unique best approximation for $x$ with a constant $r^2>0$.

\end{proof}

To be fully able to apply methods of the approximation theory to problems of minimal projections, we shall need a notion of the Chalmers-Metcalf functional/operator. It is a powerful tool with many applications, but in the following we shall provide only a concise introduction to this concept along with some basic properties. As we have already noted, a projection $P: X \to Y$ is minimal if and only if, the zero operator is a best approximation for $P$ in the subspace $\mathcal{L}_Y(X, Y)$. If Lemma \ref{lemfunkcjonal} is applied in the space of linear operators $\mathcal{L}(X, Y)$, then a functional from this lemma is a so called \emph{Chalmers-Metcalf functional} $F \in S_{L^*(X, Y)}$ for $Y$. For a sake of simplicity, let us assume that $X$ is a finite-dimensional, as we will be concerned only with such a situation. Using the well-known characterization of extreme points of the unit ball of the space $\mathcal{L}^*(X, Y)$, this functional $F$ of norm $1$ can be then written in the form
$$F = \sum_{j=1}^{l} \alpha_j x_j \otimes f_j,$$
where $\alpha_j$'s are positive reals with sum $1$ and $(x_j, f_j) \in S_{X} \times S_{Y^*}$ for $1 \leq j \leq l$. In this case by $x \otimes f \in \mathcal{L}^*(X, Y)$ (where $x \in X$ and $f \in Y^*$) we understand a functional defined as $(x \otimes f)(L) = f(L(x))$ for $L \in \mathcal{L}(X, Y)$. Considering the fact that, as in Lemma \ref{lemfunkcjonal}, a functional $F$ satisfies $F(P)=\|P\|=\lambda(Y, X)$, it can be easily deduced that for every $1 \leq j \leq l$ we have $f_j(P(x_j))=\|P\|=\lambda(Y, X)$, i.e. a pair $(x_j, f_j) \in S_{X} \times S_{Y^*}$ is a \emph{norming pair} for the projection $P$. Moreover, the condition $f|_Y \equiv 0$ from Lemma \ref{lemfunkcjonal} translates now to $F(L)=0$ for every $L \in \mathcal{L}_Y(X, Y)$.

Using trace duality, one can obtain an equivalent operator version of the Chalmers-Metcalf functional. For $x \in X$ and $f \in Y^*$, we defined $x \otimes f$ as a functional on $\mathcal{L}(X, Y)$, but this can be regarded also as a rank one operator from $Y$ to $X$ defined as $Y \ni y \to f(y)x \in X$. By a \emph{Chalmers-Metcalf operator} $T: Y \to X$ we shall understand an operator of the form:
$$T = \sum_{j=1}^{l} \alpha_j x_j \otimes f_j,$$
where again $(x_j, f_j) \in S_{X} \times S_{Y^*}$ is a norming pair for $P$ for every $1 \leq j \leq l$ and operator $T$ satisfies $T(Y) \subseteq Y$ (which is a consequence of the condition $F|_{\mathcal{L}_Y(X, Y)} \equiv 0$ in the functional version). In this case we have a simple formula for the projection constant given as $\lambda(Y, X) = \tr(T)$ (the trace of $T$ is properly defined as $T(Y) \subseteq Y$).

To summarize this short discussion, the problem of determining if a projection $P$ is minimal is equivalent to verifying if some convex combination of its norming pairs $x_j \otimes f_j$ gives rise to a Chalmers-Metcalf functional/operator. In the case of functional this convex combination has to vanish on the subspace $\mathcal{L}_Y(X, Y)$, while in the operator version $Y$ has to be an invariant subspace of the Chalmers-Metcalf operator. It turns out that a Chalmers-Metcalf functional/operator constructed for one minimal projection $P$ works equally well for every other minimal projection, so that the norming pairs $(x_j, f_j)$ appearing in the definition, turn out to be norming pairs for every minimal projection. In general a Chalmers-Metcalf functional/operator does not have to be unique. We refer the reader to \cite{lewickiskrzypekoperator2} for a thorough introduction to the notion of Chalmers-Metcalf operator and an extensive discussion of its properties.

In \cite{lewickiskrzypekoperator} a connection between a uniqueness of minimal projection and the Chalmers-Metcalf operator was studied. In particular, in Corollary $2.3$ it was proved that under an assumption of smoothness of $X$, the invertibility of at least one Chalmers-Metcalf operator for $Y$, implies that a minimal projection $P: X \to Y$ is unique. Below we observe that in the hyperplane case the assumption of smoothness can in fact be dropped, assuming that a hyperplane is non $1$-complemented. Moreover, the uniqueness can be improved to the $2$-strong uniqueness. From the proof it is easy to see that the $2$-strong uniqueness holds true also for the non-hyperplane setting under the additional smoothness assumption (as in Corollary $2.3$ from \cite{lewickiskrzypekoperator}).

\begin{lem}
\label{lemproj}
Let $X$ be an $n$-dimensional normed space over $\mathbb{K}$ and let $Y \subseteq X$ be a subspace of dimension $n-1$ such that $\lambda(Y, X)>1$. Assume that $T:Y \to X$ given by
$$T=\alpha_1 x_1 \otimes f_1 +  \ldots + \alpha_l x_l \otimes f_l,$$
is a Chalmers-Metcalf operator for $Y$. If there does not exist a non-zero vector $y_0 \in Y$ such that $f_j(y_0)=0$ for all $1 \leq j \leq l$, then a minimal projection $P: X \to Y$ is $2$-strongly unique. In particular, a minimal projection is $2$-strongly unique when $T$ is an injective mapping.
\end{lem}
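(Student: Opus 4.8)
The plan is to transfer the problem into the operator space and apply Smarzewski's Lemma \ref{lem2strong}. Recall that $P$ is a $2$-strongly unique minimal projection precisely when the zero operator is a $2$-strongly unique best approximation for $P$ in the subspace $\mathcal{L}_Y(X,Y) \subseteq \mathcal{L}(X,Y)$. The Chalmers-Metcalf operator $T$ provides, via the same data, the functional $F = \sum_{j=1}^{l} \alpha_j\, x_j \otimes f_j$ on $\mathcal{L}(X,Y)$, where $(x_j \otimes f_j)(L) = f_j(L(x_j))$. Since each $(x_j, f_j)$ is a norming pair for $P$, we have $(x_j \otimes f_j)(P) = f_j(P(x_j)) = \|P\| = \lambda(Y,X)$; this gives both $\|x_j \otimes f_j\|_{\mathcal{L}^*(X,Y)} = 1$ and $F(P) = \|P\|$. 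Moreover $F$ vanishes on $\mathcal{L}_Y(X,Y)$: for $L \in \mathcal{L}_Y(X,Y)$ the composition $L \circ T$ vanishes on $Y$ because $T(Y) \subseteq Y$ and $L|_Y \equiv 0$. Thus all the hypotheses of Lemma \ref{lem2strong} hold except possibly the totality condition, which here reads: \emph{there is no non-zero $L \in \mathcal{L}_Y(X,Y)$ with $f_j(L(x_j)) = 0$ for every $j$.}

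Verifying this totality condition is the core of the proof, and it is where both the codimension-one assumption and the hypothesis $\lambda(Y,X) > 1$ are used. I would fix a vector $x_0 \in X \setminus Y$ and use $X = Y \oplus \mathbb{K} x_0$. Because $L|_Y \equiv 0$, every $L \in \mathcal{L}_Y(X,Y)$ is completely determined by $y_0 := L(x_0) \in Y$, and $L = 0$ if and only if $y_0 = 0$. Writing $x_j = \tilde{y}_j + c_j x_0$ with $\tilde{y}_j \in Y$ and $c_j \in \mathbb{K}$, we obtain $L(x_j) = c_j y_0$ and hence $f_j(L(x_j)) = c_j f_j(y_0)$.

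The decisive step is to show that every coefficient $c_j$ is non-zero. Suppose $c_j = 0$ for some $j$; then $x_j \in Y$, so $P(x_j) = x_j$ and consequently $\lambda(Y,X) = f_j(P(x_j)) = f_j(x_j)$. But $|f_j(x_j)| \le \|f_j\|\,\|x_j\| = 1$, which contradicts $\lambda(Y,X) > 1$. With all $c_j \ne 0$, the conditions $f_j(L(x_j)) = 0$ for all $j$ are equivalent to $f_j(y_0) = 0$ for all $j$; by hypothesis the latter forces $y_0 = 0$, hence $L = 0$, establishing totality. Lemma \ref{lem2strong} then yields that $0$ is a $2$-strongly unique best approximation for $P$, i.e. $P$ is a $2$-strongly unique minimal projection.

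It remains to justify the ``in particular'' clause. If $T$ is injective and $f_j(y_0) = 0$ for all $j$, then $T(y_0) = \sum_{j=1}^{l} \alpha_j f_j(y_0) x_j = 0$, forcing $y_0 = 0$; thus injectivity of $T$ implies the totality hypothesis. The main obstacle I anticipate is purely organizational: keeping straight the two roles of $x_j \otimes f_j$ (as a functional on $\mathcal{L}(X,Y)$ and as an operator $Y \to X$) and reducing the operator-level totality condition to the vector-level hypothesis. Once the splitting $X = Y \oplus \mathbb{K}x_0$ is introduced, the reduction is elementary and the role of $\lambda(Y,X) > 1$ becomes transparent.
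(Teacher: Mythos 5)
Your proposal is correct and follows essentially the same route as the paper: both reduce the problem via Smarzewski's Lemma \ref{lem2strong} to showing that no non-zero $L \in \mathcal{L}_Y(X,Y)$ satisfies $f_j(L(x_j))=0$ for all $j$, and both exploit codimension one (your splitting $X = Y \oplus \mathbb{K}x_0$ is just an explicit form of the paper's observation that such $L$ has rank at most one) together with the fact that $\lambda(Y,X)>1$ forces $x_j \notin Y$, argued by exactly the same norming-pair contradiction $\lambda(Y,X) = f_j(x_j) \le 1$. Your version is if anything marginally tidier, since taking all $c_j \neq 0$ at once avoids the paper's separate case $L(x_j)=0$, and your explicit verification of the hypotheses of Lemma \ref{lem2strong} (including $F|_{\mathcal{L}_Y(X,Y)} \equiv 0$ via $T(Y) \subseteq Y$ and trace duality) spells out background the paper leaves implicit.
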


\emph{Proof.} By Lemma \ref{lem2strong} it is enough to prove that if an operator $L \in \mathcal{L}_Y(X, Y)$ satisfies $f_j(L(x_j))=0$ for every $1 \leq j \leq l$, then $L \equiv 0$. Indeed, because $Y$ is of the dimension $n-1$, the operator $L$ is of the rank at most $1$ and hence the image of $L$ is of the form $\lin\{y_0\}$ for some $y_0 \in Y$. We note that since $\lambda(Y, X)>1$ we have $x_j \not \in Y$ for every $1 \leq j \leq l$, as the norming points of a minimal projection onto $Y$ can not belong to $Y$ in this case. In particular, if $L(x_j)=0$ for at least one index $1 \leq j \leq l$, then $L \equiv 0$. Assuming otherwise, we have $L(x_j)=a_jy_0$ for some $a_j \neq 0$ and hence if $f_j(L(x_j))=0$, then also $f_j(y_0)=0$. Because this holds for every $1 \leq j \leq l$ we must have $y_0=0$ by the assumption and the conclusion follows. \qed

A well-known theorem of Odyniec \cite{odinec2} says that if $Y$ is a two-dimensional subspace of a three-dimensional real normed space $X$ such that $\lambda(Y, X)>1$, then the minimal projection onto $X$ is unique. It was proved later in \cite{lewicki} (Theorem $2.1$) that, in this case, the minimal projection is actually even $1$-strongly unique. Although the theorem of Odyniec is known already from $1978$, to our best knowledge, the complex counterpart was never studied. In the following theorem we extend theorem of Odyniec to the complex setting, proving that in this case the minimal projection is not only unique, but even $2$-strongly unique. It should be emphasized that, while in this paper we usually assume that uniqueness holds a priori, this is not the case this time. The proof works equally well for real and complex scalars, although the result is new only in the complex case.

\begin{twr}
\label{twrcompl}
Let $X = (\mathbb{K}^n, \| \cdot \|)$ be a three-dimensional normed space over $\mathbb{K}$ and let $Y \subseteq X$ be a two-dimensional subspace. If $\lambda(Y, X)>1$, then a minimal projection $P: X \to Y$ is $2$-strongly unique.
\end{twr}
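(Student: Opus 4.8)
The plan is to reduce the theorem directly to Lemma \ref{lemproj}. Since $\dim X = 3$ and $\dim Y = 2$, the subspace $Y$ is a hyperplane, so Lemma \ref{lemproj} applies as soon as we exhibit a Chalmers--Metcalf operator $T = \sum_{j=1}^{l} \alpha_j x_j \otimes f_j$ for $Y$ whose functionals $f_1, \ldots, f_l \in S_{Y^*}$ have no common nonzero zero in $Y$, i.e. there is no nonzero $y_0 \in Y$ with $f_j(y_0) = 0$ for every $1 \leq j \leq l$. A minimal projection $P$ exists by the standard compactness argument, and applying Lemma \ref{lemfunkcjonal} in the operator space $\mathcal{L}(X, Y)$, together with the extreme-point description of $S_{\mathcal{L}^*(X, Y)}$ and trace duality, furnishes such an operator $T$ with its defining properties $T(Y) \subseteq Y$ and $\tr(T) = \lambda(Y, X)$.

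The first observation is that, because $\dim Y = 2$, the desired condition fails \emph{precisely} when all the functionals $f_j$ are proportional to a single $g \in S_{Y^*}$, namely the functional whose kernel is $\lin\{y_0\}$. The heart of the argument is to rule this degenerate situation out using the hypothesis $\lambda(Y, X) > 1$. Suppose then that $f_j = c_j g$ with $|c_j| = 1$ for every $j$. In this case $T = v \otimes g$, where $v = \sum_{j=1}^{l} \alpha_j c_j x_j$ satisfies $\|v\| \leq \sum_{j=1}^{l} \alpha_j |c_j| \, \|x_j\| = 1$. Since $T(Y) \subseteq Y$ and $g$ does not vanish identically on $Y$, the image vector $v$ must lie in $Y$; computing the trace of the rank-one map $y \mapsto g(y)v$ in a basis of $Y$ adapted to $v$ gives $\tr(T) = g(v)$. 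Consequently $\lambda(Y, X) = \tr(T) = g(v) \leq |g(v)| \leq \|g\| \, \|v\| \leq 1$, contradicting $\lambda(Y, X) > 1$.

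Therefore the functionals $f_j$ cannot all be proportional; as $\dim Y^* = 2$ they must span $Y^*$, so no nonzero $y_0 \in Y$ is annihilated by all of them. Lemma \ref{lemproj} then yields that the minimal projection $P$ is $2$-strongly unique, which is exactly the assertion. I expect the main obstacle to be the trace estimate in the rank-one case: one must first verify that $v \in Y$ (so that $\tr(T)$ is even well defined on $Y$), correctly identify $\tr(v \otimes g) = g(v)$, and then exploit that $\tr(T)$ equals the \emph{real} number $\lambda(Y, X)$ so that the modulus bound $|g(v)| \leq 1$ closes the argument uniformly for $\mathbb{K} = \mathbb{R}$ and $\mathbb{K} = \mathbb{C}$. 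The surrounding steps---existence of $P$ and of a Chalmers--Metcalf operator, and the reduction of the hypothesis of Lemma \ref{lemproj} to the no-common-kernel condition---are routine given the machinery recalled just before that lemma.
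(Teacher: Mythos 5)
Your proposal is correct and follows essentially the same route as the paper: reduce to Lemma \ref{lemproj}, observe that since $\dim Y^* = 2$ a common nonzero zero forces all $f_j$ to be unimodular multiples of one functional $g$, deduce that $T$ collapses to a rank-one map $v \otimes g$ with $v = \sum_j \alpha_j c_j x_j \in Y$ and $\|v\| \leq 1$, and derive the contradiction $1 < \lambda(Y, X) = \tr(T) = g(v) \leq \|g\|\,\|v\| \leq 1$. The details you flag as potential obstacles (membership $v \in Y$ via $T(Y) \subseteq Y$, the identification $\tr(v \otimes g) = g(v)$, and using that $\tr(T)$ is the real number $\lambda(Y, X)$) are handled in the paper exactly as you describe.
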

\begin{proof}. Let $T:Y \to X$ given by
$$T=\alpha_1 x_1 \otimes f_1 + \ldots + \alpha_l x_l \otimes f_l,$$
be a Chalmers-Metcalf operator for $Y$. By Lemma \ref{lemproj} it is enough to prove that if $y_0 \in Y$ satisfies $f_j(y_0)=0$ for every $1 \leq j \leq l$, then $y_0=0$. For the sake of a contradiction, let us suppose that $y_0 \neq 0$. Since $Y^*$ is a two-dimensional space, this implies the functionals $f_1, \ldots, f_l \in Y^*$ span a subspace of dimension $1$ in $Y^*$, that is, every two of them are linearly dependent. Because they are of norm $1$, for every $2 \leq j \leq l$ there exists $a_j \in \mathbb{K}$, $|a_j|=1$ such that $f_j = a_j f_1$. Thus, for every $y \in Y$ we have
$$T(y) = f_1(y) \left ( \alpha_1 x_1  + \alpha_2 a_2 x_2 + \ldots + \alpha_l a_l x_l \right ).$$
Clearly, $f_1$ is a non-zero functional, so we can take $y \in Y$ such that $f_1(y)=1$. From this and the fact that $Y$ is an invariant subspace for $T$, we get that
$$\alpha_1 x_1 + \alpha_2 a_2 x_2 + \ldots + \alpha_l a_l x_l \in Y.$$
Therefore, we can calculate the trace of of $T$ simply as
$$\tr\left (T \right ) = f_1 \left ( \alpha_1 x_1 + \alpha_2 a_2 x_2 + \ldots + \alpha_l a_l x_l \right ).$$
However, as $\|x_j\|=1$ for every $1 \leq j \leq l$, by the triangle inequality we have that
$$\|\alpha_1 x_1 + \alpha_2 a_2 x_2 + \ldots + \alpha_l a_l x_l\| \leq 1.$$
Hence
$$1<\lambda(Y, X)=|\tr\left (T \right )| = \left | f_1 \left ( \alpha_1 x_1 + \alpha_2 a_2 x_2 + \ldots + \alpha_l a_l x_l \right ) \right |$$
$$\leq \|f_1\| \cdot |\alpha_1 x_1 + \alpha_2 a_2 x_2 + \ldots + \alpha_l a_l x_l\| \leq 1,$$
which leads to a contradiction. This proves that $y_0=0$ and in consequence a minimal projection from $X$ onto $Y$ is $2$-strongly unique. 
\end{proof}

In both real and complex cases, already the uniqueness from the previous result fails without the assumption $\lambda(Y, X)>1$. Moreover, there is also no uniqueness for hyperplanes of general $n$-dimensional normed spaces, when $n \geq 4$. For examples see Remarks $2.2$ and $2.3$ in \cite{lewicki}. As we shall see soon, a $2$-strong uniqueness can not be improved for the non $1$-complemented hyperplanes of the complex space $\ell_{\infty}^3$ (Theorem \ref{twrlinfty}), so the previous result is best possible in the complex setting. In the example below we consider a setting of projections onto $1$-dimensional subspaces. It turns out, that in this situation, even if the minimal projection is unique, it does not have to be $2$-strongly unique. 

\begin{example}
\label{example1dim}
Let $X$ be an $n$-dimensional normed space over $\mathbb{K}$ (where $n \geq 2$) and let $Y = \lin\{y\}$, where $y \in X$ is a unit vector. Let $f_0 \in Y^*$ be a functional such that $\|f_0\|=f_0(y)=1$. Then a minimal projection $P_{f_0}: X \to Y$ with $\|P_{f_0}\|=1$ is given by a formula $P_{f_0}(x)= f_0(x)y$. Clearly, if such a functional $f_0$ is unique (which happens, for example, when $X$ is a smooth space), then a minimal projection onto $Y$ is also unique. Any other projection onto $Y$ can be written as $P_{f}(x)=\frac{f(x)}{f(y)} y$ where $f \in Y^*$ is any functional such that $f(y) \neq 0$. Then $\|P_f\| = \frac{\|f\|}{|f(y)|}$ and $\|P_f - P_{f_0}\| = \frac{1}{|f(y)|}\|f - f(y)f_0\|$. Thus, the condition for an $\alpha$-strong uniqueness of $P_{f_0}$
$$\|P_{f}\|^{\alpha} \geq \|P_{f_0}\|^{\alpha} + r \|P_f - P_{f_0}\|^{\alpha}$$
rewrites as
$$\|f\|^{\alpha} \geq |f(y)|^{\alpha} + r\|f - f(y)f_0\|^{\alpha}.$$
Let us now take a concrete example $X = \ell_p^n$, where $1 < p < 2$ and $y=(1, 0, \ldots, 0)$. Then $X$ is a smooth space and $f_0(x)=x_1$ for $x \in \ell_p^n$. Moreover, let us take $f$ given by a vector $(1, t, 0, \ldots, 0)$ where $t>0$. Then $\|f\|=\left ( 1 + t^q \right )^{\frac{1}{q}}$, where $q = \frac{p}{p-1}>2$. Similarly we calculate $\|f-f(y)f_0\| = t$. Hence, the condition for an $\alpha$-strong uniqueness can be rewritten as
$$\left ( 1 + t^q \right )^{\frac{\alpha}{q}} \geq 1 + rt^{\alpha}.$$
However, by the L'Hospital's rule we have
$$\lim_{t \to 0^+} \frac{\left ( 1 + t^q \right )^{\frac{\alpha}{q}} - 1}{t^{\alpha}} = \lim_{t \to 0^+} \left ( 1 + t^q \right )^{\frac{\alpha}{q}-1} \cdot t^{q-\alpha}.$$
We see that if $\alpha<q$, then the above limit is equal to $0$ and the required condition fails. In other words, an $\alpha$-strong uniqueness can hold only for $\alpha \geq q$. By taking $p$ arbitrarily close to $1$, it is clear that $q$ can be arbitrarily large. In other words, there is no a single $\alpha$ which would guarantee the $\alpha$-strong uniqueness of a unique minimal projection for all normed spaces of a given dimension and their $1$-dimensional subspaces.
\end{example}

In the next theorem we move to a particular setting of hyperplanes of the complex $\ell_{\infty}^n$ space, where we prove that for every hyperplane with a unique minimal projection, this projection is automatically $2$-strongly unique. In the real setting, the $1$-strong uniqueness follows immediately from Theorem \ref{twrsudolskiwojcik}, as in this case, a space of linear operators $\mathcal{L}^* (\ell_{\infty}^n, Y)$ is easily seen to be a real polytope space. Furthermore, it is known that in the case of the $1$-complemented hyperplanes a unique minimal projection is $1$-strongly unique also in the complex setting (see Theorem $2.3.1$ (a) in \cite{lewickihabilitacja}). On the other hand, as we shall see, for the non $1$-complemented hyperplanes, the $2$-strong uniqueness can be never improved in the complex case. We recall that conditions characterizing uniqueness of minimal projection onto a hyperplane in $\ell_{\infty}^n$ are known. If $Y \subseteq \ell_{\infty}^n$ is a hyperplane given as $Y=\ker f$, where $f=(f_1, \ldots, f_n) \in \mathbb{C}^n$ with $\|f\|_1=1$, a minimal projection onto $Y$ has norm greater than $1$ and is unique if and only if $0 < |f_j| < \frac{1}{2}$ for every $1 \leq j \leq n$ (see Theorem II.3.6 (b) in \cite{odynieclewicki} and Section 2 in \cite{skrzypekoperator}). Therefore, only such a situation is considered below.

\begin{twr}
\label{twrlinfty}
Let $Y \subseteq \ell_{\infty}^n$ be a linear subspace given as $Y=\ker f$, where a vector $f=(f_1, \ldots, f_n) \in \mathbb{C}^n$ satisfies conditions: $\|f\|_1=1$ and $0 < |f_j| < \frac{1}{2}$ for every $1 \leq j \leq n$. Then the minimal projection from $\ell_{\infty}^n$ onto $Y$ is $2$-strongly unique, but it is not $\alpha$-strongly unique for any $\alpha<2$.
\end{twr}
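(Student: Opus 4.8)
The plan is to treat both assertions through the best-approximation reformulation of minimality in the operator space, reducing first to the case of a real defining functional. Since multiplying the $j$-th coordinate by $\overline{f_j}/|f_j|$ is an isometry of the complex $\ell_\infty^n$, I may assume $f_j = |f_j| > 0$ for every $j$, so that $Y = \ker f$ is a real subspace with $\sum_j f_j = 1$ and $0 < f_j < \tfrac12$. Every projection of $\ell_\infty^n$ onto $Y$ has the form $P_u(x) = x - f(x)u$ with $f(u) = 1$, and maximizing over the extreme (unimodular) points of $B_{\ell_\infty^n}$ gives the operator-norm formula
\[
\|P_u\| = \max_{1 \le k \le n}\Big(|1 - f_k u_k| + (1-f_k)|u_k|\Big).
\]

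First I would locate the minimal projection by minimizing the right-hand side over $\{u : \sum_j f_j u_j = 1\}$. I expect the optimal $u^*$ to be real with all $n$ bracketed terms equal; solving $(1-2f_k)u_k^* = c$ under the constraint yields $u_k^* = c/(1-2f_k)$ and $\lambda := \lambda(Y,\ell_\infty^n) = 1 + c > 1$, where $c = \big(\sum_j f_j/(1-2f_j)\big)^{-1}$. The maximum in each coordinate $k$ is attained at the unimodular vector $x^{(k)}$ with $x^{(k)}_k = 1$ and $x^{(k)}_j = -1$ for $j \neq k$, and the associated norming functional is the coordinate evaluation $g_k = e_k^*|_Y \in S_{Y^*}$ (its norm on $Y$ equals $1$ since $f_k \le 1 - f_k$). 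To invoke Lemma \ref{lemproj} I would then build the Chalmers--Metcalf operator $T = \sum_k \alpha_k\, x^{(k)} \otimes g_k$: imposing $T(Y) \subseteq Y$ forces $(\alpha_k(2f_k-1))_k$ to be proportional to $f$, i.e. $\alpha_k = c f_k/(1-2f_k)$. The decisive point is that $1 - 2f_k > 0$ for every $k$, so all weights $\alpha_k$ are strictly positive and every coordinate is active. Since $\bigcap_k \ker(e_k^*|_Y) = \{y \in Y : y_k = 0 \text{ for all } k\} = \{0\}$, no nonzero $y_0 \in Y$ is annihilated by all the $g_k$, and Lemma \ref{lemproj} gives that the minimal projection is $2$-strongly unique.

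For the negative assertion I would perturb $u^*$ in a purely imaginary direction. Fixing a nonzero real vector $v \in Y$ (which exists, as $\ker f$ has a real basis and $n \geq 2$) and setting $Q_\epsilon = P_{u^* + i\epsilon v}$ for small $\epsilon > 0$, the difference $Q_\epsilon - P$ is the operator $x \mapsto i\epsilon f(x)v$, whose norm is exactly $\epsilon\|v\|_\infty$, hence linear in $\epsilon$. On the other hand, substituting $u_k = u_k^* + i\epsilon v_k$ into the norm formula and expanding each term --- using $1 - f_k u_k^* > 0$ and $u_k^* > 0$ --- the square-root contributions are flat to first order, so $\|Q_\epsilon\| = \lambda + O(\epsilon^2)$. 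Consequently $\|Q_\epsilon\|^\alpha - \lambda^\alpha = O(\epsilon^2)$ while $\|Q_\epsilon - P\|^\alpha = (\epsilon\|v\|_\infty)^\alpha$, so for any $\alpha < 2$ the quotient behaves like $\epsilon^{2-\alpha} \to 0$, ruling out $\alpha$-strong uniqueness.

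The main obstacle is the explicit determination of the minimal projection together with the verification that all Chalmers--Metcalf weights are strictly positive: this is exactly the step where the hypothesis $|f_j| < \tfrac12$ is indispensable, since it guarantees both $\lambda > 1$ and $\alpha_k > 0$ for every $k$, hence that the norming functionals span $Y^*$. The quadratic-versus-linear growth comparison in the last paragraph is routine once the norm formula is available, but checking that the first-order term genuinely vanishes under the imaginary perturbation is what pins the sharp exponent at $2$.
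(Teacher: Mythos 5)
Your proposal is correct and follows essentially the same route as the paper: reduction to real positive $f_j$ by a diagonal isometry, the norm formula $\|P_u\| = \max_k \left( |1-f_ku_k| + (1-f_k)|u_k| \right)$, $2$-strong uniqueness via a Chalmers--Metcalf operator with strictly positive weights combined with Lemma \ref{lemproj}, and the purely imaginary perturbation $u^* + i\epsilon v$ with $\|Q_\epsilon - P\| = \epsilon\|v\|_{\infty}$ but $\|Q_\epsilon\| = \lambda + O(\epsilon^2)$ to rule out $\alpha$-strong uniqueness for $\alpha<2$. The only substantive difference is that where the paper cites \cite{skrzypekoperator} for the explicit minimal projection and the positivity of the Chalmers--Metcalf weights, you derive $u_k^* = c/(1-2f_k)$ and $\alpha_k = cf_k/(1-2f_k) > 0$ directly (making the argument self-contained, which is a point in its favor since that reference formally treats the real case), and your uniform $O(\epsilon^2)$ expansion of each bracket replaces the paper's pigeonhole-plus-L'Hospital computation of the same limit.
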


\begin{proof}
 We begin with observing that since a mapping of the form
$$\ell_{\infty}^n \ni (x_1, x_2, \ldots, x_n) \to (a_1x_1, a_2x_2, \ldots, a_nx_n) \in \ell_{\infty}^n$$
is an isometry for any complex scalars $a_i$ of modulus $1$, we can suppose throughout the proof that $f_j$ is a real positive number for any $1 \leq j \leq n$.

Let $P: \ell_{\infty}^n \to Y$ be a minimal projection. Because $(\ell_{\infty}^n)^*=\ell^n_1$, a Chalmers-Metcalf operator $T: \ell_{\infty}^n \to \ell_{\infty}^n$ for $Y$ can be written in a form
$$T(z) = \alpha_1 e_1^*(z) x_1 + \alpha_2 e_2^*(z) x_2 \ldots + \alpha_n e_n^*(z) x_n,$$
where $e_j^*$ is a linear functional given as $e_j^*(z)=z_j$ for $z \in \ell_{\infty}^n$ and $x_j \in S_{\ell_{\infty}^n}$. Moreover, it was proved in \cite{skrzypekoperator} (Theorem $2$) that in our situation we have $\alpha_j>0$ for every $1 \leq j \leq n$ (it should be noted that in \cite{skrzypekoperator} only the real case was formally considered, but the proof of Theorem 2 presented there works equally well also in the complex case). Clearly, if $z \in \ell_{\infty}^n$ is a vector satisfying $e_i^*(z)=0$ for every $1 \leq j \leq n$, then $z=0$. Thus, the $2$-strong uniqueness of $P$ follows now immediately from Lemma \ref{lemproj}. 

Now we shall prove that $P$ is not $\alpha$-strongly unique for any $\alpha < 2$. Let us assume otherwise. Then, there exists a constant $r>0$, such that for any projection $Q: \ell_{\infty}^n \to Y$ we have
\begin{equation}
\label{stala}
\|Q\|^{\alpha} \geq \|P\|^{\alpha} + r\|Q-P\|^{\alpha}.
\end{equation}
A general projection $Q:\ell_{\infty}^n \to Y$ is of the form $Q(z)=z-f(z)v$, where $v \in \ell_{\infty}^n$ is a vector such that $f(v)=1$. For any vector $z \in \mathbb{C}^n$ with $\|z\|_{\infty} \leq 1$ we have
$$|Q(z)_j| = |z_j-f(z)v_j| = \left | -\left ( \sum_{k \neq j} f_kz_k \right )v_i + (1-f_jv_j)z_j  \right |$$
$$\leq \sum_{k \neq j} f_k|v_j| + |1-f_jv_j| = (1-f_j)|v_j| + |1-f_jv_j|.$$
Moreover, the equality can be attained for a suitable $z \in \mathbb{C}^n$ satisfying $\|z\|_{\infty}=1$. Hence
\begin{equation}
\label{norma}
\|Q\| = \max_{1 \leq j \leq n} \left ( (1-f_j)|v_j| + |1-f_jv_j| \right ).
\end{equation}
Let us now suppose that the minimal projection $P$ is given as $P(z)=z-f(z)w$, where $w \in \mathbb{C}^n$ satisfies $f(w)=1$. It is well-known (see for example Theorem $4$ in \cite{skrzypekoperator}) that $\|P\| = 1 + \lambda,$ where 
$$\lambda =\left ( \sum_{j=1}^{n} \frac{f_j}{1-2f_j} \right )^{-1}$$
and the unique vector $w$ is given as $w_j=\frac{\lambda}{1-2f_j}$ for $1 \leq j \leq n$. In particular, $w_j$ is a positive real number. Moreover, the following property will be crucial for us: the maximum in (\ref{norma}) giving the norm of the projection $P$ is attained for every coordinate $1 \leq j \leq n$, that is
$$\|P\| = 1 + \lambda = (1-f_j)|w_j| + |1-f_jw_j|$$
for every $1 \leq j \leq n$. This can be verified simply by a direct calculation, but it is also a consequence of the fact that we have $\alpha_j>0$ for every $1 \leq j \leq n$ in the Chalmers-Metcalf operator $T$ for $Y$ and hence every functional $e_j^*$ belongs to some norming pair for $P$.

To prove that the condition (\ref{stala}) can not hold we shall consider certain specific projections $Q$. For a fixed real number $t>0$ let us consider a projection $Q: X \to Y$ given as $Q(z)=z-f(z)v$, where $v=w+tiu$ and $u \in Y$ is any vector with the real coordinates satisfying $\|u\|_{\infty}=1$. We clearly have $f(v)=1$ and moreover
$$\|Q(z)-P(z)\|_{\infty}=\|tf(z)u\|_{\infty}=t|f(z)| \|u\|_{\infty} \leq t \|z\|_{\infty}.$$
Because the equality holds for $z \in \ell_{\infty}^n$ such that $\|z\|_{\infty}=f(z)=1$ it follows that $\|Q-P\| = t$. To estimate the norm of $Q$, we shall use the equation (\ref{norma}). For $1 \leq j \leq n$ we have
$$(1-f_j)|v_j| + |1-f_jv_j| = (1-f_j)|w_j + tiu_j| + |1-f_jw_j - tf_j i u_j|$$
$$=(1-f_j)\sqrt{w_j^2 + t^2u^2_j} + \sqrt{(1-f_jw_j)^2 + t^2f_j^2u_j^2} \leq (1-f_j)\sqrt{w_j^2 + t^2} + \sqrt{(1-f_jw_j)^2 + t^2},$$
where the last inequality follows from the fact that $|f_j|, |u_j| \leq 1$ for every $1 \leq j \leq n$. Hence, by the condition \eqref{norma} we can estimate
$$\|Q\| \leq \max_{1 \leq j \leq n} \left ( (1-f_j)\sqrt{w_j^2 + t^2} + \sqrt{(1-f_jw_j)^2 + t^2} \right ).$$
Combining this with the condition (\ref{stala}) for an $\alpha$-strong uniqueness we get an inequality
$$\max_{1 \leq j \leq n} \left ( (1-f_j)\sqrt{w_j^2 + t^2} + \sqrt{(1-f_jw_j)^2 + t^2} \right )^{\alpha} \geq \|P\|^{\alpha} + rt^{\alpha}$$
for every $t > 0$. If we consider $t$ of the form $t=\frac{1}{k}$ for $k=1, 2, 3, \ldots$ then the maximum on the left side inequality has to be attained for a certain index $1 \leq j \leq n$ infinitely many times. To obtain a contradiction, it is therefore enough to prove that for any fixed index $1 \leq j \leq n$ on the left-hand side, this inequality fails for $t>0$ small enough.

So, let us fix $1 \leq j \leq n$. We recall that $\|P\| = (1-f_j)|w_j| + |1-f_jw_j|$. Hence, the inequality above can be rewritten as
$$\frac{\left ((1-f_j)\sqrt{w_j^2 + t^2} + \sqrt{(1-f_jw_j)^2 + t^2}\right )^{\alpha}  - \left ( (1-f_j)|w_j| + |1-f_jw_j| \right )^{\alpha}}{t^{\alpha}} \geq r.$$
However, calculating the limit of the left-hand side for $t \to 0^{+}$ with a help of the L'Hospital's rule we get
$$\lim_{t \to 0^+} \frac{\left ((1-f_j)\sqrt{w_j^2 + t^2} + \sqrt{(1-f_jw_j)^2 + t^2}\right )^{\alpha}  - \left ( (1-f_j)|w_j| + |1-f_jw_j| \right )^{\alpha}}{t^{\alpha}}$$
$$=\lim_{t \to 0^+}\left (  \frac{(1-f_j)t}{\sqrt{w^2_j+t^2}} + \frac{t}{\sqrt{(1-f_jw_j)^2 + t^2}} \right ) \cdot \frac{\left ((1-f_j)\sqrt{w_j^2 + t^2} + \sqrt{(1-f_jw_j)^2 + t^2}\right )^{\alpha-1}}{t^{\alpha-1}} $$
$$=\lim_{t \to 0^+} t^{2 - \alpha} \left (  \frac{1-f_j}{\sqrt{w^2_j+t^2}} + \frac{1}{\sqrt{(1-f_jw_j)^2 + t^2}} \right ) \left ((1-f_j)\sqrt{w_j^2 + t^2} + \sqrt{(1-f_jw_j)^2 + t^2}\right )^{\alpha-1}$$
$$=\lim_{t \to 0^+} t^{2 - \alpha} \left ( \frac{1-f_j}{|w_j|} + \frac{1}{|1-f_jw_j|} \right ) \left ((1-f_j)|w_j| + |1-f_jw_j|\right )^{\alpha-1}=0,$$
where in the last step we used the fact that $\alpha < 2$ (we note that $w_j>0, 1-f_jw_j>0$). We have obtained a contradiction and the proof is finished.

\end{proof}

Our next result is directly related to results established in Section \ref{sectionreal}. In Theorem \ref{twrpolytope} it was proved that in the complex polytope norms and their adjoints, a unique best approximation is automatically $2$-strongly unique, when real subspaces are considered and the ambient norm is determined by real vectors or functionals. In theorem below we establish a similar fact but for unique minimal projections. It should be noted that in this case Theorem \ref{twrpolytope} can not be applied directly as, contrary to the real case, we can not easily deduce that the dual space of linear operators $\mathcal{L}^*(X, X)$ is a complex polytope space or an adjoint complex polytope space, even when $X$ is such a space. This is because $X$ and $X^*$ can not be simultaneously complex polytope or adjoint complex polytope spaces, as shown in Theorem \ref{twrnorms}. However, as we shall see, Theorem \ref{twrpolytope} can still be applied, albeit in a less straightforward way.

\begin{twr}
\label{twrprojreal}
Let $X=(\mathbb{C}^n, \| \cdot \|)$ be a complex normed space with a norm that is either a complex polytope norm with a real essential system of vertices or an adjoint complex polytope norm with a real essential system of facets. Suppose that $Y \subseteq X$ is a real subspace, for which a minimal projection $P: X \to Y$ is unique. Then $P$ is a real projection (for every $v \in \mathbb{R}^n$ we have $P(v) \in \mathbb{R}^n$) and $P$ is a $2$-strongly unique minimal projection. 
\end{twr}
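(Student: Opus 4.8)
The plan is to transfer the problem to the space of operators and then to reduce it, in both cases simultaneously, to the complex polytope case of Theorem \ref{twrpolytope}. Recall that $P$ is an ($\alpha$-strongly unique) minimal projection exactly when $0$ is an ($\alpha$-strongly unique) best approximation for $P$ in $\mathcal{L}_Y(X,Y)$, so the uniqueness hypothesis says precisely that $0$ is the unique best approximation for $P$. First I would record that $P$ is real. Complex conjugation $C(v)=\bar v$ is an isometry of $X$ in both cases, since a real essential system of vertices gives $\overline{B_X}=B_X$, while a real essential system of facets gives $|f_j(\bar v)|=|f_j(v)|$. As $Y$ is a real subspace, $C(Y)=Y$, so $\bar P:=CPC$ is again a projection onto $Y$ with $\|\bar P\|=\|P\|$, hence minimal; uniqueness forces $\bar P=P$, i.e. $P$ is real.

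Then comes the structural heart: identifying the operator norm. When $B_X=\absconv\{u_1,\dots,u_N\}$ with real $u_j$, every $v\in B_X$ is an absolutely convex combination of the $u_j$, so for $A\in\mathcal{L}(X,Y)$ one gets $\|A\|=\max_{1\le j\le N}\|Au_j\|$, and the map $A\mapsto(Au_1,\dots,Au_N)$ embeds $\mathcal{L}(X,Y)$ isometrically into the $\ell_\infty$-sum $\ell_\infty^N(W)$ with $W=X$. When instead $\|x\|=\max_k|f_k(x)|$ with real $f_k$, a short interchange of suprema gives $\|A\|=\max_k\|f_k\circ A\|_{X^*}$, and $A\mapsto(f_1\circ A,\dots,f_N\circ A)$ embeds $\mathcal{L}(X,Y)$ isometrically into $\ell_\infty^N(W)$ with $W=X^*$. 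In both cases $W$ is a complex polytope space with a real essential system of vertices (the $u_j$, resp. the $f_k$), real operators correspond to tuples of real vectors, $\mathcal{L}_Y(X,Y)$ becomes a real subspace, and by the first paragraph the image of $P$ is a real element $x$. Applying Lemma \ref{lemreal} in each block immediately yields the operator analogue $\|A+iB\|\ge\|A\|$ for real operators $A,B$.

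It then remains to prove a self-contained extension of the complex polytope case of Theorem \ref{twrpolytope}: if $x$ is a real element of $V=\ell_\infty^N(W)$ (with $W$ complex polytope, real vertices) and $0$ is the unique best approximation for $x$ in a real subspace $\mathcal{Y}\subseteq V$, then $0$ is $2$-strongly unique. Here I would follow the proof of Theorem \ref{twrpolytope} wherever possible: by Lemma \ref{lemcompact} it suffices to treat $\|y\|\le 1$; the real part of $V$ is the real polytope space $\ell_\infty^N(W_\mathbb{R})$, so Theorem \ref{twrsudolskiwojcik} furnishes a constant $r_1$ with $\|x-\re(y)\|\ge 1+r_1\|\re(y)\|$, and the operator Lemma \ref{lemreal} gives $\|x-y\|\ge\|x-\re(y)\|$. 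Splitting on whether $\|\re(y)\|\ge C\|y\|^2$ disposes of the ``large real part'' case exactly as in Theorem \ref{twrpolytope}, and in the complementary case $\|\im(y)\|\ge(1-C)\|y\|$, so it is enough to bound $\|x-i\im(y)\|$ below by $1+c\|y\|^2$.

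The main obstacle is precisely this last bound, where the $\ell_\infty$-sum (maximum over blocks) structure, absent in Theorem \ref{twrpolytope}, must be confronted: the block carrying most of $\im(y)$ need not be a block in which $\|x_k\|=\|x\|$. The point is that uniqueness rules this out quantitatively. Let $K=\{k:\|x_k\|=\|x\|\}$. If some nonzero real $\tilde y\in\mathcal{Y}$ had $\tilde y_k=0$ for all $k\in K$, then $\|x-it\tilde y\|=\|x\|$ for all small $t>0$, contradicting uniqueness since $it\tilde y\in\mathcal{Y}$; hence $\tilde y\mapsto\max_{k\in K}\|\tilde y_k\|$ is a positive continuous function on the compact real unit sphere of $\mathcal{Y}$, giving $\max_{k\in K}\|\tilde y_k\|\ge c_0\|\tilde y\|$ for every real $\tilde y\in\mathcal{Y}$. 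Choosing $k\in K$ with $\|\im(y)_k\|\ge c_0\|\im(y)\|$ and applying inside that single complex polytope block the representation estimate $\sqrt{a^2+b^2}\ge|a|+\tfrac18 b^2$ together with the Cauchy--Schwarz inequality (as in Theorem \ref{twrpolytope}) yields $\|x_k-i\im(y)_k\|\ge 1+c\|\im(y)_k\|^2\ge 1+cc_0^2\|\im(y)\|^2$, whence $\|x-i\im(y)\|\ge 1+c'\|y\|^2$. Combining the cases gives the desired $2$-strong uniqueness, which transfers back through the isometric embedding to $P$.
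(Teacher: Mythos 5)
Your proposal is correct, but it takes a genuinely different route from the paper's proof. You keep the actual operator norm throughout, embedding $\mathcal{L}(X,Y)$ isometrically into an $\ell_\infty$-sum of copies of a complex polytope space $W$ with real vertices ($W=X$ via $A\mapsto(Au_1,\dots,Au_N)$ in the vertex case, $W=X^*$ via $A\mapsto(f_1\circ A,\dots,f_M\circ A)$ in the facet case), so the uniqueness hypothesis transfers for free through the isometry; the price is that such an $\ell_\infty$-sum is not covered by Theorem \ref{twrpolytope} (it is neither a complex polytope nor an adjoint complex polytope norm), so you must re-prove the complex polytope half of that theorem in this mixed setting. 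The paper goes the other way: it replaces the operator norm by the discretized semi-norm $\|L\|_0=\max_{s,t}|f_s(L(u_t))|$, which turns $\mathcal{V}=\lin(\mathcal{L}_Y(X,Y)\cup\{P\})$ into an adjoint complex polytope space with a real essential system of facets so that Theorem \ref{twrpolytope} applies verbatim, but it must then pay for the change of norm by proving three claims, the delicate one being that uniqueness of $0$ as a best approximation for $P$ survives the passage from $\|\cdot\|$ to the smaller norm $\|\cdot\|_0$ (handled by splitting $L$ into real and imaginary real-operator parts). Your new ingredient, on which the whole argument hinges, is the compactness lemma: uniqueness forbids any nonzero real $\tilde y\in\mathcal{Y}$ from vanishing on all norming blocks $K$ of $x$ (otherwise $it\tilde y\in\mathcal{Y}$ would satisfy $\|x-it\tilde y\|=\|x\|$ for small $t>0$), whence $\max_{k\in K}\|\tilde y_k\|\ge c_0\|\tilde y\|$ on the compact real unit sphere of $\mathcal{Y}$; this correctly resolves the one genuine obstruction absent from Theorem \ref{twrpolytope} --- that the block carrying most of $\im(y)$ need not be a norming block --- and the single-block estimate $\sqrt{a^2+b^2}\ge|a|+\tfrac18 b^2$ with Cauchy--Schwarz then runs exactly as in the paper (the coefficient bound $|a_j|,|b_j|\le 2$ holds since each block norm of $x-i\im(y)$ is at most $2$). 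Your conjugation argument for the realness of $P$ (since $C(v)=\bar v$ is an isometry in both settings and $C(Y)=Y$, the map $\bar P=CPC$ is a minimal projection onto $Y$, so $\bar P=P$ by uniqueness) is also cleaner than the paper's comparison of $P$ with the real-part projection $\widetilde P$ via Lemma \ref{lemreal}. In short: the paper's route maximizes reuse of Theorem \ref{twrpolytope} at the cost of a nontrivial uniqueness transfer to a weaker norm, while yours avoids any change of norm at the cost of a bespoke, but self-contained and correct, extension of that theorem to $\ell_\infty$-sums of complex polytope blocks.
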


\begin{proof}
Throughout the proof we shall always understand a linear operator $L: \mathbb{C}^{n_1} \to \mathbb{C}^{n_2}$ to be \emph{real} if it transforms real vectors to real vectors, i.e. $L(\mathbb{R}^{n_1}) \subseteq \mathbb{R}^{n_2}$. We note that if $X$ has a real essential system of vertices $u_1, \ldots, u_N \in \mathbb{R}^n$, then the convex hull $\conv\{ \pm u_1, \ldots, \pm u_N \} \subseteq \mathbb{R}^n$ is a unit ball of a polytope norm in $\mathbb{R}^n$. In this case, the dual norm in $\mathbb{R}^n$ is also a polytope norm and we can assume that real functionals $\pm f_1, \ldots, \pm f_M$ are extreme points of the unit ball of the dual norm. Similarly, if $X$ has a real essential system of facets $f_1, \ldots, f_M$, then we define vectors $u_1, \ldots, u_N \in \mathbb{R}^n$ to be extreme points (along with their negatives) of the set $\{ x \in \mathbb{R}^n: |f_j(x)| \leq 1 \text{ for every } 1 \leq j \leq M\}$. The proof will be conducted mostly simultaneously in both situations.

Let us suppose that $\dim Y = n-d$ with $1 \leq d \leq n-1$ and $Y = \bigcap_{j=1}^{d} \ker g_j$, where $g_1, \ldots, g_d: \mathbb{C}^n \to \mathbb{C}$ are real functionals (since $Y$ is a real subspace such a choice is possible by Lemma \ref{realcond}). The projection $P$ can be written in the form
$$P(x) = x - \sum_{j=1}^{d} g_j(x) w_j,$$
for some vectors $w_1, \ldots, w_d \in \mathbb{C}^n$ such that $g_j(w_k) = \delta_{j, k}$ for any $1 \leq j, k \leq d$. ($\delta_{j, k}$ denotes the Kronecker delta). Let $\widetilde{w_j}=\re(w_j)$ for $1 \leq j \leq d$. Clearly, as $g_j$'s are real functionals we also have $g_j(\widetilde{w_k})=\delta_{j, k}$ for any $1 \leq j, k \leq d$ and a linear operator $\widetilde{P}:X \to Y$ given as
$$\widetilde{P}(x) = x - \sum_{j=1}^{d} g_j(x) \widetilde{w}_j$$
is a projection onto $Y$. Now, if $X$ has a real essential system of vertices $u_1, \ldots, u_N$ then $\|P\| = \max_{1 \leq k \leq N} \|P(u_k)\|$ and similarly $\|\widetilde{P}\| = \max_{1 \leq k \leq N} \|\widetilde{P}(u_k)\|$. However, by Lemma \ref{lemreal} for any fixed $1 \leq k \leq N$ we have
$$\|P(u_k)\| = \left \| u_k - \sum_{j=1}^{d} g_j(u_k) w_j \right \| = \left \| u_k - \sum_{j=1}^{d} g_j(u_k) \re(w_j) - i \sum_{k=1}^{d} g_j(u_k) \im(w_j) \right \|$$
$$\geq \left \| u_k - \sum_{j=1}^{d} g_j(u_k) \re(w_j) \right \| = \|\widetilde{P}(u_k)\|.$$
It follows that $\|\widetilde{P}\| \leq \|P\|$ and since by the assumption $P$ is a unique minimal projection, we must have $\widetilde{P}=P$. In particular, for every $1 \leq j \leq d$ we have $\widetilde{w_j}=w_j$, i.e. $w_j \in \mathbb{R}^n$ and $P$ is a real projection.

In the case of $X$ having a real essential system of facets $f_1, \ldots, f_M$ we use the fact that $\|P\| = \max_{1 \leq k \leq M} \|f_k \circ P\|_*$ and similarly $\|\widetilde{P}\| = \max_{1 \leq k \leq M} \|f_k \circ \widetilde{P}\|_*$, where $\| \cdot \|_*$ denotes the dual norm to $\| \cdot \|$ in $\mathbb{C}^n$. Then for any fixed $1 \leq k \leq M$ we have
$$ (f_k \circ P)(x) =  f_k(x) -  \sum_{j=1}^{d} g_j(x) f_k(w_j). $$
Since this is a real functional, reasoning like before but this time with Lemma \ref{lemreal} applied to the norm $\| \cdot \|_*$ (which has a real essential system of vertices) we conclude that $\|f_k \circ P \|_* \geq \|f_k \circ \widetilde{P}\|$ for every $1 \leq k \leq M$ and hence $\|P\| \geq \|\widetilde{P}\|$. Therefore, again we conclude that for every $1 \leq j \leq d$ we have $w_j \in \mathbb{R}^n$ and, in consequence, $P$ is a real projection.

The fact that $P$ is a unique minimal projection from $X$ onto $Y$ means that $0$ is a unique best approximation for $P$ in the subspace $\mathcal{L}_Y(X, Y)$. Let a subspace $\mathcal{V} \subseteq \mathcal{L}(X, Y)$ be defined as $\mathcal{V} = \lin \{ \mathcal{L}_Y(X, Y) \cup P \}$. For an operator $L \in \mathcal{V}$ we define $\|L\|_0$ as
$$\|L\|_0 = \max_{1 \leq j \leq N, 1 \leq k \leq M} |f_k(L(u_j))|,$$
regardless of whether $X$ has a real essential system of vertices $u_1, \ldots, u_N$ or a real essential system of facets $f_1, \ldots, f_M$ (as explained in the beginning of the proof). Clearly, $\| \cdot \|_0$ is a semi-norm on $\mathcal{V}$ and $\|L\|_0 \leq \|L\|$ for every $L \in \mathcal{V}$, as $u_1, \ldots, u_N \in S_{X}$ and $f_1, \ldots, f_M \in S_{X^*}$. We shall establish the following three claims.

\begin{enumerate}
\item If $L \in \mathcal{V}$ is a real operator, then $\|L\|_0 = \|L\|.$
\item If $L \in \mathcal{V}$ and $\|L\|_0=0$, then $L \equiv 0$.
\item If $L \in \mathcal{L}_Y(X, Y)$ and $\|P-L\|_0 \leq \|P\|_0$, then $L \equiv 0$.
\end{enumerate}

First we will demonstrate how to complete the proof with the above three claims already established. Second claim implies immediately that $\| \cdot \|_0$ is a norm on $\mathcal{V}$. We note that $\dim \mathcal{L}_Y(X, Y) = d(n-d)$ with a basis given by rank one operators $g_k \otimes y_j$, where $1 \leq j \leq n-d$, $1 \leq k \leq d$ and $y_1, \ldots, y_{n-d}$ is any basis of $Y$. We can further assume that $y_1, \ldots, y_{n-d} \in \mathbb{R}^n$, as $Y$ is a real subspace. Thus $\dim \mathcal{V} = d(n-d)+1$ with a basis consisting of the aforementioned operators and the projection $P$. Hence, if a linear operator $L \in \mathcal{V}$ is of the form $L=aP + \sum_{k, j} a_{k, j} g_k \otimes y_j$, then 
$$f_s(L(u_t))=af_s(P(u_t)) + \sum _{k, j} a_{k, j} g_k(u_t) f_s(y_j).$$
Since $u_j$'s and $y_j$'s are all real vectors and $P, g_j, f_j$ are all real operators, when $\mathcal{V}$ is identified with $\mathbb{C}^{d(n-d)+1}$, the functional $L \to f_s(L(u_t))$ is also real (if $a, a_{k, j} \in \mathbb{R}$, then $f_s(L(u_t)) \in \mathbb{R}$). It follows that, when $\mathcal{V}$ is identified with $\mathbb{C}^{d(n-d)+1}$, the norm $\| \cdot \|_0$ is an adjoint complex polytope norm with a real essential system of facets, since it is a maximum over a finite number of real functionals. The third claim implies now that $0$ is a unique best approximation for $P$ in $\mathcal{L}_Y(X, Y)$ when the norm $\| \cdot \|_0$ is considered and hence it follows from Theorem \ref{twrpolytope} that $0$ is a $2$-strongly unique best approximation for $P$ in $\| \cdot \|_0$. Hence, there exists a constant $r>0$ such that for any $L \in \mathcal{L}_Y(X, Y)$ we have
$$\|P-L\|_0^2 \geq \|P\|_0^2 + r \|L\|_0^2.$$
Moreover, $\|P\|_0 = \|P\|$ from the first claim and there exists a constant $c>0$ such that $\|L\|_0 \geq c \|L\|$, as these two norms are equivalent on a finite-dimensional subspace $\mathcal{V}$ (with the obvious estimate in the other direction being $\|L\|_0 \leq \|L\|)$. Altogether we have
$$\|P-L\|^2 \geq \|P-L\|_0^2 \geq \|P\|_0^2 + r \|L\|_0^2 \geq \|P\|^2 + cr \|L\|^2.$$
Now, if $Q$ is any linear projection from $X$ onto $Y$, then using the inequality above for $L = P-Q \in \mathcal{L}_Y(X, Y)$ we get
$$\|Q\|^2 \geq \|P\|^2 + cr \|P-Q\|^2,$$
which proves that $P$ is a $2$-strongly unique minimal projection. Thus, the desired conclusion will follow when the three claims above are established.

Let us start with the first claim. If $X$ has a real essential system of vertices $u_1, \ldots, u_N$, then for every linear operator $L \in \mathcal{V}$ we have $\|L\| = \max_{1 \leq j \leq N} \|L(u_j)\|$. Therefore, if $L$ is a real operator then the vectors $L(u_j)$ are also real and for every real vector $v \in \mathbb{R}^n$ we clearly have $\|v\| = \max_{1 \leq j \leq M} |f_j(v)|$ (as $f_1, \ldots, f_M \in \mathbb{R}^n$ are defined as extreme points of the dual norm in this case). Thus $\|L\|=\|L\|_0$ in this situation. Similarly, if $X$ has a real essential system of facets $f_1, \ldots, f_M$, then $\|L\| = \max_{1 \leq j \leq M} \|f_j \circ L\|_*$. If $L$ is a real operator then the functionals $f_j \circ L$ are also real and for every real functional $f: \mathbb{C}^n \to \mathbb{C}$ we have $\|f\|_* = \max_{1 \leq j \leq N} |f(u_j)|$. Again it follows that $\|L\|_0=\|L\|$.

For the second claim, let us suppose that $L \in \mathcal{V}$ is such that $\|L\|_0=0$ and let us write $L$ in the form $L=aP + \sum_{k, j} a_{k, j} g_k \otimes y_j$ (where again $y_1, \ldots, y_{n-d}$ is a fixed real basis of $Y$). Let us also write $a= b + ci$ and $a_{k, j} = b_{k, j} + c_{k, j}i$. Then for $1 \leq s \leq M, 1 \leq t \leq N$ we have
$$0 = f_s(L(u_t))=af_s(P(u_t)) + \sum _{k, j} a_{k, j} g_k(u_t) f_s(y_j)$$
$$=\left (bf_s(P(u_t)) + \sum _{k, j} b_{k, j} g_k(u_t) f_s(y_j) \right ) + i \left ( cf_s(P(u_t)) + \sum _{k, j} c_{k, j} g_k(u_t) f_s(y_j) \right ) .$$
By comparing real and imaginary parts it follows that
$$bf_s(P(u_t)) + \sum _{k, j} b_{k, j} g_k(u_t) f_s(y_j)  = cf_s(P(u_t)) + \sum _{k, j} c_{k, j} g_k(u_t) f_s(y_j) = 0.$$
Thus, for real operators $L_1, L_2 \in \mathcal{V}$ defined as $L_1 = bP + \sum_{k, j} b_{k, j} g_k \otimes y_j $ and $L_2=cP + \sum_{k, j} c_{k, j} g_k \otimes y_j$ we have $\|L_1\|_0=\|L_2\|_0=0$. However, since $L_1, L_2$ are real operators it follows that $\|L_1\|=\|L_2\|=0$ by the first claim. Hence $L_1, L_2 \equiv 0$ and also $L \equiv 0$. This proves the second claim.

For the third claim, let us take a linear operator $L \in \mathcal{L}_Y(X, Y)$ such that $\|P-L\|_0 \leq \|P\|_0=\|P\|$ and again let us write it in the form $L=\sum_{k, j} a_{k, j} g_k \otimes y_j$ with $a_{k, j} = b_{k, j} + i c_{k, j}$. Arguing as in the previous step we see that $\|P-L_1\|_0 \leq \|P-L\|_0$, where $L_1 = \sum_{k, j} b_{k, j} g_k \otimes y_j$. However, $L_1$ is a real operator and thus $\|P-L_1\|=\|P-L_1\|_0$. Hence $\|P-L_1\| \leq \|P\|$ and since $P$ is a unique minimal projection onto $Y$, it follows that $L_1 \equiv 0$. Therefore $\|P-L\|_0=\|P-iL_2\|_0$. However, for any $1 \leq s \leq M, 1 \leq t \leq N$ we have
$$|f_s((P-iL_2)(u_t))| = \left | f_s(P(u_t)) - i \sum_{k, j} c_{k, j} g_k(u_t) f_s(y_j) \right | \geq |f_s(P(u_t))|$$
with the equality if and only if $\sum_{k, j} c_{k, j} g_k(u_t) f_s(y_j)=0$. Since this has to hold for all $1 \leq s \leq M, 1 \leq t \leq N$ it follows that $\|L_2\|_0=0$ and therefore $L_2 \equiv 0$. This proves that $0$ is a unique best approximation for $P$ in $\mathcal{L}_Y(X, Y)$ when the norm $\| \cdot \|_0$ is considered and the proof is finished.

\end{proof}

We note that in the previous result some assumption about the ambient norm is necessary, as the Example \ref{example1dim} shows. A $1$-dimensional subspace given in this example is a real subspace of $\mathbb{C}^n$, yet a unique minimal projection is not $2$-strongly unique in this case.

It is easy to observe that Theorem \ref{twrprojreal} gives an alternative proof of the $2$-strong uniqueness of minimal projections onto hyperplanes of $\ell_{\infty}^n$, as in Theorem \ref{twrlinfty}. However, we have preferred to give a proof based on the injectivity of the Chalmers-Metcalf operator, as it was more related to the latter part of the theorem, where we have shown that the $2$-strong uniqueness can not be improved. In the last result of this section, we shortly demonstrate how to use Theorem \ref{twrprojreal} in the case of hyperplane projections of the space $\ell_1^n$. Although the conditions for uniqueness of minimal projections onto hyperplanes of $\ell_1^n$ are known, we do not state them here as they are quite complicated (see Section II.3 in \cite{lewickihabilitacja}).

\begin{twr}
\label{twrl1proj}
Let $Y \subseteq \ell_{1}^n$ be a linear subspace given as $Y=\ker f$ for some non-zero vector $f=(f_1, \ldots, f_n) \in \mathbb{C}^n$. If a minimal projection from $\ell_{1}^n$ onto $Y$ is unique, then it is $2$-strongly unique.
\end{twr}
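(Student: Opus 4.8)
The plan is to reduce the statement to Theorem \ref{twrprojreal}. The space $\ell_1^n$ is a complex polytope space whose unit ball is $\absconv\{e_1, \ldots, e_n\}$, where $e_1, \ldots, e_n$ is the canonical unit basis; this is an essential system of vertices consisting of real vectors, so the hypothesis of Theorem \ref{twrprojreal} on the ambient norm is already satisfied. The one hypothesis that may fail is that $Y = \ker f$ be a real subspace: for a general complex vector $f$ the kernel need not admit a real basis. The heart of the argument is therefore to remove this obstruction by a change of coordinates that preserves all the relevant approximation-theoretic data, and this is the step I expect to carry the conceptual weight.

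First I would exploit the diagonal isometries of $\ell_1^n$. For scalars $a_1, \ldots, a_n \in \mathbb{C}$ with $|a_j| = 1$ the map $D_a(x) = (a_1 x_1, \ldots, a_n x_n)$ is a linear isometry of $\ell_1^n$, and a direct computation shows $D_a(\ker f) = \ker f'$, where $f'_j = \overline{a_j} f_j$. Choosing $a_j = f_j/|f_j|$ when $f_j \neq 0$ and $a_j = 1$ when $f_j = 0$ makes every coordinate $f'_j = |f_j|$ a nonnegative real number. Since $f'$ then has real coordinates, the subspace $Y' := \ker f' = D_a(Y)$ is the set of vectors annihilated by a real functional; identifying this functional with a real vector through the standard inner product, condition (2) of Lemma \ref{realcond} is met, so $Y'$ is a real subspace of $\ell_1^n$. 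This is precisely the rescaling trick already used in the proof of Theorem \ref{twrlinfty}: the realness of the hyperplane is not intrinsic but becomes available only after the correct choice of unimodular factors.

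It then remains to check that the isometry $D_a$ transports minimality, uniqueness, and $2$-strong uniqueness of projections. For a projection $Q$ onto $Y$ the conjugate $D_a Q D_a^{-1}$ is a projection onto $Y'$, and since $D_a$ and $D_a^{-1} = D_{\overline{a}}$ are isometries one has $\|D_a Q D_a^{-1}\| = \|Q\|$ and $\|D_a Q D_a^{-1} - D_a P D_a^{-1}\| = \|Q - P\|$; thus $Q \mapsto D_a Q D_a^{-1}$ is a norm-preserving bijection from projections onto $Y$ to projections onto $Y'$ that also preserves the difference norms. In particular, uniqueness of the minimal projection onto $Y$ forces uniqueness of the minimal projection onto $Y'$. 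Applying Theorem \ref{twrprojreal} to the real subspace $Y' \subseteq \ell_1^n$ shows that the minimal projection onto $Y'$ is $2$-strongly unique, and transporting the defining inequality back through $D_a^{-1}$ yields the $2$-strong uniqueness of the minimal projection onto $Y$. This completes the reduction; I do not anticipate any further difficulty once the realness of $Y'$ has been arranged.
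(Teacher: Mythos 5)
Your proof is correct and takes essentially the same route as the paper: the paper's proof likewise uses the diagonal isometry $(x_1, \ldots, x_n) \mapsto (a_1x_1, \ldots, a_nx_n)$ with $|a_j|=1$ to reduce to the case where $f$ has real coordinates, so that $Y$ becomes a real subspace of the complex polytope space $\ell_1^n$, and then applies Theorem \ref{twrprojreal}. The only difference is that you explicitly verify the routine transport of minimality, uniqueness, and $2$-strong uniqueness through the conjugation $Q \mapsto D_a Q D_a^{-1}$, which the paper's terser formulation ("we can suppose that $f_j$ is a real number") leaves implicit.
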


\begin{proof}
Since a mapping of the form
$$\ell_{1}^n \ni (x_1, x_2, \ldots, x_n) \to (a_1x_1, a_2x_2, \ldots, a_nx_n) \in \ell_{1}^n$$
is an isometry for any complex scalars $a_i$ of modulus $1$, we can suppose that $f_j$ is a real number for any $1 \leq j \leq n$. In this case, $Y$ is a real subspace of a complex polytope space $\ell_1^n$ and the desired conclusion follows immediately from Theorem \ref{twrprojreal}.
\end{proof}

We note that the same argument yields that a uniqueness of minimal projection onto a $1$-dimensional subspace of $\ell_{\infty}^n$ or $\ell_1^n$ is equivalent to a $2$-strong uniqueness. We do not know however, what happens in the case of subspaces of dimension larger than $1$ and smaller than $n-1$.

\end{document}